\documentclass[a4paper,fleqn]{cas-sc}

\usepackage[numbers,sort&compress]{natbib}
\usepackage{amsmath,amsfonts,amssymb}
\usepackage{graphicx}
\usepackage{tikz}
\usetikzlibrary{arrows.meta,calc}
\usepackage{float}
\usepackage{placeins}
\usepackage{booktabs}
\usepackage{algorithm}
\usepackage{algpseudocode}
\usepackage{comment}
\usepackage{array,longtable}

\hypersetup{hypertexnames=false}

\newtheorem{theorem}{Theorem}[section]
\newtheorem{lemma}[theorem]{Lemma}
\newtheorem{proposition}[theorem]{Proposition}
\newtheorem{corollary}[theorem]{Corollary}
\newdefinition{definition}[theorem]{Definition}
\newdefinition{remark}[theorem]{Remark}
\newdefinition{assumption}[theorem]{Assumption}
\newproof{proof}{Proof}

\begin{document}

\let\WriteBookmarks\relax
\def\floatpagepagefraction{1}
\def\textpagefraction{.001}

\shorttitle{Domain-decomposed random-feature evolution for pressure diffusion}
\shortauthors{P. Li et al.}

\title[mode=title]{Domain-decomposed Evolutional Deep Neural Network with
Random Features for Transient Pressure Diffusion with Discontinuous and
High-Contrast Coefficients}

\author[1,2]{Peiqi Li}[orcid=0009-0005-9789-0509]
\ead{Peiqi.Li23@student.xjtlu.edu.cn}
\credit{Methodology, Project administration, Validation, Writing--original draft}

\author[1]{Jie Chen}[orcid=0000-0003-1036-007X]
\cormark[1]
\ead{Jie.Chen01@xjtlu.edu.cn}
\credit{Conceptualization, Supervision, Validation, Writing--review and editing}

\author[1]{Hui Zhang}[orcid=0000-0001-7245-0674]
\ead{Hui.Zhang@xjtlu.edu.cn}
\credit{Methodology, Supervision, Writing--review and editing}

\author[2]{Simon Hands}[orcid=0000-0001-5720-7852]
\ead{Simon.Hands@liverpool.ac.uk}
\credit{Validation, Supervision}

\affiliation[1]{organization={Department of Applied Mathematics,
Xi'an Jiaotong-Liverpool University},
city={Suzhou},
postcode={215123},
country={China}}

\affiliation[2]{organization={Department of Mathematical Sciences,
University of Liverpool},
city={Liverpool},
postcode={L69 3BX},
country={United Kingdom}}

\cortext[cor1]{Corresponding author.}

\begin{abstract}
Transient pressure diffusion in heterogeneous porous media becomes difficult to
resolve efficiently when permeability is discontinuous and spans several orders
of magnitude. We develop a domain-decomposed random-feature evolutional deep
neural network (DD RF-EDNN) that separates spatial approximation from temporal
evolution. Permeability-informed random features are compressed into an
orthonormal pressure space, and a conservative finite-volume operator is
projected onto this space so that only the reduced coordinates are advanced in
time. This formulation retains the dissipative structure of the discrete flow
problem without repeated neural-network optimization. The analysis quantifies
dictionary and singular-value truncation errors, proves contractivity under an
energy-consistent dissipativity correction, and derives a conditional grid-level
estimate for homogeneous dynamics and time-independent data admitting a steady
lifting. Experiments on low-permeability inclusions,
high-conductivity channels and blocks, and three-dimensional grids demonstrate
consistent accuracy across distinct coefficient structures. The method reaches
a final-time relative $L^2$ error of $9.54\times10^{-4}$ in the principal
benchmark and maintains errors on the order of $10^{-2}$ at a permeability
contrast of $10^3$. Numerical diagnostics further show that the observed error
balance and temporal convergence are consistent with the analysis. These results
support DD RF-EDNN as a structure-preserving and interpretable reduced
formulation for transient simulations on prescribed heterogeneous media.
\end{abstract}


\begin{keywords}
Transient Darcy flow \sep Random feature method \sep Domain decomposition \sep
Evolutional deep neural network \sep Reduced-order model
\end{keywords}

\maketitle

\section{Introduction}

Transient flow through heterogeneous porous media is central to groundwater management, reservoir simulation, and subsurface energy and environmental applications. At the Darcy scale, the pressure satisfies a diffusion equation whose storage and permeability coefficients determine the temporal and spatial response. Geological permeability fields are often discontinuous and may vary by several orders of magnitude. Low-permeability inclusions delay pressure propagation, whereas conductive channels redirect flux over long distances. These features produce strongly nonuniform fluxes and widely separated local diffusion time scales, making repeated transient simulation expensive, particularly in three dimensions and in many-query studies.

Finite-difference, finite-element, and finite-volume methods provide reliable full-order discretizations of this problem. Finite-volume schemes are especially useful because they preserve local conservation and accommodate heterogeneous coefficients and general grids \cite{eymard2000finite,aavatsmark2002introduction}. Multiscale finite-element and finite-volume methods reduce spatial complexity through operator-adapted local bases or coarse transmissibilities \cite{hou1997multiscale,jenny2003multi,hou2009multiscale}. Projection-based reduced-order models instead evolve coordinates in a low-dimensional trial space \cite{benner2015survey}. Proper orthogonal decomposition and reduced-basis methods can be highly efficient when representative solution snapshots are available \cite{sirovich1987turbulence,grepl2005posteriori,hesthaven2015certified,quarteroni2015reduced,haasdonk2008reduced}. Their offline construction, however, generally requires full-order solves, and substantial changes in the permeability configuration may require a new snapshot ensemble. For a prescribed heterogeneous medium, it is therefore desirable to construct a compact pressure space directly from the material field and the discrete operator, before any transient solution is computed.

Neural PDE methods offer alternative approximation spaces, but their computational objectives differ from this setting. Physics-informed neural networks determine a space--time approximation by minimizing residual, initial-condition, and boundary-condition losses, often through a large nonconvex optimization problem \cite{raissi2019physics,jagtap2020extended,karniadakis2021physics,krishnapriyan2021characterizing}. Neural operators, such as DeepONet and the Fourier neural operator, learn mappings between coefficient or state functions and solutions from an offline dataset \cite{lu2021learning,li2020fourier}. Evolutional deep neural networks (EDNNs) instead fit the initial state and then advance the representation parameters according to the governing equation \cite{du2021evolutional}. This sequential viewpoint respects the initial-value structure and avoids optimization over the complete space--time cylinder. Its efficiency nevertheless depends on a spatial representation that is inexpensive to construct, well conditioned, and compatible with the chosen discrete dynamics.

Fixed random features provide a natural mechanism for constructing such a representation. A randomized feature map converts the spatial approximation into a linear problem for its output coefficients \cite{rahimi2007random,guang2006extreme}. For PDEs, random-feature spaces have been used with collocation and least-squares formulations, including global space--time and block time-marching strategies for evolution equations \cite{chen2022bridging,chen2023random}. Problem-adapted variants address anisotropic or discontinuous diffusion \cite{mei2025solving}, while Petrov--Galerkin and discontinuous Galerkin formulations provide variational or locally coupled alternatives \cite{shang2023randomized,sun2024local}. Domain-decomposed random-feature methods have also been combined with two-level solvers and overlapping Schwarz preconditioners \cite{sun2025two,shang2025overlapping}. More recent discrete-time formulations determine random-feature coefficients through a linear least-squares problem at each integration stage \cite{zhou2026discrete}. These developments demonstrate the flexibility of randomized trial spaces, but they leave open a distinct reduced-order question. The spatial basis should be material adapted and well conditioned, while the conservative semi-discrete operator should evolve only fixed reduced coordinates.

We address this question with a domain-decomposed random-feature evolutional deep neural network (DD RF-EDNN) for transient Darcy flow with discontinuous and high-contrast permeability. The method first allocates randomized spatial features according to material subregions and high-contrast interfaces. A two-stage singular-value decomposition removes redundancy and produces an orthogonal pressure basis. The conservative finite-volume operator is then projected onto this basis, yielding a low-dimensional evolution equation for time-dependent coordinates. The resulting workflow requires no full-order solution snapshots for basis construction and no repeated neural-network optimization during time integration. It also retains a direct connection to the discrete operator, which permits stability analysis and a transparent decomposition of the principal error sources.

The main contributions are threefold. First, the material-aware domain decomposition converts permeability information into a randomized trial space and assigns additional approximation capacity near internal interfaces. A deterministic estimate separates dictionary approximation from the two singular-value truncation stages. Second, projection of the finite-volume generator yields a dissipative reduced system, while a logarithmic-norm correction restores Euclidean contractivity if numerical truncation perturbs this structure. The resulting conditional grid-level estimate separates finite-volume consistency, DD-RF approximation, dynamical closure, algebraic perturbation, and time integration; a steady-lifting corollary covers time-independent affine boundary data. Third, two- and three-dimensional experiments examine accuracy, robustness to coefficient geometry and contrast, component-level ablations, error decomposition, and computational scaling. The theory does not assert a probabilistic feature rate or a contrast-uniform constant, and the offline--online interpretation applies to repeated simulations with a fixed spatial operator.

The remainder of the paper is organized as follows. Section 2 introduces the governing problem, the material-adapted random-feature construction, the orthogonal compression, and the reduced evolution. Section 3 presents the stability and error analysis. Section 4 reports the numerical experiments and diagnostic error decomposition. Section 5 discusses the implications and limitations of the method, and Section 6 concludes the paper.

\section{Methodology}\label{sec:method}
We propose a domain-decomposed random-feature evolutional deep neural network (DD RF-EDNN) for transient Darcy flow with discontinuous permeability. The method constructs a material-adapted random-feature space and projects the finite-volume Darcy dynamics onto a low-dimensional orthogonal pressure basis, yielding a compact reduced model for stable time evolution and pressure reconstruction. Figure~\ref{fig:domain_decomposition_general} illustrates the generic two-dimensional material partition and the corresponding material- and interface-adapted random-feature construction.

\begin{figure}[pos=htbp]
	\centering
	\begin{tikzpicture}[
		scale=0.82,
		line join=round,
		line cap=round,
		>=Latex
		]
		
		\begin{scope}[xshift=0cm]
			
			\draw[thick] (0,0) rectangle (7,7);
			
			\fill[gray!25]
			plot[smooth cycle, tension=0.8] coordinates {
				(1.1,4.8)
				(1.5,5.8)
				(2.5,6.0)
				(3.0,5.3)
				(2.7,4.5)
				(1.8,4.3)
			};
			\draw[thick]
			plot[smooth cycle, tension=0.8] coordinates {
				(1.1,4.8)
				(1.5,5.8)
				(2.5,6.0)
				(3.0,5.3)
				(2.7,4.5)
				(1.8,4.3)
			};
			
			\fill[gray!25] (4.6,5.2) rectangle (5.5,6.0);
			\draw[thick] (4.6,5.2) rectangle (5.5,6.0);
			
			\fill[gray!25] (5.7,4.0) rectangle (6.3,4.7);
			\draw[thick] (5.7,4.0) rectangle (6.3,4.7);
			
			\fill[gray!25]
			(1.0,1.1)
			-- (1.0,1.7)
			-- (2.1,1.7)
			-- (2.1,2.3)
			-- (3.4,2.3)
			-- (3.4,2.9)
			-- (4.4,2.9)
			-- (4.4,2.3)
			-- (3.8,2.3)
			-- (3.8,1.7)
			-- (2.7,1.7)
			-- (2.7,1.1)
			-- cycle;
			
			\draw[thick]
			(1.0,1.1)
			-- (1.0,1.7)
			-- (2.1,1.7)
			-- (2.1,2.3)
			-- (3.4,2.3)
			-- (3.4,2.9)
			-- (4.4,2.9)
			-- (4.4,2.3)
			-- (3.8,2.3)
			-- (3.8,1.7)
			-- (2.7,1.7)
			-- (2.7,1.1)
			-- cycle;
			
			\node at (3.5,3.7) {$\Omega_m$};
			
			\node at (2.0,5.2) {$\Omega_n$};
			
			\node at (5.05,5.6) {$\Omega_n$};
			
			\node at (0.55,6.55) {$\Omega$};
			
			\draw[-{Latex[length=2mm]}, thick]
			(3.55,5.65) -- (2.85,5.55);
			\node[right] at (3.55,5.65) {$\Gamma$};
			
			\draw[-{Latex[length=2mm]}, thick]
			(5.8,7.55) -- (5.8,7.0);
			\node[above] at (5.8,7.55) {$\partial\Omega$};
			
			\node[font=\small] at (3.5,-0.55)
			{\textbf{(a)} General material partition};
			
		\end{scope}

		\begin{scope}[xshift=9.5cm]
			
			\draw[thick] (0,0) rectangle (7,7);
			
			\fill[white] (0,3.5) rectangle (7,7);
			\fill[gray!12] (0,0) rectangle (7,3.5);
			
			\fill[gray!30] (0,3.0) rectangle (7,4.0);
			
			\draw[very thick] (0,3.5) -- (7,3.5);
			
			\draw[thick] (0,0) rectangle (7,7);
			
			\node at (1.1,6.2) {$\Omega_m$};
			\node at (1.1,0.8) {$\Omega_n$};
			
			\draw[-{Latex[length=2mm]}, thick]
			(6.35,4.75) -- (5.5,3.55);
			\node[right] at (6.35,4.75) {$\Gamma$};
			
			\draw[-{Latex[length=2mm]}, thick]
			(6.35,2.35) -- (5.5,3.05);
			\node[right] at (5.95,1.89) {$\mathcal{N}_\Gamma$};

			
			\foreach \x/\y in {
				1.4/5.3,
				2.4/6.0,
				3.4/5.1,
				4.5/6.1,
				5.6/5.3,
				2.0/4.5,
				4.0/4.6
			}{
				\draw[thick] (\x,\y) circle (0.09);
			}
			
			\foreach \x/\y in {
				1.4/1.6,
				2.5/0.9,
				3.4/1.8,
				4.6/1.0,
				5.6/1.8,
				2.0/2.5,
				4.2/2.4
			}{
				\draw[thick]
				(\x-0.08,\y-0.08) -- (\x+0.08,\y+0.08);
				\draw[thick]
				(\x-0.08,\y+0.08) -- (\x+0.08,\y-0.08);
			}

			\foreach \x in {0.8,1.6,2.4,3.2,4.0,4.8,5.6,6.4}{
				\fill (\x,3.25) circle (0.055);
				\fill (\x,3.75) circle (0.055);
			}

			\draw[-{Latex[length=2mm]}, thick]
			(6.1,6.2) -- (5.65,5.35);
			\node[right, align=left] at (6.1,6.2)
			{$V_m$};
			
			\draw[-{Latex[length=2mm]}, thick]
			(6.1,0.7) -- (5.65,1.75);
			\node[right, align=left] at (6.1,0.7)
			{$V_n$};
			
			\node[font=\small] at (3.5,-0.55)
			{\textbf{(b)} Material/interface-aware feature enrichment};
			
		\end{scope}

	\end{tikzpicture}
	
	\caption{
		Material-adapted domain decomposition and random-feature construction. (a) Material subregions \(\{\Omega_m\}\) and their interface set \(\Gamma\). (b) Subregion-based feature allocation with additional enrichment near high-contrast interfaces. The schematic is independent of the benchmark geometry.
	}
	\label{fig:domain_decomposition_general}
\end{figure}

\subsection{Model Problem and Finite-volume Discretization}
Let $\Omega \subset \mathbb{R}^d,~d=2,3$ be a bounded Lipschitz domain. Consider the transient Darcy-type diffusion equation
\begin{equation}\label{eq:PDE}
	S(x) \frac{\partial p}{\partial t} = \nabla \cdot \left( A(x) \nabla p \right),\quad x \in \Omega,~ t \in (0, T]
\end{equation}
subject to appropriate initial and boundary conditions. Here $p(x,t)$ denotes the pressure, $S(x)>0$ is the storage coefficient, and $A(x)$ is the permeability coefficient, which may be scalar- or tensor-valued. We assume that $A(x)$ is piecewise smooth and may have jumps across internal material interfaces.

Let the domain be decomposed into material subdomains $\Omega = \bigcup_{m=1}^{M_\Omega} \Omega_m$
with internal interfaces $\Gamma_{mn} = \partial \Omega_m \cap \partial \Omega_n$. Each $\Gamma_{mn}$ is a curve in two dimensions and a surface in three dimensions. The exact solution satisfies pressure and normal-flux continuities across each interface.

We discretize the governing equation by a finite-volume method. Let $\mathcal{T}_h$ be a finite-volume mesh with mesh size $h$, and let
\begin{equation}
	\mathbf{p}_h(t) = \left( p_1(t), p_2(t), \ldots, p_{N_h}(t) \right)^\top \in \mathbb{R}^{N_h} \notag
\end{equation}
be the vector of discrete pressure unknowns. The semi-discrete system can be written as
\begin{equation}
	M_h \frac{d \mathbf{p}_h(t)}{dt} = K_h \mathbf{p}_h(t)+\mathbf{f}_h(t),
\end{equation}
where $M_h$ is the discrete storage matrix, $K_h$ is the discrete diffusion matrix, and $\mathbf{f}_h(t)$ collects possible boundary contributions from nonhomogeneous Dirichlet or Neumann data. Defining $L_h = M_h^{-1} K_h$ and $\mathbf{b}_h(t)=M_h^{-1}\mathbf{f}_h(t)$ gives the reference semi-discrete dynamics
\begin{equation}\label{eq:FV_reference_system}
	\frac{d \mathbf{p}_h(t)}{dt} = L_h \mathbf{p}_h(t)+\mathbf{b}_h(t).
\end{equation}
For the homogeneous Dirichlet cases used in the central-inclusion and four-block tests, $\mathbf{b}_h(t)=0$. For mixed or nonhomogeneous boundary conditions, the same finite-volume operator is used and the affine boundary contribution is projected together with the reduced dynamics.

The operator $L_h$ provides the full-order conservative dynamics. DD RF-EDNN does not directly time-march the full FV system. Instead, it identifies a reduced evolution operator that approximates the action of $L_h$ on a domain-decomposed random feature pressure space.

\subsection{Domain-decomposed Random Feature Approximation}

The pressure field is approximated by fixed spatial random features and time-dependent linear coefficients. To account for discontinuous or high-contrast permeability fields, the random feature dictionary is constructed according to a material-aware decomposition of the domain. In the implementation, this decomposition is represented by masks on the finite-volume grid. For simple benchmark geometries, the masks are prescribed from the known material layout. For irregular high-contrast fields, they can also be identified adaptively from the permeability field, for example by thresholding the logarithmic permeability and expanding the selected high-contrast cells by a small number of grid layers.

Let
\begin{equation}
	\Phi_0 = \left[ \Phi_1, \Phi_2, \ldots, \Phi_{M_{\Omega}}, \Phi_\Gamma \right] \in \mathbb{R}^{N_h \times M_0} \notag
\end{equation}
be the full random-feature matrix evaluated at the finite-volume degrees of freedom. Here $\Phi_m$ contains features associated with the material subdomain $\Omega_m$, and $\Phi_\Gamma$ contains features localized in neighborhoods of internal interfaces. The pressure approximation can be written as
\begin{equation}
	\mathbf{p}_M(t) = \Phi_0 z(t), \quad z(t) \in \mathbb{R}^{M_0}
\end{equation}

At the continuous level, this corresponds to a domain-decomposed random feature space
\begin{equation}
	V_{M_0}^{DD} = \left( \sum_{m=1}^{M_\Omega} V_m \right) + V_\Gamma
\end{equation}

The subdomain feature spaces $V_m$ provide local approximation capacity inside different regions, while $V_\Gamma$ enriches the representation near interfaces. This algebraic representation is dimension-independent.

A typical random feature takes the form
\begin{equation}
	\Phi_j(x) = b_\Omega (x) \omega_j(x) \sigma (w_j \cdot x + \beta_j)
\end{equation} 
where $b_\Omega (x)$ is a boundary-compatible factor, $\omega_j(x)$ is a localization window, $w_j \in \mathbb{R}^d,~\beta_j \in \mathbb{R}$, and $\sigma$ is a nonlinear activation function (here we use $\tanh$).

For homogeneous Dirichlet boundary conditions, $b_\Omega(x)$ is chosen such that $b_\Omega(x)=0$ for $x\in\partial\Omega$. For instance, on the unit hypercube $\Omega=(0,1)^d$, one may take
\begin{equation}
	b_\Omega (x) = \prod_{l=1}^d x_l (1-x_l)
\end{equation}
For nonhomogeneous Dirichlet or mixed boundary conditions, the prescribed boundary data are imposed in the finite-volume system and enter the reduced dynamics through the affine term in Eq.~\eqref{eq:FV_reference_system}. The random-feature expansion is then used for the interior pressure unknowns.

For an internal interface \(\Gamma\), let \(\rho_\Gamma(\mathbf{x})\) denote the signed distance to the interface. We use \(\mathcal N_\Gamma\) to denote a narrow interface neighborhood whose characteristic width is controlled by \(\ell_\Gamma\). A typical smooth localization window associated with this neighborhood is
\begin{equation}
	\omega_\Gamma (x) = \exp \left( -\frac{\rho_\Gamma (x)^2}{\ell_\Gamma^2} \right)
\end{equation}
where $\ell_\Gamma > 0$ controls the thickness of the interface neighborhood. In three dimensions, the same definition applies, with $\Gamma$ interpreted as a surface and $\rho_\Gamma (x)$ as the signed distance to that surface.

The algebraic compression and reduced-evolution steps are dimension-independent. The material-adapted feature dictionary can be instantiated in different ways according to the available geometric information. For the central square benchmark, the known interface geometry is used to construct side-localized features. For the two high-contrast two-dimensional configurations, material masks extracted from the permeability field are used for region-wise feature duplication and enrichment. The three-dimensional cube test uses an analogous region-wise construction. The subsequent compression and reduced-evolution procedures are independent of these case-specific feature-allocation choices.

\subsection{Pressure-space Compression and Whitening}
The full random-feature matrix $\Phi_0$ is generally redundant and ill-conditioned. To obtain a stable reduced coordinate system, we first compress the random feature space in the discrete pressure norm.

Let
\begin{equation}
	A_{\mathrm{int}} = W_h^{1/2} \Phi_0, \notag
\end{equation}
where $W_h$ is a quadrature or mass-weighting matrix associated with the finite-volume degrees of freedom. Compute the thin SVD
\begin{equation}
	A_{\mathrm{int}} = U_{\mathrm{int}} \Sigma_{\mathrm{int}} V_{\mathrm{int}}^\top \notag
\end{equation}

Retaining the dominant right singular vectors gives $T_r = V_{\mathrm{int}, r}$, and the compressed random feature matrix is $\Phi_r = \Phi_0 T_r$. The pressure approximation can then be written as
\begin{equation}
	\mathbf{p}_M(t) = \Phi_r y(t), \quad y(t) \in \mathbb{R}^r
\end{equation}

To construct an orthonormal pressure basis, we further compute
\begin{equation}
	\Phi_r = U_r \Sigma_r V_r^\top \notag
\end{equation}

After truncating small singular values, define
\begin{equation}
	Q = U_{r,k} \in \mathbb{R}^{N_h \times k}, \notag
\end{equation}
then we have $Q^\top Q = I_k$. The final reduced representation is
\begin{equation}
	\mathbf{p}_M(t) = Q a(t), \quad a(t) \in \mathbb{R}^k
\end{equation}

The connection to the original random feature coefficient vector is
\begin{equation}
	z(t) = T_r V_{r,k} \Sigma_{r,k}^{-1} a(t)
\end{equation}

Thus, time evolution is performed in the orthogonal pressure coordinate $a(t)$, while the original random feature representation remains available for continuous evaluation and visualization.

The initial reduced coordinate is obtained by a ridge-regularized pressure projection
\begin{equation}
	a^0 = \arg \min_{a \in \mathbb{R}^k} \left\{ \left\| Q a - \mathbf{p}_h^0 \right\|_2^2 + \eta_0 \left\| a \right\|_2^2 \right\}
\end{equation}

Since $Q^\top Q = I_k$, this gives
\begin{equation}
	a^0 = (1 + \eta_0)^{-1} Q^\top \mathbf{p}_h^0
\end{equation}
When $\eta_0 = 0$, this reduces to the standard orthogonal projection.

\subsection{Reduced Evolution Operator and Time Integration}

The reduced coordinate $a(t)$ is evolved by a low-dimensional linear system
\begin{equation}
	\frac{d a(t)}{dt} = B_{\eta_B} a(t).
\end{equation}

The reduced operator is identified by matching the finite-volume dynamics on the reduced pressure space. The unregularized Galerkin operator is
\begin{equation}
	B_0=Q^\top L_hQ\in\mathbb{R}^{k\times k}.
\end{equation}
Since
\begin{equation}
	\frac{d}{dt} \left( Q a(t) \right) = Q B_{\eta_B} a(t),
\end{equation}
while the finite-volume dynamics gives $L_h Q a(t)$, the regularized operator is determined from
\begin{equation}
	B_{\eta_B} = \arg \min_{\widehat{B} \in \mathbb{R}^{k \times k}} \left\{ \left\| Q \widehat{B} - L_h Q \right\|_F^2 + \eta_B \left\| \widehat{B} \right\|_F^2 \right\}.
\end{equation}

In the implementation, this ridge regression is solved through the normal equation
\begin{equation}
	B_{\eta_B} = \left(Q^\top Q + \eta_B I_k\right)^{-1} Q^\top L_h Q.
\end{equation}

Because $Q^\top Q = I_k$, this reduces to $B_{\eta_B}=(1+\eta_B)^{-1}B_0$. If $\eta_B=0$, then $B_{\eta_B}=B_0$, which is the Galerkin projection of the finite-volume operator onto the orthogonal pressure basis.

To suppress nonphysical Euclidean energy growth caused by finite-precision or truncation effects, we use an optional dissipativity correction. Define the Euclidean logarithmic norm
\begin{equation}
	\mu_2(B_{\eta_B})=
	\lambda_{\max}\!\left(\frac{B_{\eta_B}+B_{\eta_B}^\top}{2}\right)
\end{equation}
and, for a prescribed margin $\delta\geq0$, set
\begin{equation}
	\alpha_{\mathrm{diss}}=
	\max\{0,\mu_2(B_{\eta_B})+\delta\},
	\qquad
	\widetilde B=B_{\eta_B}-\alpha_{\mathrm{diss}}I_k.
\end{equation}
Then $(\widetilde B+\widetilde B^\top)/2\preceq-\delta I_k$. Unlike an eigenvalue-based shift, this construction controls nonnormal transient growth in the Euclidean norm.

Then the reduced homogeneous system
\begin{equation}
	\frac{d a(t)}{dt} = \widetilde B a(t),\quad a(0) = a^0
\end{equation}
is advanced by the Crank--Nicolson scheme:
\begin{equation}
	\left( I_k - \frac{\Delta t}{2} \widetilde B \right) a^{n+1} = \left( I_k + \frac{\Delta t}{2} \widetilde B \right) a^n
\end{equation}

When the finite-volume system contains an affine boundary term in Eq.~\eqref{eq:FV_reference_system}, the reduced coefficient equation becomes
\begin{equation}
	\frac{d a(t)}{dt} = \widetilde B a(t)+Q^\top \mathbf{b}_h(t).
\end{equation}
The corresponding Crank--Nicolson update is
\begin{equation}
	\left( I_k - \frac{\Delta t}{2} \widetilde B \right) a^{n+1}
	=
	\left( I_k + \frac{\Delta t}{2} \widetilde B \right) a^n
	+\frac{\Delta t}{2}Q^\top\left[\mathbf{b}_h(t_{n+1})+\mathbf{b}_h(t_n)\right].
\end{equation}
For time-independent boundary data, the projected forcing term is constant in time.

The pressure approximation is then reconstructed by
\begin{equation}
	\mathbf{p}_M^{n+1} = Q a^{n+1}
\end{equation}

Since $Q^\top Q = I_k$, the reduced coefficient norm is directly tied to the pressure norm in the orthogonal coordinate system.

\begin{remark}
	The construction of the reduced operator $\widetilde B$ is independent of the temporal discretization.
	After $\widetilde B$ has been identified, the coefficient equation $a'(t)=\widetilde B a(t)$ may be advanced by explicit Euler, Crank--Nicolson, fourth-order Runge--Kutta, or other standard ODE solvers.
	We use Crank--Nicolson in the reported experiments to obtain a stable second-order update for the linear reduced dynamics.
\end{remark}

\begin{algorithm}[H]
	\caption{Offline construction and online evolution of the proposed reduced model}
	\label{alg:offline_online_workflow}
	\begin{algorithmic}[1]
		\Require FV matrices $M_h,K_h$, boundary contribution $\mathbf f_h(t)$, initial pressure $\mathbf p_h^0$, permeability field $A$, random-feature configuration, tolerances, $\eta_0,\eta_B,\delta$, time step $\Delta t$, and number of steps $N_t$
		\Ensure Reduced coordinates $\{a^n\}_{n=0}^{N_t}$ and requested pressure snapshots
		\State \textbf{Offline stage}
		\State Form $L_h=M_h^{-1}K_h$ and $\mathbf b_h(t)=M_h^{-1}\mathbf f_h(t)$.
		\State Construct the material-adapted masks and evaluate $\Phi_0$ on the FV degrees of freedom.
		\State Compute $A_{\mathrm{int}}=W_h^{1/2}\Phi_0=U_{\mathrm{int}}\Sigma_{\mathrm{int}}V_{\mathrm{int}}^\top$.
		\State Retain $T_r=V_{\mathrm{int},r}$ and form $\Phi_r=\Phi_0T_r$.
		\State Compute $\Phi_r=U_r\Sigma_rV_r^\top$ and retain $Q=U_{r,k}$.
		\State Set $a^0=(1+\eta_0)^{-1}Q^\top\mathbf p_h^0$.
		\State Form $B_0=Q^\top L_hQ$ and $B_{\eta_B}=(1+\eta_B)^{-1}B_0$.
		\State Compute $\mu_2(B_{\eta_B})=\lambda_{\max}((B_{\eta_B}+B_{\eta_B}^\top)/2)$, set $\alpha_{\mathrm{diss}}=\max\{0,\mu_2(B_{\eta_B})+\delta\}$, and form $\widetilde B=B_{\eta_B}-\alpha_{\mathrm{diss}}I_k$.
		\State Form $G_-=I_k-\frac{\Delta t}{2}\widetilde B$ and $G_+=I_k+\frac{\Delta t}{2}\widetilde B$, and factorize $G_-$ once.
		\State \textbf{Online stage}
		\For{$n=0,\ldots,N_t-1$}
		\State Set $g^n=Q^\top\mathbf b_h(t_n)$ and $g^{n+1}=Q^\top\mathbf b_h(t_{n+1})$.
		\State Solve $G_-a^{n+1}=G_+a^n+\frac{\Delta t}{2}(g^{n+1}+g^n)$.
		\State Reconstruct $\mathbf p_M^{n+1}=Qa^{n+1}$ when a snapshot is requested.
		\EndFor
	\end{algorithmic}
\end{algorithm}

\subsection{Offline--Online Decomposition and Complexity}
\label{sec:complexity}

Let $M_0$ be the number of original random features, $r$ the dimension after the first compression, $k$ the retained orthogonal pressure dimension, $N_t$ the number of time steps, and $N_s$ the number of reconstructed snapshots. Assuming $N_h\geq M_0\geq r\geq k$, the leading costs of the implementation are summarized in Table~\ref{tab:computational_complexity}. The estimates reflect the dense SVD routines used in the present implementation and the sparse structure of the FV operator.

\begin{table}[pos=htbp]
	\centering
	\small
	\caption{Leading computational costs of the offline and online stages.}
	\label{tab:computational_complexity}
	\begin{tabular}{p{0.26\linewidth}p{0.27\linewidth}p{0.34\linewidth}}
		\toprule
		Stage & Arithmetic cost & Principal storage or reuse \\
		\midrule
		Random-feature evaluation &
		$O(N_hM_0)$ &
		$O(N_hM_0)$ for the feature matrix \\
		First thin SVD &
		$O(N_hM_0^2)$ &
		Offline; produces the rank-$r$ compression \\
		Second thin SVD &
		$O(N_hr^2)$ &
		Offline; produces the rank-$k$ pressure basis \\
		Reduced-operator construction &
		$O(\operatorname{nnz}(L_h)k+N_hk^2)$ &
		Offline; $O(N_hk+k^2)$ working storage \\
		Crank--Nicolson factorization &
		$O(k^3)$ &
		Performed once for fixed $\Delta t$ and $\widetilde B$ \\
		Reduced time integration &
		$O(N_tk^2)$ &
		Online; two vectors and the factored $k\times k$ matrix \\
		Pressure reconstruction &
		$O(N_sN_hk)$ &
		Only at requested output times \\
		\bottomrule
	\end{tabular}
\end{table}

The resulting online evolution is independent of the full spatial dimension except when an affine forcing term is projected or a pressure field is reconstructed. For a fixed permeability field and FV operator, the feature basis, reduced operator, and Crank--Nicolson factorization can be reused for different compatible initial states or boundary histories. The offline work is therefore amortized when several transient queries are solved, while the per-step cost is governed by $k$ rather than $N_h$. The measured three-dimensional runtimes in Section~\ref{sec:scalability_3d} are consistent with this separation: basis construction and operator projection dominate the workflow, whereas reduced time marching remains comparatively inexpensive.

\subsection{Reconstruction and Diagnosis}

At time $t_n$, the pressure approximation on the finite-volume degrees of freedom is
\begin{equation}
	\mathbf{p}_M^n = Q a^n \notag
\end{equation}

For off-grid evaluation, one recovers
\begin{equation}
	z^n = T_r V_{r,k} \Sigma_{r,k}^{-1} a^n, \notag
\end{equation}
and evaluates
\begin{equation}
	p_M(x,t_n) = \sum_{j=1}^{M_0} z_j^n \phi_j(x)
\end{equation}

When a finite-volume reference solution is available, we evaluate the pressure reconstruction error at the final time by the relative discrete $L^2$ and $L^\infty$ norms,
\begin{equation}
	E_p^{L^2}
	=
	\frac{\left\|\mathbf{p}_h(T)-\mathbf{p}_M(T)\right\|_2}
	{\left\|\mathbf{p}_h(T)\right\|_2},
	\qquad
	E_p^{L^\infty}
	=
	\frac{\left\|\mathbf{p}_h(T)-\mathbf{p}_M(T)\right\|_\infty}
	{\left\|\mathbf{p}_h(T)\right\|_\infty},
\end{equation}
where $\mathbf{p}_h(T)$ denotes the finite-volume reference pressure and $\mathbf{p}_M(T)$ denotes the reduced-model pressure reconstructed on the same finite-volume grid.

For the Crank--Nicolson update of the reduced dynamics, we define the one-step amplification matrix
\begin{equation}
	R_{\Delta t}
	=
	\left(I_k-\frac{\Delta t}{2}\widetilde B\right)^{-1}
	\left(I_k+\frac{\Delta t}{2}\widetilde B\right),
\end{equation}
and monitor its spectral radius
\begin{equation}
	\rho_{\Delta t}=\rho(R_{\Delta t})
\end{equation}
as a stability diagnostic, where $\rho(\cdot)$ denotes the spectral radius. Values of $\rho_{\Delta t}$ not exceeding one indicate that the reduced time-stepping map does not introduce linear amplification in the retained reduced coordinates.

Table~\ref{tab:error_metrics_definition} summarizes the reported error metrics and stability diagnostic used in the numerical experiments.

\begin{table}[pos=t]
	\centering
	\small
	\setlength{\tabcolsep}{4pt}
	\caption{Summary of reported error metrics and stability diagnostic used in the numerical experiments.}
	\label{tab:error_metrics_definition}
	\begin{tabular}{lll}
		\toprule
		Symbol & Definition & Purpose \\
		\midrule
		$E_p^{L^2}$ &
		$\displaystyle
		\frac{\|\mathbf{p}_h(T)-\mathbf{p}_M(T)\|_2}
		{\|\mathbf{p}_h(T)\|_2}$ &
		Final-time relative pressure error \\[2ex]
		$E_p^{L^\infty}$ &
		$\displaystyle
		\frac{\|\mathbf{p}_h(T)-\mathbf{p}_M(T)\|_\infty}
		{\|\mathbf{p}_h(T)\|_\infty}$ &
		Final-time maximum relative pressure error \\[2ex]
		$\rho_{\Delta t}$ &
		$\displaystyle
		\rho\!\left[
		\left(I_k-\frac{\Delta t}{2}\widetilde B\right)^{-1}
		\left(I_k+\frac{\Delta t}{2}\widetilde B\right)
		\right]$ &
		One-step stability diagnostic \\
		\bottomrule
	\end{tabular}
\end{table}

These quantities assess final-time pressure reconstruction accuracy and time-integration stability. Problem-specific implementation parameters, feature numbers, truncation ranks, time-step sizes, and benchmark errors are reported separately in the numerical experiments.

\section{Theoretical Analysis}\label{sec:theory}
This section establishes the stability and error properties of the proposed reduced model. The analysis distinguishes four levels of approximation:
\begin{enumerate}
	\item the continuous PDE solution $p(t)$;
	\item the finite-volume solution $\mathbf{p}_h(t)$;
	\item the ideal projected solution $\mathbf{p}_Q(t)$;
	\item the stabilized continuous solution $\mathbf{p}_M(t)$ and its fully discrete counterpart $\mathbf{p}_M^n$.
\end{enumerate}
Accordingly, the total error is separated into the finite-volume discretization error, the random feature approximation error, the dynamical closure error, the regularization and stabilization error, and the Crank--Nicolson time integration error. The derivation is formulated on the computational domain $\Omega \subset \mathbb{R}^d$, with $d \in \{2,3\}$, and therefore applies to both two- and three-dimensional transient Darcy problems. Throughout this section, we analyze the homogeneous semi-discrete system, corresponding either to homogeneous boundary data or to a problem obtained after a time-independent boundary lifting. The latter case is summarized in Remark~\ref{rem:boundary_lifting}; genuinely time-dependent boundary forcing is outside the scope of the present estimates.

\subsection{Discrete Setting}
Consider the governing equation~\eqref{eq:PDE} subject to appropriate initial and boundary conditions. The domain may be decomposed into several subdomains
\begin{equation}
	\bar{\Omega} = \bigcup_{m=1}^{M_\Omega} \bar{\Omega}_m \notag
\end{equation}
across whose interfaces the permeability tensor $A(x)$ may be discontinuous.

To match the discretizations used in the numerical experiments, the analysis is restricted to the following coefficient and mesh setting.

\begin{assumption}\label{assumpt:coefficient}
	The storage coefficient is spatially constant, $S(x)\equiv S_0>0$, and the permeability is a positive, piecewise-constant scalar field satisfying
	\begin{equation}
		0<A_{\min}\leq A(x)\leq A_{\max}<\infty
	\end{equation}
	for almost every $x\in\Omega$. The finite-volume mesh is a uniform Cartesian grid, and coefficient jumps are represented cellwise. Diffusive face fluxes are evaluated by a two-point finite-volume formula with harmonic averaging.
\end{assumption}

Let a conservative finite-volume discretization produce the semi-discrete system
\begin{equation}
	M_h \frac{d \mathbf{p}_h(t)}{dt} = K_h \mathbf{p}_h(t), \quad \mathbf{p}_h(t) \in \mathbb{R}^{N_h}
\end{equation}
with
\begin{equation}
	L_h = M_h^{-1} K_h, \notag
\end{equation}
the homogeneous finite-volume reference dynamics becomes $\dot{\mathbf p}_h=L_h\mathbf p_h$. The FVM provides a standard conservative framework for elliptic and parabolic diffusion problems.

In accordance with the Euclidean SVD and orthogonal pressure coordinates used in the implementation, the stability analysis is conducted in the discrete Euclidean inner product,
\begin{equation}
	\left( \mathbf{u}, \mathbf{v} \right)_2 := \mathbf{u}^\top \mathbf{v}, \qquad
	\left\| \mathbf{v} \right\|_2 := \left( \mathbf{v}^\top \mathbf{v} \right)^{1/2}. \notag
\end{equation}
For spatial error estimates, we also use the finite-volume discrete $L^2$ norm
\begin{equation}
	\|\mathbf v\|_h^2:=\sum_{C_i\in\mathcal T_h}|C_i|v_i^2.
\end{equation}
On a uniform grid with cell volume $\omega_h$, $\|\mathbf v\|_h=\omega_h^{1/2}\|\mathbf v\|_2$. Hence, multiplication by this scalar factor does not alter orthogonality, the projector $P_Q$, or relative $L^2$ errors.

The stability analysis requires the following discrete structural condition.

\begin{assumption}[Finite-volume matrix structure]\label{assump:FV_inner_product}
	The finite-volume matrices satisfy
	\begin{equation}
		M_h=m_h I_{N_h},\qquad K_h=K_h^\top\preceq0,\qquad m_h>0.
	\end{equation}
	Consequently, the symmetric part of the finite-volume generator is nonpositive:
	\begin{equation}
		L_h+L_h^\top=\frac{2}{m_h}K_h\preceq0,
	\end{equation}
	or equivalently,
	\begin{equation}
		\mathrm{Re}\!\left( \mathbf{v}^* L_h \mathbf{v} \right) \leq 0, \quad \forall \mathbf{v} \in \mathbb{C}^{N_h}.
	\end{equation}
\end{assumption}

The material-aware random feature dictionary is written as $\Phi_0$. Its feature groups are assigned by the permeability-dependent masks introduced in Section~\ref{sec:method}, so that regions with different coefficient values receive distinct approximation capacity.

After compression and whitening, let $Q \in \mathbb{R}^{N_h \times k}$ satisfy $Q^\top Q = I_k$. The associated DD-RF pressure space is $\mathcal{V}_Q = \mathrm{Range} (Q)$.

\begin{definition}[Orthogonal DD-RF projection]
	The projection from $\mathbb{R}^{N_h}$ onto $\mathcal{V}_Q$ is defined by
	\begin{equation}
		P_Q = Q Q^\top.
	\end{equation}
	The following result establishes its basic property.
\end{definition}

\begin{lemma}[Orthogonal best approximation]\label{lemma:orthogonal_projection}
	The operator $P_Q$ is the Euclidean orthogonal projection onto $\mathcal{V}_Q$. In particular,
	\begin{equation}
		\begin{aligned}
			P_Q^2 &= P_Q \\
			\left( P_Q \mathbf{u}, \mathbf{v} \right)_2 &= \left( \mathbf{u}, P_Q \mathbf{v} \right)_2
		\end{aligned} \notag
	\end{equation}
	and
	\begin{equation}
		\left\| \mathbf{v} - P_Q \mathbf{v} \right\|_2 = \inf_{\mathbf{w} \in \mathcal{V}_Q} \left\| \mathbf{v} - \mathbf{w} \right\|_2. \notag
	\end{equation}
\end{lemma}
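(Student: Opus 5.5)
The whole argument rests on the single identity $Q^\top Q = I_k$ from the compression stage. I would prove idempotency, then self-adjointness, identify the range, and finish with the best-approximation property via a Pythagorean argument.

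\textbf{Idempotency.} Compute
\[
P_Q^2 = Q(Q^\top Q)Q^\top = QQ^\top = P_Q.
\]

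\textbf{Self-adjointness.} For any $\mathbf u,\mathbf v\in\mathbb R^{N_h}$,
\[
(P_Q\mathbf u,\mathbf v)_2 = \mathbf u^\top QQ^\top \mathbf v = (\mathbf u,P_Q\mathbf v)_2,
\]
since $(QQ^\top)^\top = QQ^\top$.

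\textbf{Range.} Every $P_Q\mathbf v = Q(Q^\top\mathbf v)$ lies in $\mathrm{Range}(Q)=\mathcal V_Q$. Conversely, for $\mathbf w = Q\mathbf c$ we have $P_Q\mathbf w = Q(Q^\top Q)\mathbf c = \mathbf w$. Hence $P_Q$ is the identity on $\mathcal V_Q$ and has range exactly $\mathcal V_Q$.

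\textbf{Best approximation.} Fix $\mathbf v$ and an arbitrary $\mathbf w\in\mathcal V_Q$, and write
\[
\mathbf v-\mathbf w = (\mathbf v-P_Q\mathbf v) + (P_Q\mathbf v-\mathbf w).
\]
The second term lies in $\mathcal V_Q$, so it equals $P_Q(P_Q\mathbf v-\mathbf w)$. By self-adjointness and idempotency, the cross term vanishes:
\[
(\mathbf v-P_Q\mathbf v,\,P_Q\mathbf x)_2 = (P_Q\mathbf v-P_Q^2\mathbf v,\,\mathbf x)_2 = 0 \quad \text{for all } \mathbf x.
\]
The Pythagorean identity then gives
\[
\|\mathbf v-\mathbf w\|_2^2 = \|\mathbf v-P_Q\mathbf v\|_2^2 + \|P_Q\mathbf v-\mathbf w\|_2^2 \ \ge\ \|\mathbf v-P_Q\mathbf v\|_2^2 .
\]
Equality holds at $\mathbf w = P_Q\mathbf v\in\mathcal V_Q$, so the infimum is attained there and equals $\|\mathbf v-P_Q\mathbf v\|_2$.

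There is no real obstacle here. The only point needing care is the cross-term orthogonality, and it follows directly from the two algebraic identities established first. As the paper notes, on a uniform grid $\|\cdot\|_h$ is a scalar multiple of $\|\cdot\|_2$, so the same statement holds in the discrete $L^2$ norm.
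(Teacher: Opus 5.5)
Your proof is correct and follows the same route as the paper: idempotency and symmetry of $P_Q=QQ^\top$ from $Q^\top Q=I_k$, then the Pythagorean splitting $\|\mathbf v-\mathbf w\|_2^2=\|\mathbf v-P_Q\mathbf v\|_2^2+\|P_Q\mathbf v-\mathbf w\|_2^2$ for $\mathbf w\in\mathcal V_Q$. You also spell out the range identification and the vanishing cross term, which the paper's appendix proof leaves implicit; its displayed identity is additionally missing the plus sign.
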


The proof is recorded in Appendix~\ref{app:auxiliary_proofs}.

The two compression stages permit a deterministic separation of dictionary and truncation errors. On the uniform grid, $W_h=\omega_hI_{N_h}$. Let $T_r$ contain the first $r$ right singular vectors of $A_{\mathrm{int}}=W_h^{1/2}\Phi_0$, let $\Phi_r=\Phi_0T_r$, and let $Q$ contain the first $k$ left singular vectors of $\Phi_r$, as in Section~\ref{sec:method}. We set $\sigma_{r+1}(A_{\mathrm{int}})=0$ or $\sigma_{k+1}(\Phi_r)=0$ when the corresponding rank is exhausted.

\begin{proposition}[DD-RF compression bound]\label{prop:DDRF_compression}
	On the uniform grid of Assumption~\ref{assumpt:coefficient}, for every $\mathbf v\in\mathbb R^{N_h}$ and $z\in\mathbb R^{M_0}$,
	\begin{equation}
		\begin{aligned}
		\|\mathbf v-P_Q\mathbf v\|_h
		\leq {}& \|\mathbf v-\Phi_0z\|_h
		+\sigma_{r+1}(A_{\mathrm{int}})\|z\|_2 \\
		&+\omega_h^{1/2}\sigma_{k+1}(\Phi_r)\|T_r^\top z\|_2.
		\end{aligned}
	\end{equation}
	Consequently, the projection error is bounded by
	\begin{equation}
		\|\mathbf v-P_Q\mathbf v\|_h\leq \mathcal E_{\mathrm{DDRF},h}(\mathbf v),
	\end{equation}
	where
	\begin{equation}
		\mathcal E_{\mathrm{DDRF},h}(\mathbf v):=
		\inf_{z\in\mathbb R^{M_0}}
		\left\{
		\|\mathbf v-\Phi_0z\|_h
		+\sigma_{r+1}(A_{\mathrm{int}})\|z\|_2
		+\omega_h^{1/2}\sigma_{k+1}(\Phi_r)\|T_r^\top z\|_2
		\right\}.
	\end{equation}
\end{proposition}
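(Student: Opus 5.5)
The plan is to exhibit an explicit competitor $\mathbf w\in\mathcal V_Q$ built from $z$, and then apply the best-approximation property of Lemma~\ref{lemma:orthogonal_projection}. Since $\|\cdot\|_h=\omega_h^{1/2}\|\cdot\|_2$ on the uniform grid, the Euclidean minimality of $P_Q$ carries over to the $h$-norm. So for every $\mathbf w\in\mathcal V_Q$ we have $\|\mathbf v-P_Q\mathbf v\|_h\le\|\mathbf v-\mathbf w\|_h$. Fix $z\in\mathbb R^{M_0}$, set $y:=T_r^\top z\in\mathbb R^r$, and take
\[
\mathbf w:=P_Q\Phi_r y=QQ^\top\Phi_0T_rT_r^\top z\in\mathcal V_Q .
\]
The triangle inequality then splits $\mathbf v-\mathbf w$ into three pieces:
\[
\mathbf v-\mathbf w=(\mathbf v-\Phi_0 z)+\Phi_0(I-T_rT_r^\top)z+(I-P_Q)\Phi_r y .
\]

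For the second piece, I will use the thin SVD $A_{\mathrm{int}}=U_{\mathrm{int}}\Sigma_{\mathrm{int}}V_{\mathrm{int}}^\top$. Here $I-T_rT_r^\top$ is the orthogonal projector onto the trailing right singular vectors (together with the kernel of $A_{\mathrm{int}}$, if any). Hence
\[
\|\Phi_0(I-T_rT_r^\top)z\|_h=\|A_{\mathrm{int}}(I-T_rT_r^\top)z\|_2\le\sigma_{r+1}(A_{\mathrm{int}})\|z\|_2 .
\]
If the rank is exhausted at $r$, this term vanishes, which is consistent with the convention $\sigma_{r+1}(A_{\mathrm{int}})=0$.

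For the third piece, I write $\Phi_r=U_r\Sigma_rV_r^\top$ with $Q=U_{r,k}$. Then $P_Q\Phi_r=U_{r,k}\Sigma_{r,k}V_{r,k}^\top$ is exactly the rank-$k$ SVD truncation, so $(I-P_Q)\Phi_r$ consists of the trailing singular triplets. Its spectral norm is therefore $\sigma_{k+1}(\Phi_r)$, or $0$ if the rank is exhausted. This gives
\[
\|(I-P_Q)\Phi_r y\|_h\le\omega_h^{1/2}\sigma_{k+1}(\Phi_r)\|T_r^\top z\|_2 .
\]
The factor $\omega_h^{1/2}$ appears only here because $\Phi_r$, unlike $A_{\mathrm{int}}$, is not pre-weighted.

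Summing the three bounds gives the first inequality. Taking the infimum over $z$ on the right-hand side, with the left side independent of $z$, yields $\|\mathbf v-P_Q\mathbf v\|_h\le\mathcal E_{\mathrm{DDRF},h}(\mathbf v)$.

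The only delicate step is the bookkeeping of the two different norms and weights. The first truncation error is naturally measured through the weighted matrix $A_{\mathrm{int}}$, while the second is measured through the unweighted $\Phi_r$. I will also check that $P_Q\Phi_r$ coincides with the truncated SVD, which holds because the columns of $Q$ are precisely the leading left singular vectors. Both points reduce to $W_h=\omega_hI_{N_h}$ and orthogonality of singular vectors, so I expect no genuine obstacle.
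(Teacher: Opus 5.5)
Your proof is correct and is essentially the paper's argument. You use the same three-way split into $\mathbf v-\Phi_0z$, $\Phi_0(I-T_rT_r^\top)z$ and $(I-P_Q)\Phi_rT_r^\top z$, the same two SVD tail bounds, and the same placement of $\omega_h^{1/2}$. The difference is only cosmetic: you invoke best approximation with the competitor $P_Q\Phi_rT_r^\top z$, whereas the paper inserts $\Phi_0z$ and uses $\|I-P_Q\|_{h\to h}\le 1$, which is an equivalent argument.
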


\begin{proof}
	Because the $h$-norm is a scalar multiple of the Euclidean norm on the uniform grid, $P_Q$ is also the $h$-orthogonal projector. For any $z$,
	\begin{equation}
		\|\mathbf v-P_Q\mathbf v\|_h
		\leq \|\mathbf v-\Phi_0z\|_h+\|(I-P_Q)\Phi_0z\|_h.
	\end{equation}
	Decompose $z=T_rT_r^\top z+(I-T_rT_r^\top)z$. The discarded part of the first SVD satisfies
	\begin{equation}
		\|\Phi_0(I-T_rT_r^\top)z\|_h
		\leq \sigma_{r+1}(A_{\mathrm{int}})\|z\|_2,
	\end{equation}
	whereas the rank-$k$ SVD of $\Phi_r$ gives
	\begin{equation}
		\|(I-P_Q)\Phi_rT_r^\top z\|_h
		\leq \omega_h^{1/2}\sigma_{k+1}(\Phi_r)\|T_r^\top z\|_2.
	\end{equation}
	The first claim follows by the triangle inequality; taking the infimum over $z$ proves the second.
\end{proof}

The implementation may use a ridge-regularized projection of the initial state. Let $\mathbf{p}_h^0 = \mathbf{p}_0$ and define
\begin{equation}
	a_{\eta_0}^0 := \arg \min_{a \in \mathbb{R}^k} \left\{ \left\| Q a - \mathbf{p}_h^0 \right\|_2^2 + \eta_0 \left\| a \right\|_2^2 \right\}, \quad \eta_0 \geq 0.
\end{equation}

\begin{lemma}[Regularized initial projection]\label{lemma:regularized_initial_projection}
	The regularized initial coordinate is
	\begin{equation}
		a_{\eta_0}^0 = \frac{1}{1 + \eta_0} Q^\top \mathbf{p}_h^0
	\end{equation}
	Moreover, 
	\begin{equation}
		\left\| P_Q \mathbf{p}_h^0 - Q a_{\eta_0}^0 \right\|_2 = \frac{\eta_0}{1 + \eta_0} \left\| P_Q \mathbf{p}_h^0 \right\|_2.
	\end{equation}
\end{lemma}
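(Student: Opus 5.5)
The plan is to compute the minimizer explicitly from the first-order optimality condition, and then compare it with the orthogonal projection using $Q^\top Q=I_k$ and Lemma~\ref{lemma:orthogonal_projection}.

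\emph{Step 1: solve the minimization.} The objective
\begin{equation}
	J(a)=\|Qa-\mathbf p_h^0\|_2^2+\eta_0\|a\|_2^2 \notag
\end{equation}
is a quadratic in $a$ with Hessian $2(Q^\top Q+\eta_0 I_k)=2(1+\eta_0)I_k$. This Hessian is positive definite for every $\eta_0\ge 0$, so $J$ is strictly convex and has a unique minimizer. Setting the gradient $2Q^\top(Qa-\mathbf p_h^0)+2\eta_0 a$ to zero gives the normal equation
\begin{equation}
	(Q^\top Q+\eta_0 I_k)\,a=Q^\top\mathbf p_h^0. \notag
\end{equation}
Since $Q^\top Q=I_k$, this reduces to $(1+\eta_0)a=Q^\top\mathbf p_h^0$, which is the first claim.

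\emph{Step 2: the error identity.} Multiplying the formula from Step 1 by $Q$ gives
\begin{equation}
	Qa_{\eta_0}^0=(1+\eta_0)^{-1}QQ^\top\mathbf p_h^0=(1+\eta_0)^{-1}P_Q\mathbf p_h^0. \notag
\end{equation}
Therefore
\begin{equation}
	P_Q\mathbf p_h^0-Qa_{\eta_0}^0=\Bigl(1-\tfrac{1}{1+\eta_0}\Bigr)P_Q\mathbf p_h^0=\tfrac{\eta_0}{1+\eta_0}\,P_Q\mathbf p_h^0. \notag
\end{equation}
Taking Euclidean norms yields the stated equality.

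There is no genuine obstacle. The only point needing care is the uniqueness of the minimizer in the case $\eta_0=0$. It follows from $Q^\top Q=I_k$: $Q$ has full column rank, so the Hessian is still positive definite. The orthogonality of $Q$ is exactly what collapses the ridge solution to a scalar multiple of the orthogonal projection $P_Q\mathbf p_h^0$.
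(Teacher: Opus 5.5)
Your proposal is correct and follows the paper's proof essentially step for step. Both solve the first-order condition with the positive-definite Hessian $2(1+\eta_0)I_k$ to get $a_{\eta_0}^0=(1+\eta_0)^{-1}Q^\top\mathbf p_h^0$, then multiply by $Q$ to obtain $Qa_{\eta_0}^0=(1+\eta_0)^{-1}P_Q\mathbf p_h^0$, from which the error identity follows directly.
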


The proof is given in Appendix~\ref{app:auxiliary_proofs}.

\subsection{Dissipative Reduced Dynamics}

The ideal reduced operator is identified by the Euclidean Galerkin projection
\begin{equation}
	B_0 = Q^\top L_h Q.
\end{equation}
In the actual computation, a ridge-regularized operator may be obtained from
\begin{equation}
	B_{\eta_B} = \arg \min_{B \in \mathbb{R}^{k \times k}} \left\{ \left\| Q B - L_h Q \right\|_F^2 + \eta_B \left\| B \right\|_F^2 \right\},
\end{equation}
where $\|\cdot\|_F$ denotes the Frobenius norm.

\begin{lemma}[Ridge--Galerkin equivalence]\label{lemma:operator_regression}
	The unique minimizer of the operator-regression problem is
	\begin{equation}
		B_{\eta_B} = \frac{1}{1 + \eta_B} Q^\top L_h Q = \frac{1}{1 + \eta_B} B_0.
	\end{equation}
\end{lemma}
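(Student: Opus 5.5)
The plan is to treat the objective as a strictly convex quadratic in the matrix variable $B$ and identify its unique stationary point. First I would write
\[
J(B)=\|QB-L_hQ\|_F^2+\eta_B\|B\|_F^2
\]
and expand it with the trace inner product $\langle X,Y\rangle_F=\operatorname{tr}(X^\top Y)$. Using $Q^\top Q=I_k$, the first term becomes
\[
\|B\|_F^2-2\operatorname{tr}(B^\top Q^\top L_hQ)+\|L_hQ\|_F^2 .
\]
Hence
\[
J(B)=(1+\eta_B)\|B\|_F^2-2\langle B,B_0\rangle_F+\|L_hQ\|_F^2 .
\]

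Next I would complete the square. This gives
\[
J(B)=(1+\eta_B)\Bigl\|B-\tfrac{1}{1+\eta_B}B_0\Bigr\|_F^2+\|L_hQ\|_F^2-\tfrac{1}{1+\eta_B}\|B_0\|_F^2 .
\]
Since $1+\eta_B\geq 1>0$, the first term is nonnegative and vanishes only at $B=(1+\eta_B)^{-1}B_0$. The remaining terms do not depend on $B$. This yields both existence and uniqueness of the minimizer at once, and it holds for every $\eta_B\geq0$, including the unregularized case $\eta_B=0$.

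As a cross-check, I would compare with the normal equation $(Q^\top Q+\eta_B I_k)B=Q^\top L_hQ$ stated in Section~\ref{sec:method}. That equation is obtained by setting the gradient $2Q^\top(QB-L_hQ)+2\eta_B B$ to zero. With $Q^\top Q=I_k$ it reduces to $(1+\eta_B)B=B_0$, the same answer. Strict convexity follows because the Hessian is $2(1+\eta_B)$ times the identity on $\mathbb R^{k\times k}$, so this stationary point is the unique global minimizer.

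There is no real obstacle. The only point needing care is to use the orthonormality $Q^\top Q=I_k$ from the second SVD stage, which is what makes the quadratic form isotropic. Without it one would only get $B_{\eta_B}=(Q^\top Q+\eta_B I)^{-1}Q^\top L_hQ$, which is still unique whenever $\eta_B>0$ or $Q$ has full column rank.
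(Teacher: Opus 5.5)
Your proof is correct and is essentially the paper's argument. The paper uses $Q^\top Q=I_k$ to compute the gradient $2(1+\eta_B)B-2Q^\top L_hQ$ and a positive-definite Hessian to identify the unique minimizer, which is exactly your cross-check; your completed-square identity packages the same quadratic structure so that existence, uniqueness, and global minimality follow in one line without a separate convexity appeal.
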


The proof is given in Appendix~\ref{app:auxiliary_proofs}.

For a nonnormal matrix, eigenvalue locations alone do not control Euclidean energy growth. We therefore base the optional dissipativity correction on the Euclidean logarithmic norm
\begin{equation}
	\mu_2(C):=\lambda_{\max}\!\left(\frac{C+C^\top}{2}\right).
\end{equation}
For a prescribed margin $\delta\geq0$, define
\begin{equation}\label{eq:dissipativity_shift}
	\alpha_{\mathrm{diss}}:=\max\{0,\mu_2(B_{\eta_B})+\delta\},
	\qquad
	\widetilde B:=B_{\eta_B}-\alpha_{\mathrm{diss}}I_k.
\end{equation}
This definition corrects the symmetric part directly, including in the nonnormal case.

\begin{theorem}[Inherited dissipativity]\label{theo:reduced_stability}
	Under Assumption~\ref{assump:FV_inner_product},
	\begin{equation}
		B_0+B_0^\top\preceq0,\qquad
		B_{\eta_B}+B_{\eta_B}^\top\preceq0,\qquad
		\widetilde B+\widetilde B^\top\preceq-2\delta I_k.
	\end{equation}
	If $a(t)$ solves
	\begin{equation}
		\dot{a}(t) = \widetilde B a(t),
	\end{equation}
	then
	\begin{equation}
		\|a(t)\|_2\leq e^{-\delta t}\|a(0)\|_2,
	\end{equation}
	and the reconstructed pressure satisfies
	\begin{equation}
		\|Qa(t)\|_2\leq e^{-\delta t}\|Qa(0)\|_2.
	\end{equation}
\end{theorem}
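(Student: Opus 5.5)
The plan is to prove the three matrix inequalities in sequence, each one feeding the next, and then obtain the decay bounds from a standard energy identity.

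\emph{Step 1: $B_0$.} By Assumption~\ref{assump:FV_inner_product}, $L_h=m_h^{-1}K_h$ is symmetric and negative semidefinite. The congruence $B_0+B_0^\top=Q^\top(L_h+L_h^\top)Q$ then gives, for any $a\in\mathbb R^k$,
\begin{equation}
a^\top(B_0+B_0^\top)a=(Qa)^\top(L_h+L_h^\top)(Qa)\le 0 . \notag
\end{equation}
\emph{Step 2: $B_{\eta_B}$.} Lemma~\ref{lemma:operator_regression} gives $B_{\eta_B}=(1+\eta_B)^{-1}B_0$. Since $(1+\eta_B)^{-1}>0$, the ordering from Step~1 is preserved. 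As a byproduct, $\mu_2(B_{\eta_B})\le0$.

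\emph{Step 3: $\widetilde B$.} The shift changes the symmetric part by a multiple of the identity:
\begin{equation}
\frac{\widetilde B+\widetilde B^\top}{2}=\frac{B_{\eta_B}+B_{\eta_B}^\top}{2}-\alpha_{\mathrm{diss}}I_k . \notag
\end{equation}
Its largest eigenvalue is therefore $\mu_2(B_{\eta_B})-\alpha_{\mathrm{diss}}$. By definition~\eqref{eq:dissipativity_shift}, $\alpha_{\mathrm{diss}}\ge\mu_2(B_{\eta_B})+\delta$, so this eigenvalue is at most $-\delta$. This yields $\widetilde B+\widetilde B^\top\preceq-2\delta I_k$. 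Note that this step needs only the definition of $\alpha_{\mathrm{diss}}$, not Step~2. Step~2 merely explains why, in exact arithmetic, the correction reduces to a pure margin shift.

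\emph{Step 4: decay.} Differentiate the energy along solutions of $\dot a=\widetilde B a$:
\begin{equation}
\frac{d}{dt}\|a\|_2^2=a^\top(\widetilde B+\widetilde B^\top)a\le-2\delta\|a\|_2^2 . \notag
\end{equation}
Grönwall's inequality then gives $\|a(t)\|_2^2\le e^{-2\delta t}\|a(0)\|_2^2$, and taking square roots gives the first decay bound. The pressure bound follows because $Q^\top Q=I_k$ implies $\|Qa\|_2=\|a\|_2$.

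\emph{Expected difficulty.} No step is substantial. The only points needing care are two. First, Step~3 must argue through the symmetric part and the logarithmic norm rather than through eigenvalues of $\widetilde B$, since $\widetilde B$ may be nonnormal. Second, the congruence in Step~1 must use $L_h+L_h^\top$ rather than $L_h$ itself, which would be essential if $L_h$ were nonsymmetric.
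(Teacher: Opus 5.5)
Your proposal is correct and follows essentially the same route as the paper: the congruence $B_0+B_0^\top=Q^\top(L_h+L_h^\top)Q$, positive scaling for $B_{\eta_B}$, the identity shift of the symmetric part for $\widetilde B$, and then the energy identity with Grönwall together with the isometry $\|Qa\|_2=\|a\|_2$. Your Step~3 uses the single inequality $\alpha_{\mathrm{diss}}\ge\mu_2(B_{\eta_B})+\delta$ in place of the paper's two-case split, which is the same argument stated more compactly; you are also right that this step does not depend on Step~2.
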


\begin{proof}
	By definition, $B_0 = Q^\top L_h Q$ and $B_0^\top = Q^\top L_h^\top Q$. Therefore,
	\begin{equation}
		B_0 + B_0^\top = Q^\top \left( L_h + L_h^\top \right) Q.
	\end{equation}
	For every $a \in \mathbb{R}^k$,
	\begin{equation}
		a^\top \left( B_0 + B_0^\top \right) a = \left( Q a \right)^\top \left( L_h + L_h^\top \right) \left( Qa \right).
	\end{equation}
	
	Assumption~\ref{assump:FV_inner_product} gives $L_h + L_h^\top \preceq 0$. Hence, $a^\top \left( B_0 + B_0^\top \right) a \leq 0$, which proves $B_0 + B_0^\top \preceq 0$.
	
	Since $B_{\eta_B} = \frac{1}{1 + \eta_B} B_0$ and $\left( 1 + \eta_B \right)^{-1} > 0$,
	\begin{equation}
		B_{\eta_B} + B_{\eta_B}^\top = \frac{1}{1 + \eta_B} \left( B_0 + B_0^\top \right) \preceq 0.
	\end{equation}
	By Eq.~\eqref{eq:dissipativity_shift}, either $\alpha_{\mathrm{diss}}=0$ and $\mu_2(B_{\eta_B})\leq-\delta$, or $\alpha_{\mathrm{diss}}=\mu_2(B_{\eta_B})+\delta$. Thus
	\begin{equation}
		\mu_2(\widetilde B)=\mu_2(B_{\eta_B})-\alpha_{\mathrm{diss}}\leq-\delta,
	\end{equation}
	which is equivalent to $\widetilde B+\widetilde B^\top\preceq-2\delta I_k$.
	
	Let $a(t)$ be the solution of the system 
	\begin{equation}
		\dot{a} = \widetilde B a.
	\end{equation}
	Differentiating the squared Euclidean norm gives
	\begin{equation}
		\frac{d}{dt} \left\| a(t) \right\|_2^2 = 2a(t)^\top \widetilde B a(t).
	\end{equation}
	Since the skew-symmetric part does not contribute to a real quadratic form, $2a^\top \widetilde B a=a^\top(\widetilde B+\widetilde B^\top)a$. Therefore,
	\begin{equation}
		\frac{d}{dt}\|a(t)\|_2^2\leq-2\delta\|a(t)\|_2^2. \notag
	\end{equation}
	Multiplication by $e^{2\delta t}$ and integration from $0$ to $t$ give
	\begin{equation}
		\|a(t)\|_2\leq e^{-\delta t}\|a(0)\|_2.
	\end{equation}
	
	By the above derivation, we know that
	\begin{equation}
		\left\| Qa(t) \right\|_2^2 = a(t)^\top Q^\top Q a(t) = a(t)^\top a(t) = \left\| a(t) \right\|_2^2.
	\end{equation}
	The same identity holds at $t=0$, which yields
	\begin{equation}
		\|Qa(t)\|_2\leq e^{-\delta t}\|Qa(0)\|_2.
	\end{equation}
	
	This completes the proof and yields the following corollary.
\end{proof}

\begin{corollary}[Reduced semigroup bounds]\label{coro:semigroup_contractivity}
	For all $t\geq0$,
	\begin{equation}
		\|e^{tB_0}\|_2\leq1,\qquad
		\|e^{tB_{\eta_B}}\|_2\leq1,\qquad
		\|e^{t\widetilde B}\|_2\leq e^{-\delta t}.
	\end{equation}
	In particular, every eigenvalue $\lambda\in\sigma(\widetilde B)$ satisfies $\operatorname{Re}\lambda\leq-\delta$.
\end{corollary}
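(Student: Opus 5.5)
The plan is to read the corollary directly off the energy argument in Theorem~\ref{theo:reduced_stability}, applied to each of the three matrices $B_0$, $B_{\eta_B}$, $\widetilde B$ in turn. Fix $C\in\{B_0,B_{\eta_B},\widetilde B\}$ and let $\gamma$ be a constant with $C+C^\top\preceq-2\gamma I_k$. Theorem~\ref{theo:reduced_stability} gives $\gamma=0$ for $B_0$ and $B_{\eta_B}$, and $\gamma=\delta$ for $\widetilde B$. For an arbitrary $a_0\in\mathbb R^k$, set $a(t)=e^{tC}a_0$, which solves $\dot a=Ca$. The computation in the proof of Theorem~\ref{theo:reduced_stability} uses only the symmetric-part bound, so it applies verbatim:
\[
\frac{d}{dt}\|a\|_2^2=a^\top(C+C^\top)a\le-2\gamma\|a\|_2^2 .
\]
Multiplying by $e^{2\gamma t}$ and integrating yields $\|e^{tC}a_0\|_2\le e^{-\gamma t}\|a_0\|_2$. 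Taking the supremum over unit vectors $a_0$ gives $\|e^{tC}\|_2\le e^{-\gamma t}$, which is all three operator-norm bounds at once.

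For the eigenvalue statement, I would avoid the semigroup and use a Rayleigh-quotient argument. Let $\lambda\in\sigma(\widetilde B)$ with a complex eigenvector $v\in\mathbb C^k$, normalized so that $v^*v=1$. Then $\lambda=v^*\widetilde Bv$ and $\bar\lambda=v^*\widetilde B^\top v$, because $\widetilde B$ is real. Hence
\[
2\operatorname{Re}\lambda=v^*(\widetilde B+\widetilde B^\top)v .
\]
The matrix $\widetilde B+\widetilde B^\top+2\delta I_k$ is real symmetric and negative semidefinite, and such a matrix stays negative semidefinite as a Hermitian form on $\mathbb C^k$. This gives $2\operatorname{Re}\lambda\le-2\delta$. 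As a cross-check, the same conclusion follows from the norm bound: $e^{t\lambda}$ is an eigenvalue of $e^{t\widetilde B}$, so $e^{t\operatorname{Re}\lambda}=|e^{t\lambda}|\le\|e^{t\widetilde B}\|_2\le e^{-\delta t}$ for every $t>0$.

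The only step needing care is passing from the real quadratic-form inequality to complex vectors in the eigenvalue claim. Writing $v=x+iy$ with $x,y$ real, the Hermitian form of a real symmetric matrix $S$ splits as $v^*Sv=x^\top Sx+y^\top Sy$, so real semidefiniteness carries over. Everything else is a direct reuse of Theorem~\ref{theo:reduced_stability}.
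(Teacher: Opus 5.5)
Your proposal is correct and matches the paper's proof: it also reuses the energy argument of Theorem~\ref{theo:reduced_stability} for each of $B_0$, $B_{\eta_B}$, $\widetilde B$, takes the supremum over initial data, and reads off $\operatorname{Re}\lambda\le-\delta$ from $2\operatorname{Re}(\lambda)\|v\|_2^2=v^*(\widetilde B+\widetilde B^*)v$. Your explicit check that real negative semidefiniteness carries over to the Hermitian form on $\mathbb C^k$ fills in a step the paper leaves implicit.
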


The proof follows directly from the energy estimate and is included in Appendix~\ref{app:auxiliary_proofs}.

\begin{remark}[Time-independent nonhomogeneous boundary data]\label{rem:boundary_lifting}
	For time-independent nonhomogeneous boundary conditions, the finite-volume system has the affine form
	\begin{equation}
		\dot{\mathbf p}_h=L_h\mathbf p_h+\mathbf b_h.
	\end{equation}
	If a discrete steady lifting $\bar{\mathbf p}_h$ satisfies
	\begin{equation}
		L_h\bar{\mathbf p}_h+\mathbf b_h=0,
	\end{equation}
	then $\mathbf u_h(t):=\mathbf p_h(t)-\bar{\mathbf p}_h$ obeys $\dot{\mathbf u}_h=L_h\mathbf u_h$. Likewise, for $\dot a=\widetilde B a+Q^\top\mathbf b_h$, any reduced equilibrium $\bar a$ satisfying $\widetilde B\bar a+Q^\top\mathbf b_h=0$ yields $c(t):=a(t)-\bar a$ with $\dot c=\widetilde Bc$. The resulting lifting term is incorporated explicitly in Corollary~\ref{coro:affine_total_error}.
\end{remark}

\subsection{DD-RF Approximation Error}

To separate pressure-space error from later algebraic and temporal perturbations, we introduce an ideal projected solution in $\mathcal{V}_Q$. This auxiliary solution is used only in the analysis and is distinct from the implemented reduced-model solution.

Let the ideal projected DD-RF solution satisfy
\begin{equation}
	\frac{d \mathbf{p}_Q(t)}{dt} = P_Q L_h \mathbf{p}_Q(t), \quad \mathbf{p}_Q(0) = P_Q \mathbf{p}_h^0 \notag.
\end{equation}
Because $\mathbf{p}_Q(t) \in \mathcal{V}_Q$, it can be written as
\begin{equation}
	\mathbf{p}_Q(t) = Q a_0(t),
\end{equation}
where $\dot{a}_0(t) = B_0 a_0(t)$ and $a_0(0) = Q^\top \mathbf{p}_h^0$.

\begin{definition}[Projection--closure quantities]
	The instantaneous DD-RF projection error is defined by
	\begin{equation}
		\varepsilon_Q(t) := \left\| \left( I - P_Q \right) \mathbf{p}_h(t) \right\|_2.
	\end{equation}
	The unresolved-to-resolved closure residual is
	\begin{equation}
		\chi_Q(t) := \left\| P_Q L_h \left( I - P_Q \right) \mathbf{p}_h(t) \right\|_2.
	\end{equation}
	The first quantity measures the state approximation, while the second measures the influence of the unresolved pressure component on the resolved dynamics.
\end{definition}

Proposition~\ref{prop:DDRF_compression} makes the trial-space contribution explicit. In the discrete pressure norm,
\begin{equation}\label{eq:DDRF_projection_bound}
	\varepsilon_{Q,h}(t):=\omega_h^{1/2}\varepsilon_Q(t)
	\leq \mathcal E_{\mathrm{DDRF},h}(\mathbf p_h(t)).
\end{equation}
The three terms in $\mathcal E_{\mathrm{DDRF},h}$ represent dictionary approximation, first-stage compression, and final rank truncation, respectively.

\begin{remark}
	No universal algebraic rate in the number of features is imposed. Such a rate would require additional assumptions on the activation, sampling distribution, target class, spatial regularity, and truncation rule. The deterministic estimate above isolates the realized dictionary and SVD contributions without introducing unsupported probabilistic assumptions.
\end{remark}

Define $\rho(t) = \left( I - P_Q \right) \mathbf{p}_h(t)$ and $\theta(t) = P_Q \mathbf{p}_h(t) - \mathbf{p}_Q(t)$, then $\mathbf{p}_h(t) - \mathbf{p}_Q(t) = \rho(t) + \theta(t)$.

\begin{lemma}[Evolution equation for the resolved error]\label{lemma:resolved_error_evolution}
	The error component $\theta(t) \in \mathcal{V}_Q$ satisfies
	\begin{equation}
		\dot{\theta}(t) = P_Q L_h  \theta(t) + P_Q L_h \rho(t).
	\end{equation}
\end{lemma}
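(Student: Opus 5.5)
The plan is a direct differentiation of the definition of $\theta$, using only the two evolution equations and the linearity and time-independence of $P_Q$.

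Since $\theta(t)=P_Q\mathbf p_h(t)-\mathbf p_Q(t)$ and $P_Q=QQ^\top$ is a constant matrix, differentiation commutes with $P_Q$. The homogeneous reference dynamics gives $\dot{\mathbf p}_h=L_h\mathbf p_h$, and the ideal projected solution satisfies $\dot{\mathbf p}_Q=P_QL_h\mathbf p_Q$. Hence
\begin{equation*}
	\dot\theta(t)=P_QL_h\mathbf p_h(t)-P_QL_h\mathbf p_Q(t)=P_QL_h\bigl(\mathbf p_h(t)-\mathbf p_Q(t)\bigr).
\end{equation*}

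Next, I substitute the splitting $\mathbf p_h-\mathbf p_Q=\rho+\theta$ introduced just before the lemma. Linearity of $P_QL_h$ then yields $\dot\theta=P_QL_h\theta+P_QL_h\rho$, which is the claim.

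It remains to confirm that $\theta(t)\in\mathcal V_Q$. The term $P_Q\mathbf p_h(t)$ lies in $\operatorname{Range}(Q)$ trivially. For the other term, $\mathbf p_Q(t)=Qa_0(t)$ by the representation $\dot a_0=B_0a_0$, $a_0(0)=Q^\top\mathbf p_h^0$. Alternatively, the derivative of $(I-P_Q)\mathbf p_Q$ is $(I-P_Q)P_QL_h\mathbf p_Q=0$ by Lemma~\ref{lemma:orthogonal_projection}, and it starts at zero.

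There is no real obstacle here. The only point requiring care is that the argument relies on the homogeneous setting ($\mathbf b_h=0$), as stipulated at the start of the section. Consistency with the definitions also matters: the initial condition $\theta(0)=P_Q\mathbf p_h^0-P_Q\mathbf p_h^0=0$ is not needed for the lemma itself, but it will be used later when the equation is integrated.
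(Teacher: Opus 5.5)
Your proof is correct and follows essentially the same route as the paper: differentiate $\theta=P_Q\mathbf p_h-\mathbf p_Q$ using that $P_Q$ is constant in time, insert $\dot{\mathbf p}_h=L_h\mathbf p_h$ and $\dot{\mathbf p}_Q=P_QL_h\mathbf p_Q$, and split $\mathbf p_h-\mathbf p_Q=\rho+\theta$. Your explicit check that $\theta(t)\in\mathcal V_Q$, which the paper takes for granted, is a correct and harmless addition.
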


\begin{proof}
	Since $P_Q$ is time-independent, we have $\dot{\theta}(t) = P_Q \dot{\mathbf{p}}_h(t) - \dot{\mathbf{p}}_Q(t)$. Using $\dot{\mathbf{p}}_h = L_h \mathbf{p}_h$ and $\dot{\mathbf{p}}_Q = P_Q L_h \mathbf{p}_Q$ gives
	\begin{equation}
		\dot{\theta} = P_Q L_h \mathbf{p}_h - P_Q L_h \mathbf{p}_Q = P_Q L_h \left( \mathbf{p}_h - \mathbf{p}_Q \right) = P_Q L_h \rho + P_Q L_h \theta. \notag
	\end{equation}
	with $\theta(0) = P_Q \mathbf{p}_h^0 - \mathbf{p}_Q(0) = 0$.
\end{proof}

\begin{theorem}[Projected evolution error]\label{theo:DDRF_error}
	Under Assumption~\ref{assump:FV_inner_product}, we have the following error estimate for the random feature-based pressure:
	\begin{equation}
		\left\| \mathbf{p}_h(t) - \mathbf{p}_Q(t) \right\|_2 \leq \varepsilon_Q(t) + \int_{0}^t \chi_Q(s) ds.
	\end{equation}
\end{theorem}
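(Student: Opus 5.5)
The plan is to use the splitting $\mathbf p_h-\mathbf p_Q=\rho+\theta$ introduced just before Lemma~\ref{lemma:resolved_error_evolution}. By the triangle inequality,
\[
\|\mathbf p_h(t)-\mathbf p_Q(t)\|_2\le \|\rho(t)\|_2+\|\theta(t)\|_2 .
\]
The first term is $\varepsilon_Q(t)$ by definition, so it remains to show $\|\theta(t)\|_2\le\int_0^t\chi_Q(s)\,ds$.

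\textbf{Energy identity.} By Lemma~\ref{lemma:resolved_error_evolution}, $\theta$ satisfies
\[
\dot\theta=P_QL_h\theta+P_QL_h\rho,\qquad \theta(0)=0 .
\]
Since $\theta(t)\in\mathcal V_Q$, we have $P_Q\theta=\theta$. Lemma~\ref{lemma:orthogonal_projection} gives the symmetry of $P_Q$, so
\[
\theta^\top P_QL_h\theta=(P_Q\theta)^\top L_h\theta=\theta^\top L_h\theta=\tfrac12\,\theta^\top(L_h+L_h^\top)\theta\le0
\]
by Assumption~\ref{assump:FV_inner_product}. Differentiating the squared norm then gives
\[
\tfrac12\frac{d}{dt}\|\theta\|_2^2=\theta^\top P_QL_h\theta+\theta^\top P_QL_h\rho\le \|\theta\|_2\,\|P_QL_h\rho\|_2=\|\theta\|_2\,\chi_Q(t),
\]
where the last equality uses $\rho=(I-P_Q)\mathbf p_h$ and the definition of $\chi_Q$.

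\textbf{Passing to the unsquared norm.} This is the main technical point, because $\|\theta\|_2$ may fail to be differentiable where $\theta=0$, and in particular at $t=0$. I would regularize with $\phi_\epsilon:=(\|\theta\|_2^2+\epsilon^2)^{1/2}$. It satisfies
\[
\dot\phi_\epsilon=\frac{\tfrac12\frac{d}{dt}\|\theta\|_2^2}{\phi_\epsilon}\le\frac{\|\theta\|_2\,\chi_Q}{\phi_\epsilon}\le\chi_Q .
\]
Integrating gives $\phi_\epsilon(t)\le\epsilon+\int_0^t\chi_Q(s)\,ds$, and letting $\epsilon\to0$ yields $\|\theta(t)\|_2\le\int_0^t\chi_Q(s)\,ds$.

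Alternatively, Duhamel's formula gives
\[
\theta(t)=\int_0^t e^{(t-s)P_QL_h}P_QL_h\rho(s)\,ds .
\]
The energy argument above shows that $P_QL_h$ is contractive on $\mathcal V_Q$, which is invariant under it. This leads to the same bound.

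Combining the two pieces proves the theorem. All quantities are continuous in $t$ because $\mathbf p_h$ solves a linear ODE, so the integrals are well defined.
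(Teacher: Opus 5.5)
Your proof is correct. It uses the same splitting $\mathbf p_h-\mathbf p_Q=\rho+\theta$ and the same triangle inequality as the paper; the only difference is how you control $\|\theta(t)\|_2$.

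The paper passes to coordinates $\theta=Qb$ and obtains $\dot b=B_0b+Q^\top L_h\rho$ with $b(0)=0$. It then applies variation of constants together with $\|e^{tB_0}\|_2\le 1$ from Corollary~\ref{coro:semigroup_contractivity}. This yields the unsquared bound directly and needs no regularization.

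Your primary argument instead runs the energy estimate on $\theta$ in $\mathbb R^{N_h}$. It uses only $P_Q^\top=P_Q$, $P_Q\theta=\theta$ and $L_h+L_h^\top\preceq 0$, at the cost of the $\phi_\epsilon$ smoothing near zeros of $\theta$. This route is self-contained: it needs neither the semigroup corollary nor the coordinate representation. It would also carry over to settings where only a dissipativity inequality, not an explicit propagator, is available.

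Your alternative Duhamel formula, with $e^{(t-s)P_QL_h}$ restricted to the invariant subspace $\mathcal V_Q$, is exactly the paper's argument written without coordinates. Indeed $P_QL_hQ=QB_0$ gives $e^{tP_QL_h}Q=Qe^{tB_0}$.
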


\begin{proof}
	Using $\mathbf{p}_h - \mathbf{p}_Q = \rho + \theta$ and taking the triangle inequality, we have
	\begin{equation}
		\left\| \mathbf{p}_h(t) - \mathbf{p}_Q(t) \right\|_2 \leq \left\| \rho(t) \right\|_2 + \left\| \theta(t) \right\|_2.
	\end{equation}
	By definition, $\left\| \rho(t) \right\|_2 = \varepsilon_Q(t)$. It remains to estimate $\theta$. Because $\theta(t) \in \mathcal{V}_Q$, there exists $b(t) \in \mathbb{R}^k$ such that $\theta(t) = Q b(t)$. Multiplying the equation in Lemma~\ref{lemma:resolved_error_evolution} by $Q^\top$ gives
	\begin{equation}
		Q^\top \dot{\theta} = Q^\top P_Q L_h \theta + Q^\top P_Q L_h \rho. \notag
	\end{equation}
	Since $Q^\top P_Q = Q^\top Q Q^\top = Q^\top$ and $\theta = Qb$,
	\begin{equation}
		\dot{b} = B_0 b + Q^\top L_h \rho. \notag
	\end{equation}
	The initial condition is $b(0)=0$. By the variation-of-constants formula,
	\begin{equation}
		b(t) = \int_0^t e^{\left( t-s \right) B_0} Q^\top L_h \rho(s) ds.
	\end{equation}
	
	By Corollary~\ref{coro:semigroup_contractivity}, $\|e^{(t-s)B_0}\|_2\leq1$. Hence,
	\begin{equation}
		\left\| b(t) \right\|_2 \leq \int_0^t \left\| Q^\top L_h \rho(s) \right\|_2 ds.
	\end{equation}
	For every $z \in \mathbb{R}^{N_h}$, the orthonormality of $Q$ gives
	\begin{equation}
		\left\| P_Q z \right\|_2^2 = z^\top Q Q^\top Q Q^\top z = \left\| Q^\top z \right\|_2^2
	\end{equation}
	and
	\begin{equation}
		\left\| Q^\top L_h \rho(s) \right\|_2 = \left\| P_Q L_h \rho(s) \right\|_2.
	\end{equation}
	Since $\rho(s) = \left( I - P_Q \right) \mathbf{p}_h(s)$, we have
	\begin{equation}
		\left\| Q^\top L_h \rho(s) \right\|_2 = \left\| P_Q L_h \left( I - P_Q \right) \mathbf{p}_h(s) \right\|_2 = \chi_Q(s).
	\end{equation}
	Therefore, $\left\| b(t) \right\|_2 \leq \int_0^t \chi_Q(s) ds$. Using the orthonormality of $Q$,
	\begin{equation}
		\left\| \theta(t) \right\|_2 = \left\| Qb(t) \right\|_2 = \left\| b(t) \right\|_2.
	\end{equation}
	Consequently, we have
	\begin{equation}
		\left\| \theta(t) \right\|_2 \leq \int_0^t \chi_Q(s) ds.
	\end{equation}
	Combining the bounds for $\rho$ and $\theta$, we can obtain
	\begin{equation}
		\left\| \mathbf{p}_h(t) - \mathbf{p}_Q(t) \right\|_2 \leq \varepsilon_Q(t) + \int_0^t \chi_Q(s) ds.
	\end{equation}
\end{proof}

The closure error can be related directly to the projection error, provided that the unresolved-to-resolved coupling is bounded.

\begin{corollary}[Closure-coupling bound]\label{coro:closure_coupling}
	Suppose
	\begin{equation}
		\gamma_Q = \left\| P_Q L_h \left( I - P_Q \right) \right\|_{2 \rightarrow 2} < \infty,
	\end{equation}
	then we have
	\begin{equation}
		\chi_Q(t) \leq \gamma_Q \varepsilon_Q(t)
	\end{equation}
	and
	\begin{equation}
		\left\| \mathbf{p}_h(t) - \mathbf{p}_Q(t) \right\|_2 \leq \varepsilon_Q(t) + \gamma_Q \int_0^t \varepsilon_Q(s) ds.
	\end{equation}
\end{corollary}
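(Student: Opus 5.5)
The plan is to bound the closure residual directly by the definition of the induced operator norm, and then substitute that bound into Theorem~\ref{theo:DDRF_error}.

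\emph{Step 1: pointwise closure bound.} Use the idempotence of the projector from Lemma~\ref{lemma:orthogonal_projection}: since $P_Q^2=P_Q$, also $(I-P_Q)^2=I-P_Q$. Hence, for every $t$,
\begin{equation}
	P_QL_h(I-P_Q)\mathbf p_h(t)=\bigl[P_QL_h(I-P_Q)\bigr]\,(I-P_Q)\mathbf p_h(t). \notag
\end{equation}
Taking Euclidean norms and applying the definition of $\|\cdot\|_{2\to2}$ gives
\begin{equation}
	\chi_Q(t)\leq\gamma_Q\,\|(I-P_Q)\mathbf p_h(t)\|_2=\gamma_Q\,\varepsilon_Q(t). \notag
\end{equation}
The assumption $\gamma_Q<\infty$ holds automatically in finite dimensions. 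It is stated only so that the constant is named. Note that one could instead bound $\chi_Q(t)\leq\|P_QL_h\|_{2\to2}\,\varepsilon_Q(t)$, but this would give a weaker constant. The idempotence step is what allows the sharper coupling constant $\gamma_Q$ to be used.

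\emph{Step 2: integrated bound.} The functions $\chi_Q$ and $\varepsilon_Q$ are continuous in $s$, because $\mathbf p_h(s)=e^{sL_h}\mathbf p_h^0$ is smooth. Integrating the pointwise bound over $[0,t]$ therefore gives
\begin{equation}
	\int_0^t\chi_Q(s)\,ds\leq\gamma_Q\int_0^t\varepsilon_Q(s)\,ds. \notag
\end{equation}
Inserting this into the estimate of Theorem~\ref{theo:DDRF_error}, which holds under Assumption~\ref{assump:FV_inner_product}, yields the second claim of the corollary.

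\emph{Main obstacle.} No step is genuinely hard. The only point requiring care is the insertion of the extra factor $(I-P_Q)$ in Step~1, so that the bound involves the coupling norm $\gamma_Q$ rather than the cruder norm $\|P_QL_h\|_{2\to2}$.
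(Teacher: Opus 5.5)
Your proposal is correct and follows essentially the same route as the paper: bound $\chi_Q(t)$ by the induced norm of $P_QL_h(I-P_Q)$ times $\|(I-P_Q)\mathbf p_h(t)\|_2$, then integrate and substitute into Theorem~\ref{theo:DDRF_error}. Your explicit use of $(I-P_Q)^2=I-P_Q$ only spells out the step that the paper's one-line operator-norm argument leaves implicit.
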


The proof is given in Appendix~\ref{app:auxiliary_proofs}.

\begin{remark}
	The commonly reported operator residual
	\begin{equation}
		\mathcal{R}_Q = L_hQ - QB_0 = \left( I - P_Q \right) L_hQ \notag
	\end{equation}
	measures the resolved-to-unresolved action of $L_h$. In contrast, $P_Q L_h \left( I -P_Q \right)$ measures unresolved-to-resolved feedback. These two operators are adjoint to each other only when $L_h$ is self-adjoint in the Euclidean inner product. Therefore, the numerical operator residual is an important invariance diagnostic, but should not be identified unconditionally with the closure residual $\chi_Q$.
\end{remark}

\subsection{Regularized Time Evolution}

We compare the ideal reduced system $\dot{a}_0=B_0a_0$, $a_0(0)=Q^\top\mathbf p_h^0$, with the implemented continuous system $\dot{\widetilde a}=\widetilde B\widetilde a$, $\widetilde a(0)=a_{\eta_0}^0$, where $\widetilde B=(1+\eta_B)^{-1}B_0-\alpha_{\mathrm{diss}}I_k$. The corresponding reconstruction is
\begin{equation}
	\mathbf{p}_M(t) = Q\widetilde a(t).
\end{equation}
Thus, $\mathbf p_Q(t)$ and $\mathbf p_M(t)$ coincide only when $\eta_0=\eta_B=\alpha_{\mathrm{diss}}=0$; the fully discrete output is denoted by $\mathbf p_M^n=Qa^n$.

Let
\begin{equation}
	\delta_B := \left\| \widetilde B - B_0 \right\|_2
\end{equation}
be the operator perturbation. From
\begin{equation}
	\begin{aligned}
		\widetilde B - B_0 &= \frac{1}{1 + \eta_B} B_0 - \alpha_{\mathrm{diss}} I_k - B_0 \\
		&= -\frac{\eta_B}{1 + \eta_B} B_0 - \alpha_{\mathrm{diss}} I_k
	\end{aligned}
\end{equation}
and the triangle inequality, we obtain
\begin{equation}
	\left\| \widetilde B - B_0 \right\|_2 \leq \frac{\eta_B}{1 + \eta_B} \left\| B_0 \right\|_2 + \alpha_{\mathrm{diss}}.
\end{equation}
Then we have
\begin{equation}
	\delta_B \leq \frac{\eta_B}{1 + \eta_B} \left\| B_0 \right\|_2 + \alpha_{\mathrm{diss}}.
\end{equation}

\begin{theorem}[Continuous perturbation error]\label{theo:continuous_errors_regularization_shift}
	For $0 \leq t \leq T$, the following inequalities hold:
	\begin{equation}
		\left\| a_0(t) - \widetilde a(t) \right\|_2 \leq \frac{\eta_0}{1 + \eta_0} \left\| P_Q \mathbf{p}_h^0 \right\|_2 + t \delta_B \left\| a_{\eta_0}^0 \right\|_2,
	\end{equation}
	which implies that
	\begin{equation}
		\left\| a_0(t) - \widetilde a(t) \right\|_2 \leq \frac{\eta_0}{1 + \eta_0} \left\| P_Q \mathbf{p}_h^0 \right\|_2 + t \left( \frac{\eta_B}{1 + \eta_B} \left\| B_0 \right\|_2 + \alpha_{\mathrm{diss}} \right)  \left\| a_{\eta_0}^0 \right\|_2.
	\end{equation}
\end{theorem}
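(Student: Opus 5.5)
The plan is to split the coordinate error into an initial-data part and an operator-perturbation part. We write
\begin{equation*}
	a_0(t)-\widetilde a(t)
	= e^{tB_0}\bigl(a_0(0)-a_{\eta_0}^0\bigr)
	+ \bigl(e^{tB_0}-e^{t\widetilde B}\bigr)a_{\eta_0}^0 .
\end{equation*}
This is the identity $a_0(t)=e^{tB_0}a_0(0)$, $\widetilde a(t)=e^{t\widetilde B}a_{\eta_0}^0$, with $e^{tB_0}a_{\eta_0}^0$ added and subtracted. The two terms are then bounded separately using only the contractivity already established in Corollary~\ref{coro:semigroup_contractivity}.

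\textbf{Initial-data term.} Corollary~\ref{coro:semigroup_contractivity} gives $\|e^{tB_0}\|_2\leq1$. Hence this term is at most $\|a_0(0)-a_{\eta_0}^0\|_2=\|Q^\top\mathbf p_h^0-a_{\eta_0}^0\|_2$. Since $Q$ is an isometry, this equals $\|QQ^\top\mathbf p_h^0-Qa_{\eta_0}^0\|_2=\|P_Q\mathbf p_h^0-Qa_{\eta_0}^0\|_2$. Lemma~\ref{lemma:regularized_initial_projection} evaluates this exactly as $\frac{\eta_0}{1+\eta_0}\|P_Q\mathbf p_h^0\|_2$.

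\textbf{Operator-perturbation term.} Use the Duhamel identity
\begin{equation*}
	e^{tB_0}-e^{t\widetilde B}=\int_0^t e^{(t-s)B_0}\,(B_0-\widetilde B)\,e^{s\widetilde B}\,ds .
\end{equation*}
It follows by differentiating $s\mapsto e^{(t-s)B_0}e^{s\widetilde B}$ and integrating from $0$ to $t$. Taking norms, we bound $\|e^{(t-s)B_0}\|_2\leq1$ and $\|e^{s\widetilde B}\|_2\leq e^{-\delta s}\leq1$, both from Corollary~\ref{coro:semigroup_contractivity}. This gives
\begin{equation*}
	\bigl\|\bigl(e^{tB_0}-e^{t\widetilde B}\bigr)a_{\eta_0}^0\bigr\|_2\leq t\,\delta_B\,\|a_{\eta_0}^0\|_2 .
\end{equation*}
Combining the two terms by the triangle inequality yields the first inequality. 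The second inequality then follows by inserting the bound $\delta_B\leq\frac{\eta_B}{1+\eta_B}\|B_0\|_2+\alpha_{\mathrm{diss}}$ derived just before the theorem.

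\textbf{Main obstacle.} The only delicate point is the Duhamel step. The bound is linear in $t$ and carries no exponential growth factor only because both semigroups are contractive in the Euclidean norm. Contractivity of $e^{(t-s)B_0}$ relies on Assumption~\ref{assump:FV_inner_product}, through $B_0+B_0^\top\preceq0$. Contractivity of $e^{s\widetilde B}$ relies on the dissipativity correction, through $\widetilde B+\widetilde B^\top\preceq-2\delta I_k$. Both facts were established in Theorem~\ref{theo:reduced_stability}, so the step goes through once those estimates are invoked. Alternatively, one could work directly with the difference $e(t)=a_0(t)-\widetilde a(t)$. It satisfies $\dot e=B_0e+(B_0-\widetilde B)\widetilde a$. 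Variation of constants combined with $\|\widetilde a(s)\|_2\leq\|a_{\eta_0}^0\|_2$ gives the same bound.
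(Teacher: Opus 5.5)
Your proposal is correct and is essentially the paper's proof: the paper writes $\dot e_B=B_0e_B+(B_0-\widetilde B)\widetilde a$ for $e_B=a_0-\widetilde a$ and applies variation of constants, which is exactly your Duhamel splitting once $e^{s\widetilde B}a_{\eta_0}^0=\widetilde a(s)$ is substituted (as your closing alternative already observes). The argument then closes with the same three ingredients you use: $\|e^{tB_0}\|_2\leq1$, $\|\widetilde a(s)\|_2\leq e^{-\delta s}\|a_{\eta_0}^0\|_2\leq\|a_{\eta_0}^0\|_2$, and the exact initial error $\frac{\eta_0}{1+\eta_0}\|P_Q\mathbf p_h^0\|_2$.
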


\begin{proof}
	Define $e_B(t):=a_0(t)-\widetilde a(t)$. Then
	\begin{equation}
		\dot{e}_B = B_0 a_0 - \widetilde B \widetilde a = B_0 \left( a_0 - \widetilde a \right) + \left( B_0 - \widetilde B \right) \widetilde a.
	\end{equation}
	with initial error $e_B(0) = a_0(0) - a_{\eta_0}^0$.
	
	By the variation-of-constants formula,
	\begin{equation}
		e_B(t) = e^{tB_0} e_B(0) + \int_0^t e^{(t-s)B_0} \left( B_0 - \widetilde B \right) \widetilde a(s) ds.
	\end{equation}
	
	Theorem~\ref{theo:reduced_stability} implies $\left\| e^{tB_0} \right\|_2 \leq 1$. It also gives
	\begin{equation}
		\left\| \widetilde a(s) \right\|_2 \leq e^{-\delta s} \left\| a_{\eta_0}^0 \right\|_2 \leq \left\| a_{\eta_0}^0 \right\|_2.
	\end{equation}
	Taking norms yields $\left\| e_B(t) \right\|_2 \leq \left\| e_B(0) \right\|_2 + \int_0^t \left\| B_0 - \widetilde B \right\|_2 \left\| \widetilde a(s) \right\|_2 ds$ and hence $\left\| e_B(t) \right\|_2 \leq \left\| e_B(0) \right\|_2 + t \delta_B \left\| a_{\eta_0}^0 \right\|_2$.
	
	It remains to estimate $e_B(0)$. Since $a_0(0) = Q^\top \mathbf{p}_h^0$ and $a_{\eta_0}^0 = \frac{1}{1 + \eta_0} Q^\top \mathbf{p}_h^0$, we have $e_B(0)=\frac{\eta_0}{1+\eta_0} Q^\top \mathbf{p}_h^0$. Moreover, $\left\| Q^\top \mathbf{p}_h^0 \right\|_2 = \left\| P_Q \mathbf{p}_h^0 \right\|_2$. Therefore,
	\begin{equation}
		\left\| e_B(0) \right\|_2
		= \frac{\eta_0}{1+\eta_0} \left\| P_Q \mathbf{p}_h^0 \right\|_2,
	\end{equation}
	which provides that
	\begin{equation}
		\left\| a_0(t) - \widetilde a(t) \right\|_2
		\leq
		\frac{\eta_0}{1+\eta_0} \left\| P_Q \mathbf{p}_h^0 \right\|_2 + t \delta_B \left\| a_{\eta_0}^0 \right\|_2.
	\end{equation}
\end{proof}

The stabilized reduced system is discretized by Crank--Nicolson:
\begin{equation}
	\left( I_k - \frac{\Delta t}{2} \widetilde B \right) a^{n+1} = \left( I_k + \frac{\Delta t}{2} \widetilde B\right) a^n.
\end{equation}
The next result is stronger than a modal eigenvalue argument: it proves contractivity even when $\widetilde B$ is non-normal.

\begin{theorem}[Well-posedness and unconditional contractivity of Crank--Nicolson]\label{theo_well_posedness}
	Suppose $\widetilde B + \widetilde B^\top \preceq 0$. Then, for every $\Delta t >0$, $I_k - \frac{\Delta t}{2} \widetilde B$ is invertible, and the Crank--Nicolson update satisfies $\left\| a^{n+1} \right\|_2 \leq \left\| a^n \right\|_2$. Equivalently,
	\begin{equation}
		\left\| \left( I_k - \frac{\Delta t}{2} \widetilde B \right)^{-1} \left( I_k + \frac{\Delta t}{2} \widetilde B \right) \right\|_2 \leq 1.
	\end{equation}
\end{theorem}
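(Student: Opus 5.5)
The plan is a direct energy argument in the Euclidean inner product that never uses eigenvalues, so nonnormality of $\widetilde B$ causes no difficulty. Write $G_\pm = I_k \pm \frac{\Delta t}{2}\widetilde B$ and let $\Delta t>0$ be fixed.

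\emph{Invertibility.} Suppose $G_- v = 0$ for some real $v$. Taking the inner product with $v$ gives
\begin{equation}
	\|v\|_2^2 = \frac{\Delta t}{2}\, v^\top \widetilde B v = \frac{\Delta t}{4}\, v^\top(\widetilde B+\widetilde B^\top)v \leq 0 ,
\end{equation}
so $v=0$. Since $G_-$ is a real square matrix, injectivity on $\mathbb{R}^k$ suffices for invertibility. Equivalently, for every $v$ one has $v^\top G_- v \geq \|v\|_2^2$, hence $\|G_- v\|_2 \geq \|v\|_2$. Thus the Crank--Nicolson update is well defined for every $\Delta t>0$.

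\emph{Contractivity.} Set $u = a^{n+1}$ and $w = a^n$, so that $u - w = \frac{\Delta t}{2}\widetilde B(u+w)$. Taking the inner product of this identity with $u+w$ gives
\begin{equation}
	\|u\|_2^2 - \|w\|_2^2 = (u-w)^\top(u+w) = \frac{\Delta t}{2}(u+w)^\top \widetilde B (u+w).
\end{equation}
Here the cross terms $u^\top w$ and $w^\top u$ cancel because they are equal real scalars. The right-hand side equals $\frac{\Delta t}{4}(u+w)^\top(\widetilde B+\widetilde B^\top)(u+w)\leq 0$, since the skew part of $\widetilde B$ does not contribute to the real quadratic form, exactly as in the proof of Theorem~\ref{theo:reduced_stability}. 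Hence $\|a^{n+1}\|_2\leq\|a^n\|_2$.

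\emph{Norm bound.} Because $w=a^n$ is arbitrary and $a^{n+1}=G_-^{-1}G_+a^n$, the previous step gives $\|G_-^{-1}G_+ w\|_2\leq\|w\|_2$ for all $w$. This is the stated bound $\|G_-^{-1}G_+\|_2\leq 1$. Conversely, the norm bound implies the stepwise inequality, which gives the claimed equivalence. An alternative route is to view $R_{\Delta t}$ as the Cayley transform of a dissipative matrix, but the polarization identity above is shorter.

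The only point needing care is the invertibility step. It must be argued for arbitrary $\Delta t$ without spectral information, and the quadratic-form estimate above does this. The hypothesis $\widetilde B+\widetilde B^\top\preceq 0$ is guaranteed by Theorem~\ref{theo:reduced_stability} for any $\delta\geq 0$.
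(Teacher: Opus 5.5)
Your proof is correct and follows the paper's argument essentially step for step. Invertibility comes from the kernel argument $y=\frac{\Delta t}{2}\widetilde B y \Rightarrow \|y\|_2^2\le 0$, and contractivity from the polarization identity $\|a^{n+1}\|_2^2-\|a^n\|_2^2=(a^{n+1}-a^n)^\top(a^{n+1}+a^n)$ combined with the rearranged Crank--Nicolson relation; your added bound $\|G_-v\|_2\ge\|v\|_2$ is the estimate $\|(I_k-\frac{\Delta t}{2}\widetilde B)^{-1}\|_2\le 1$ that the paper proves separately inside its second-order error theorem.
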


\begin{proof}
	Set $c=\frac{\Delta t}{2}>0$. We first prove that $I_k - c\widetilde B$ is invertible. Suppose $(I_k-c\widetilde B)y=0$, we have $y=c\widetilde By$. Taking the Euclidean inner product with $y$ and taking real parts,
	\begin{equation}
		\left\| y \right\|_2^2 = c \mathrm{Re} \left( \widetilde B y, y \right).
	\end{equation}
	Because $\widetilde B$ is dissipative, $\mathrm{Re}(\widetilde By,y)=\frac{1}{2} y^* \left( \widetilde B + \widetilde B^* \right) y \leq 0$. Therefore, $\left\| y \right\|_2^2 \leq 0$ holds, which implies $y=0$. Thus $I_k - c \widetilde B$ has a trivial kernel and is invertible in finite dimensions.
	
	Writing and rearranging the Crank--Nicolson equation gives
	\begin{equation}
		a^{n+1} - a^n = c \widetilde B \left( a^{n+1} + a^n \right).
	\end{equation}
	Use the polarization identity
	\begin{equation}
		\| a^{n+1} \|_2^2 - \| a^n \|_2^2 = \mathrm{Re} \left( a^{n+1}-a^n,a^{n+1}+a^n\right).
	\end{equation}
	The substitution gives
	\begin{equation}
		\| a^{n+1} \|_2^2 - \| a^n \|_2^2 = c \mathrm{Re} \left( \widetilde B \left( a^{n+1} + a^n \right), a^{n+1}+a^n\right) \leq 0.
	\end{equation}
	Then we have $\| a^{n+1}\|_2^2 \leq \|a^n\|_2^2$ and $\|a^{n+1}\|_2 \leq \| a^n\|_2$.
	Since the inequality holds for arbitrary $a^n$, the induced norm of the Crank--Nicolson propagation matrix is at most one.
\end{proof}

We next prove the second-order temporal error.

\begin{theorem}[Second-order Crank--Nicolson error]\label{theo:second_CN_error}
	Let $\widetilde a(t)$ be the solution of $\dot{\widetilde a}=\widetilde B\widetilde a,~\widetilde a(0)=a_{\eta_0}^0$ and $a^n$ be its Crank--Nicolson approximation. If $\widetilde a \in C^3 \left( [0,T];\mathbb{R}^k\right)$, then
	\begin{equation}
		\left\| \widetilde a(t_n)-a^n \right\|_2 \leq \frac{T}{12}\Delta t^2 \max_{0\leq t \leq T} \left\| \widetilde a^{(3)}(t) \right\|_2.
	\end{equation}
	Since $\widetilde a^{(3)}(t)=\widetilde B^3 \widetilde a(t)$, we also have
	\begin{equation}
		\left\| \widetilde a(t_n)-a^n \right\|_2 \leq \frac{T}{12} \Delta t^2 \max_{0\leq t\leq T} \left\| \widetilde B^3 \widetilde a(t) \right\|_2.
	\end{equation}
\end{theorem}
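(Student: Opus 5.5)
We follow the standard local truncation error plus stability argument, using Theorem~\ref{theo_well_posedness} to propagate local errors without amplification. Set $c=\Delta t/2$, $G_\pm=I_k\pm c\widetilde B$, and $R_{\Delta t}=G_-^{-1}G_+$. Theorem~\ref{theo:reduced_stability} gives $\widetilde B+\widetilde B^\top\preceq-2\delta I_k\preceq0$. Hence Theorem~\ref{theo_well_posedness} applies: $G_-$ is invertible and $\|R_{\Delta t}\|_2\leq1$.

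Define the local truncation error $\tau^n$ by inserting the exact solution into the scheme:
\begin{equation}
	\tau^n:=\widetilde a(t_{n+1})-\widetilde a(t_n)-c\widetilde B\bigl(\widetilde a(t_{n+1})+\widetilde a(t_n)\bigr).
\end{equation}
Since $\widetilde B\widetilde a=\dot{\widetilde a}$, this equals
\begin{equation}
	\tau^n=\int_{t_n}^{t_{n+1}}\dot{\widetilde a}(s)\,ds-\frac{\Delta t}{2}\bigl(\dot{\widetilde a}(t_n)+\dot{\widetilde a}(t_{n+1})\bigr),
\end{equation}
which is the trapezoidal-rule quadrature error for $\dot{\widetilde a}$. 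Apply the Peano-kernel form of the trapezoidal error componentwise. This gives
\begin{equation}
	\tau^n=-\int_{t_n}^{t_{n+1}}\tfrac12(s-t_n)(t_{n+1}-s)\,\widetilde a^{(3)}(s)\,ds,
\end{equation}
and the kernel is nonnegative with integral $\Delta t^3/12$. Taking vector norms inside the integral therefore yields $\|\tau^n\|_2\leq\frac{\Delta t^3}{12}\max_t\|\widetilde a^{(3)}(t)\|_2$. Working with the integral form is the point to be careful about: a componentwise mean-value argument would introduce a spurious $\sqrt{k}$ factor.

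Next set $e^n=\widetilde a(t_n)-a^n$. Subtracting the scheme from the definition of $\tau^n$ gives $G_-e^{n+1}=G_+e^n+\tau^n$, so that
\begin{equation}
	e^{n+1}=R_{\Delta t}e^n+G_-^{-1}\tau^n,
\end{equation}
with $e^0=0$ because $a^0=a_{\eta_0}^0=\widetilde a(0)$.

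The remaining step, and the main obstacle, is bounding $\|G_-^{-1}\|_2\leq1$. We would use the same energy argument as in Theorem~\ref{theo_well_posedness}. If $G_-y=w$, then
\begin{equation}
	\|y\|_2^2=(w,y)+c\,(\widetilde By,y)\leq(w,y)\leq\|w\|_2\|y\|_2,
\end{equation}
so $\|y\|_2\leq\|w\|_2$.

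Finally, combining the recursion with $\|R_{\Delta t}\|_2\leq1$ and $\|G_-^{-1}\|_2\leq1$ gives $\|e^n\|_2\leq\sum_{j<n}\|\tau^j\|_2$. Using $n\Delta t\leq T$, the sum is at most $\frac{T}{12}\Delta t^2\max_{[0,T]}\|\widetilde a^{(3)}\|_2$. The second form of the estimate follows by differentiating the linear ODE three times, which gives $\widetilde a^{(3)}=\widetilde B^3\widetilde a$. Because $\widetilde a$ is analytic, the $C^3$ hypothesis holds automatically.
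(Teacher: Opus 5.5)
Your proposal is correct and follows essentially the same route as the paper. Both arguments bound the trapezoidal defect of $\dot{\widetilde a}$, form the error recursion $e^{n+1}=R_{\Delta t}e^n+(I_k-\frac{\Delta t}{2}\widetilde B)^{-1}d^{n+1}$, use dissipativity to get $\|R_{\Delta t}\|_2\le1$ and $\|(I_k-\frac{\Delta t}{2}\widetilde B)^{-1}\|_2\le1$, and sum with $n\Delta t\le T$; your Peano-kernel treatment of the vector-valued defect justifies a bound the paper only asserts, and your energy estimate for $\|G_-^{-1}\|_2$ is an equivalent variant of the paper's expansion of $\|y-c\widetilde By\|_2^2$.
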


\begin{proof}
	The exact reduced solution satisfies
	\begin{equation}
		\widetilde a(t_{n+1})-\widetilde a(t_n)=\int_{t_n}^{t_{n+1}} \widetilde a'(s) ds. \notag
	\end{equation}
	Applying the trapezoidal rule to the integral, we have
	\begin{equation}
		\int_{t_n}^{t_{n+1}}\widetilde a'(s) ds = \frac{\Delta t}{2} \left[ \widetilde a'(t_n)+\widetilde a'(t_{n+1}) \right] + d^{n+1},
	\end{equation}
	where the trapezoidal defect satisfies $\left\| d^{n+1} \right\|_2 \leq \frac{\Delta t^3}{12} \max_{t_n \leq t \leq t_{n+1}} \left\| \widetilde a^{(3)}(t)\right\|_2$. Since $\widetilde a'(t)=\widetilde B\widetilde a(t)$, the exact solution satisfies
	\begin{equation}
		\left( I_k - \frac{\Delta t}{2}\widetilde B \right) \widetilde a(t_{n+1})
		=
		\left( I_k + \frac{\Delta t}{2}\widetilde B\right) \widetilde a(t_n)+d^{n+1}.
	\end{equation}
	
	Define $e^n=\widetilde a(t_n)-a^n$. Subtracting the numerical equation from the exact defect equation gives
	\begin{equation}
		\left( I_k - \frac{\Delta t}{2}\widetilde B\right) e^{n+1}
		=
		\left( I_k + \frac{\Delta t}{2}\widetilde B\right) e^n + d^{n+1}.
	\end{equation}
	Hence, $e^{n+1}=R_{\Delta t}(\widetilde B)e^n+\left( I_k - \frac{\Delta t}{2}\widetilde B\right)^{-1} d^{n+1}$, where $R_{\Delta t}(\widetilde B)=
	\left( I_k-\frac{\Delta t}{2}\widetilde B\right)^{-1}\left( I_k+\frac{\Delta t}{2}\widetilde B\right)$.
	
	By Theorem~\ref{theo_well_posedness}, we know that $\left\| R_{\Delta t}(\widetilde B)\right\|_2 \leq 1$. Let $c=\frac{\Delta t}{2}$ and $z=\left( I_k-c\widetilde B\right)y$, we have
	\begin{equation}
		\left\| z \right\|_2^2
		=
		\left\| y-c\widetilde By \right\|_2^2
		=
		\left\| y \right\|_2^2 - 2c\mathrm{Re}(\widetilde By,y) + c^2\left\|\widetilde By\right\|_2^2.
	\end{equation}
	Since $\widetilde B$ is dissipative, $\mathrm{Re}(\widetilde By,y)\leq 0$. So we have $\| z \|_2^2 \geq \| y \|_2^2$ and $\| y \|_2 \leq \| z \|_2$. This result proves $\left\| (I_k-c\widetilde B)^{-1}\right\|_2\leq 1$. Consequently, we have
	\begin{equation}
		\left\| e^{n+1} \right\|_2 \leq \left\|e^n\right\|_2 + \left\| d^{n+1}\right\|_2. \notag
	\end{equation}
	
	Since the exact and numerical initial conditions coincide, $e^0=0$ holds. Iterating the inequality gives
	\begin{equation}
		\left\| e^n \right\|_2 \leq \sum_{j=0}^{n-1} \left\| d^{j+1} \right\|_2.
	\end{equation}
	Using the defect bound, we have
	\begin{equation}
		\left\| e^n \right\|_2 \leq n\frac{\Delta t^3}{12} \max_{0\leq t\leq T} \left\| \widetilde a^{(3)}(t) \right\|_2.
	\end{equation}
	Since $t_n=n\Delta t\leq T$, it follows that $n\Delta t^3=t_n\Delta t^2\leq T\Delta t^2$. Therefore,
	\begin{equation}
		\left\| e^n \right\|_2 \leq \frac{T}{12}\Delta t^2 \max_{0\leq t \leq T} \left\| \widetilde a^{(3)}(t)\right\|_2.
	\end{equation}
	
	For the autonomous linear system $\widetilde a'=\widetilde B\widetilde a$, successive differentiation gives
	\begin{equation}
		\widetilde a''=\widetilde B^2\widetilde a,\quad \widetilde a^{(3)}=\widetilde B^3\widetilde a.
	\end{equation}
	The second estimate follows.
\end{proof}

\subsection{Grid-Level Error Estimate}
To compare the numerical solution with the continuous solution, let $\mathcal{I}_h p(t) \in \mathbb{R}^{N_h}$ denote the vector of exact cell averages,
\begin{equation}
	\left(\mathcal I_h p(t)\right)_i:=\frac{1}{|C_i|}\int_{C_i}p(x,t)\,dx.
\end{equation}

Set $\mathbf u_h(t):=\mathcal I_hp(t)$. Unlike the finite-volume solution, $\mathbf u_h$ does not in general satisfy the semi-discrete equation exactly.

\begin{assumption}[Finite-volume consistency]\label{assump:FV_consistency}
	For some $s_{\mathrm{FV}}>0$ and mesh-independent constants $C_{\mathrm{con}},C_0$, the residual
	\begin{equation}
		\tau_h(t):=\dot{\mathbf u}_h(t)-L_h\mathbf u_h(t)
	\end{equation}
	satisfies
	\begin{equation}
		\|\tau_h(t)\|_h\leq C_{\mathrm{con}}h^{s_{\mathrm{FV}}},
		\qquad 0\leq t\leq T,
	\end{equation}
	and the initial discretization obeys
	\begin{equation}
		\|\mathbf u_h(0)-\mathbf p_h^0\|_h\leq C_0h^{s_{\mathrm{FV}}}.
	\end{equation}
\end{assumption}

\begin{proposition}[Finite-volume evolution error]\label{prop:FV_error}
	Under Assumptions~\ref{assump:FV_inner_product} and~\ref{assump:FV_consistency},
	\begin{equation}
		\|\mathcal I_hp(t)-\mathbf p_h(t)\|_h
		\leq \bigl(C_0+tC_{\mathrm{con}}\bigr)h^{s_{\mathrm{FV}}}
		\leq C_{\mathrm{FV}}(T)h^{s_{\mathrm{FV}}},
	\end{equation}
	where $C_{\mathrm{FV}}(T):=C_0+TC_{\mathrm{con}}$. The order $s_{\mathrm{FV}}$ depends on the flux construction, interface alignment, and piecewise regularity; no universal order is asserted here.
\end{proposition}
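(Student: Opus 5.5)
The plan is a standard stability-plus-consistency argument in the discrete $h$-norm. Define the grid error $\mathbf e(t):=\mathbf u_h(t)-\mathbf p_h(t)$ with $\mathbf u_h=\mathcal I_hp$. Subtract the homogeneous finite-volume dynamics $\dot{\mathbf p}_h=L_h\mathbf p_h$ from the defining identity $\dot{\mathbf u}_h=L_h\mathbf u_h+\tau_h$ of Assumption~\ref{assump:FV_consistency}. This gives the linear error equation
\begin{equation}
	\dot{\mathbf e}(t)=L_h\mathbf e(t)+\tau_h(t),\qquad \mathbf e(0)=\mathbf u_h(0)-\mathbf p_h^0. \notag
\end{equation}

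The first key step is contractivity of the full-order semigroup, $\|e^{tL_h}\|_2\leq1$. This is proved exactly as in Theorem~\ref{theo:reduced_stability}, with $L_h$ in place of $\widetilde B$. Assumption~\ref{assump:FV_inner_product} gives $L_h+L_h^\top\preceq0$, so for $\mathbf w(t)=e^{tL_h}\mathbf w_0$ we have $\frac{d}{dt}\|\mathbf w\|_2^2=\mathbf w^\top(L_h+L_h^\top)\mathbf w\leq0$. On the uniform grid $\|\cdot\|_h=\omega_h^{1/2}\|\cdot\|_2$, and multiplying by this scalar does not change induced operator norms. Hence $\|e^{tL_h}\|_{h\to h}\leq1$ as well.

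The second step applies the variation-of-constants formula,
\begin{equation}
	\mathbf e(t)=e^{tL_h}\mathbf e(0)+\int_0^t e^{(t-s)L_h}\tau_h(s)\,ds. \notag
\end{equation}
Taking $h$-norms and using contractivity gives $\|\mathbf e(t)\|_h\leq\|\mathbf e(0)\|_h+\int_0^t\|\tau_h(s)\|_h\,ds$. Inserting the two bounds from Assumption~\ref{assump:FV_consistency} yields $(C_0+tC_{\mathrm{con}})h^{s_{\mathrm{FV}}}$. The final inequality follows because $t\leq T$, which gives the constant $C_{\mathrm{FV}}(T)$.

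No step is genuinely difficult. The only point needing care is regularity. The variation-of-constants formula requires $\mathbf u_h$ to be differentiable and $\tau_h$ to be integrable in time. Both are implicit in Assumption~\ref{assump:FV_consistency}, since $\tau_h$ is defined pointwise there and bounded uniformly on $[0,T]$. If more care is wanted, one can instead run an energy argument directly on $\|\mathbf e\|_h$: from $\frac{d}{dt}\|\mathbf e\|_h^2\leq2\|\tau_h\|_h\|\mathbf e\|_h$ we get $\frac{d}{dt}\|\mathbf e\|_h\leq\|\tau_h\|_h$ wherever $\mathbf e\neq0$, and integrating in time gives the same bound. The remark that $s_{\mathrm{FV}}$ is not universal is interpretive and needs no proof.
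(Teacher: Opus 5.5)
Your proposal is correct and follows essentially the same route as the paper: the error equation $\dot{\mathbf e}_h=L_h\mathbf e_h+\tau_h$, contractivity $\|e^{tL_h}\|_{h\to h}\leq1$ from Assumption~\ref{assump:FV_inner_product} together with the uniform-grid scaling $\|\cdot\|_h=\omega_h^{1/2}\|\cdot\|_2$, and variation of constants followed by the consistency bounds of Assumption~\ref{assump:FV_consistency}. Your remarks on time regularity and the alternative energy argument are sound. They go slightly beyond what the paper spells out but do not change the argument.
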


\begin{proof}
	With $\mathbf e_h:=\mathbf u_h-\mathbf p_h$, subtraction of the semi-discrete equation gives
	\begin{equation}
		\dot{\mathbf e}_h=L_h\mathbf e_h+\tau_h.
	\end{equation}
	Assumption~\ref{assump:FV_inner_product} and the uniform-grid relation between $\|\cdot\|_h$ and $\|\cdot\|_2$ imply $\|e^{tL_h}\|_{h\to h}\leq1$. Variation of constants therefore yields
	\begin{equation}
		\|\mathbf e_h(t)\|_h
		\leq\|\mathbf e_h(0)\|_h+
		\int_0^t\|\tau_h(s)\|_h\,ds,
	\end{equation}
	and the stated estimate follows from Assumption~\ref{assump:FV_consistency}.
\end{proof}

Write the fully discrete DD RF-EDNN approximation as $\mathbf p_M^n=Qa^n$, and define the pressure-scaled closure quantity
\begin{equation}
	\chi_{Q,h}(t):=\omega_h^{1/2}\chi_Q(t).
\end{equation}

\begin{theorem}[Conditional grid-level estimate]\label{theo:total_error}
	Suppose the assumptions of the preceding results hold and $\widetilde a\in C^3([0,T];\mathbb R^k)$. Then
	\begin{equation}
		\begin{aligned}
			\left\| \mathcal{I}_h p(t_n) - \mathbf{p}_M^n \right\|_h
			\leq {}& C_{\mathrm{FV}}(T) h^{s_{\mathrm{FV}}}
			+ \mathcal E_{\mathrm{DDRF},h}(\mathbf p_h(t_n))
			+ \int_{0}^{t_n} \chi_{Q,h}(s)\,ds \\
			&+ \omega_h^{1/2}\frac{\eta_0}{1 + \eta_0}
			\left\| P_Q \mathbf{p}_h^0 \right\|_2 \\
			&+ \omega_h^{1/2}t_n
			\left( \frac{\eta_B}{1 + \eta_B}\left\| B_0 \right\|_2 + \alpha_{\mathrm{diss}} \right)
			\left\| a_{\eta_0}^0 \right\|_2 \\
			&+ \omega_h^{1/2}\frac{T}{12}\Delta t^2
			\max_{0 \leq t \leq T}
			\left\| \widetilde B^3 \widetilde a(t) \right\|_2.
		\end{aligned}
	\end{equation}
\end{theorem}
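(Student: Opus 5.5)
The plan is a telescoping triangle inequality over the four approximation levels, followed by applying the earlier estimates to each piece. With $t=t_n$, split
\begin{equation}
	\mathcal I_hp(t_n)-\mathbf p_M^n
	=\bigl(\mathcal I_hp(t_n)-\mathbf p_h(t_n)\bigr)
	+\bigl(\mathbf p_h(t_n)-\mathbf p_Q(t_n)\bigr)
	+Q\bigl(a_0(t_n)-\widetilde a(t_n)\bigr)
	+Q\bigl(\widetilde a(t_n)-a^n\bigr),
\end{equation}
using $\mathbf p_Q=Qa_0$, $\mathbf p_M(t)=Q\widetilde a(t)$ and $\mathbf p_M^n=Qa^n$. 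Taking the $\|\cdot\|_h$ norm gives four terms.

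The first term is bounded directly by Proposition~\ref{prop:FV_error}, since $t_n\le T$. The second term is treated in the Euclidean norm and then rescaled by $\omega_h^{1/2}$, using $\|\cdot\|_h=\omega_h^{1/2}\|\cdot\|_2$ on the uniform grid. Theorem~\ref{theo:DDRF_error} gives $\|\mathbf p_h(t_n)-\mathbf p_Q(t_n)\|_h\le \omega_h^{1/2}\varepsilon_Q(t_n)+\int_0^{t_n}\omega_h^{1/2}\chi_Q(s)\,ds$. By \eqref{eq:DDRF_projection_bound}, which rests on Proposition~\ref{prop:DDRF_compression}, the first piece is at most $\mathcal E_{\mathrm{DDRF},h}(\mathbf p_h(t_n))$. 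The second piece is exactly $\int_0^{t_n}\chi_{Q,h}(s)\,ds$.

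For the third and fourth terms, orthonormality of $Q$ gives $\|Qv\|_h=\omega_h^{1/2}\|v\|_2$ for any coefficient vector $v$. Applying Theorem~\ref{theo:continuous_errors_regularization_shift} at $t=t_n$ then produces the $\eta_0$ term and the $t_n(\eta_B\|B_0\|_2/(1+\eta_B)+\alpha_{\mathrm{diss}})\|a_{\eta_0}^0\|_2$ term, each multiplied by $\omega_h^{1/2}$. Theorem~\ref{theo:second_CN_error} supplies the $\omega_h^{1/2}\frac{T}{12}\Delta t^2\max\|\widetilde B^3\widetilde a\|_2$ term. Its hypotheses hold because the $C^3$ regularity of $\widetilde a$ is assumed in the statement, and because $\widetilde B+\widetilde B^\top\preceq-2\delta I_k\preceq0$ by Theorem~\ref{theo:reduced_stability}. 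Summing the four bounds gives the claimed estimate.

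The argument is essentially bookkeeping; the main obstacle is consistency of norms and of hypotheses across the results being combined. Theorems~\ref{theo:DDRF_error} through~\ref{theo:second_CN_error} are stated in $\|\cdot\|_2$, while Proposition~\ref{prop:FV_error} and $\mathcal E_{\mathrm{DDRF},h}$ are stated in $\|\cdot\|_h$. The proof should check the conversions explicitly, in particular that $\omega_h^{1/2}$ is not applied twice to the FV and DDRF terms. One must also confirm that every result is applied to the same trajectory $\mathbf p_h$ with initial datum $\mathbf p_h^0$: the FV error compares against $\mathcal I_hp$, whereas the projection and closure errors are measured along the FV solution rather than along $\mathcal I_hp$. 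Only the homogeneous dynamics enter here; the affine case is deferred to the lifting corollary.
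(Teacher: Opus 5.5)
Your proposal is correct and is essentially the paper's proof. The paper uses the same four-term split through $\mathbf p_h(t_n)$, $\mathbf p_Q(t_n)=Qa_0(t_n)$ and $Q\widetilde a(t_n)$. It then applies Proposition~\ref{prop:FV_error}, Theorem~\ref{theo:DDRF_error} together with \eqref{eq:DDRF_projection_bound}, Theorem~\ref{theo:continuous_errors_regularization_shift}, and Theorem~\ref{theo:second_CN_error}, converting norms via $\|\cdot\|_h=\omega_h^{1/2}\|\cdot\|_2$ and $\|Qz\|_2=\|z\|_2$. Your added checks, that Theorem~\ref{theo:reduced_stability} supplies the dissipativity hypothesis for the Crank--Nicolson result and that $\omega_h^{1/2}$ is not reapplied to the already $h$-scaled FV and DDRF terms, are bookkeeping the paper leaves implicit.
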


\begin{proof}
	Insert the finite-volume, ideal projected, and continuous reduced solutions:
	\begin{equation}
		\mathcal{I}_hp(t_n)-Qa^n = \left( \mathcal{I}_hp(t_n)-\mathbf{p}_h(t_n)\right) 
		+ \left( \mathbf{p}_h(t_n)-\mathbf{p}_Q(t_n) \right)
		+ Q\left( a_0(t_n)-\widetilde a(t_n)\right)
		+ Q\left( \widetilde a(t_n)-a^n\right).
	\end{equation}
	The triangle inequality gives
	\begin{multline}
		\left\| \mathcal{I}_h p(t_n) - Qa^n \right\|_h
		\leq \underbrace{\left\| \mathcal{I}_h p(t_n) - \mathbf{p}_h(t_n) \right\|_h}_{\text{Proposition~\ref{prop:FV_error}}}
		+ \underbrace{\left\| \mathbf{p}_h(t_n) - \mathbf{p}_Q(t_n) \right\|_h}_{\text{Theorem~\ref{theo:DDRF_error}}} \\
		+ \underbrace{\left\| Q \left( a_0(t_n)-\widetilde a(t_n)\right)\right\|_h}_{\text{Theorem~\ref{theo:continuous_errors_regularization_shift}}}
		+ \underbrace{\left\| Q \left( \widetilde a(t_n) - a^n \right) \right\|_h}_{\text{Theorem~\ref{theo:second_CN_error}}}.
	\end{multline}
	On the uniform mesh, $\|\mathbf v\|_h=\omega_h^{1/2}\|\mathbf v\|_2$ and $\|Qz\|_2=\|z\|_2$. Apply Proposition~\ref{prop:FV_error}, Theorem~\ref{theo:DDRF_error}, Eq.~\eqref{eq:DDRF_projection_bound}, Theorem~\ref{theo:continuous_errors_regularization_shift}, and Theorem~\ref{theo:second_CN_error} to the four terms.
\end{proof}

\begin{corollary}[Steady-lifting estimate]\label{coro:affine_total_error}
	Suppose the continuous and semi-discrete affine problems admit time-independent steady liftings $\bar p$ and $\bar{\mathbf p}_h$, and let $\bar a$ satisfy
	\begin{equation}
		\widetilde B\bar a+Q^\top\mathbf b_h=0.
	\end{equation}
	Apply Theorem~\ref{theo:total_error} to the homogeneous deviations $u=p-\bar p$, $\mathbf u_h=\mathbf p_h-\bar{\mathbf p}_h$, and $c^n=a^n-\bar a$. If $\mathfrak R_n[\mathbf u_h,c]$ denotes its right-hand side, then
	\begin{equation}
		\|\mathcal I_hp(t_n)-Q(\bar a+c^n)\|_h
		\leq \mathfrak R_n[\mathbf u_h,c]+\varepsilon_{\mathrm{lift},h},
	\end{equation}
	where
	\begin{equation}
		\varepsilon_{\mathrm{lift},h}:=\|\mathcal I_h\bar p-Q\bar a\|_h
		\leq \|\mathcal I_h\bar p-\bar{\mathbf p}_h\|_h
		+\|\bar{\mathbf p}_h-Q\bar a\|_h.
	\end{equation}
\end{corollary}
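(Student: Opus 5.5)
The plan is to reduce the affine problem to the homogeneous setting of Theorem~\ref{theo:total_error} by subtracting steady states at every level, and then to pay for the lifting by a single triangle inequality.

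\textbf{Step 1: the homogeneous deviations.} Set $u=p-\bar p$. Since $\bar p$ is time-independent and solves the steady problem, $u$ satisfies the PDE~\eqref{eq:PDE} with homogeneous boundary data. Linearity of $\mathcal I_h$ gives $\mathcal I_h u=\mathcal I_h p-\mathcal I_h\bar p$. By Remark~\ref{rem:boundary_lifting}, $\mathbf u_h=\mathbf p_h-\bar{\mathbf p}_h$ solves $\dot{\mathbf u}_h=L_h\mathbf u_h$ with initial value $\mathbf p_h^0-\bar{\mathbf p}_h$.

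\textbf{Step 2: the reduced deviation.} Because $\widetilde B\bar a+Q^\top\mathbf b_h=0$ and $\mathbf b_h$ is constant, the affine Crank--Nicolson update can be rewritten. Subtract the identity
\begin{equation*}
\left(I_k-\tfrac{\Delta t}{2}\widetilde B\right)\bar a=\left(I_k+\tfrac{\Delta t}{2}\widetilde B\right)\bar a+\Delta t\,Q^\top\mathbf b_h
\end{equation*}
from the update. This shows that $c^n=a^n-\bar a$ satisfies the homogeneous Crank--Nicolson recursion. Likewise, the continuous reduced deviation $c(t)=\widetilde a(t)-\bar a$ satisfies $\dot c=\widetilde B c$. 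The reduced quantities entering the bound are thus exactly those of Theorem~\ref{theo:total_error}: $P_Q$, $B_0$, $\widetilde B$, $\eta_0$, $\eta_B$, $\alpha_{\mathrm{diss}}$.

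\textbf{Step 3: the main obstacle, applying Theorem~\ref{theo:total_error}.} The step needing care is confirming that the hypotheses of Theorem~\ref{theo:total_error} transfer to the deviation problem. Assumption~\ref{assump:FV_consistency} must hold for $u$; its residual is $\dot{\mathcal I_h u}-L_h\mathcal I_h u=\tau_h+L_h\mathcal I_h\bar p$. I will interpret the hypothesis ``apply Theorem~\ref{theo:total_error} to the homogeneous deviations'' as assuming these hypotheses for $u$, $\mathbf u_h$, $c$. The initial coordinate $c^0=a^0-\bar a$ must also be read as the corresponding regularized projection. I would state this interpretation explicitly rather than derive it, since $\mathfrak R_n[\mathbf u_h,c]$ is defined as that theorem's right-hand side for the deviations. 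Granting this, the theorem yields
\begin{equation*}
\|\mathcal I_h u(t_n)-Qc^n\|_h\leq\mathfrak R_n[\mathbf u_h,c].
\end{equation*}

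\textbf{Step 4: assembling the estimate.} Decompose
\begin{equation*}
\mathcal I_hp(t_n)-Q(\bar a+c^n)=\bigl(\mathcal I_hu(t_n)-Qc^n\bigr)+\bigl(\mathcal I_h\bar p-Q\bar a\bigr).
\end{equation*}
The triangle inequality then gives the main bound, with $\varepsilon_{\mathrm{lift},h}$ as the second term. A further triangle inequality, inserting $\bar{\mathbf p}_h$, splits $\varepsilon_{\mathrm{lift},h}$ into the stated steady finite-volume error and the reduced-equilibrium error.
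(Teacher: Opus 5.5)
Your proposal is correct and is essentially the paper's argument: the corollary has no separate proof beyond the recipe in its statement, namely the linear splitting $\mathcal I_hp(t_n)-Q(\bar a+c^n)=\bigl(\mathcal I_hu(t_n)-Qc^n\bigr)+\bigl(\mathcal I_h\bar p-Q\bar a\bigr)$, Theorem~\ref{theo:total_error} applied to the first piece, and a triangle inequality through $\bar{\mathbf p}_h$ on the second. Your explicit caveat that the theorem's hypotheses must be assumed for the deviations is well placed (the paper leaves this implicit), especially for the initial coordinate: with the implemented $a^0$, one has $c^0=(1+\eta_0)^{-1}Q^\top(\mathbf p_h^0-\bar{\mathbf p}_h)$ only if $\bar a=(1+\eta_0)^{-1}Q^\top\bar{\mathbf p}_h$, which fails in general.
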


\begin{remark}[Scope of the estimate]\label{rem:total_error_scope}
	Theorem~\ref{theo:total_error} controls cell averages on the finite-volume grid; a continuous $L^2(\Omega)$ estimate would additionally require a reconstruction operator. The closure quantity $\chi_Q$ depends on the full-order trajectory and is therefore a diagnostic term rather than an online a posteriori estimator. The constants may depend on mesh regularity, coefficient bounds, and reduced-operator norms, so no contrast-uniform estimate is claimed. For a pure Neumann problem, the affine corollary also requires the usual compatibility condition and a fixed pressure normalization. Time-dependent boundary forcing would introduce projection and quadrature terms not treated here.
\end{remark}

\section{Numerical Results}\label{sec:results}
The numerical experiments assess pressure accuracy, component-level robustness, performance for high-contrast permeability fields, three-dimensional scalability, and the correspondence between the theoretical error estimates and their realized numerical contributions. Unless otherwise stated, all errors are measured against finite-volume reference solutions on the same computational domain $\Omega$, with harmonic averaging used for the permeability coefficient. The quantitative assessment combines final-time pressure errors and the one-step stability radius with a componentwise error decomposition, a three-grid Richardson estimate of the finite-volume contribution, and a temporal refinement study.

The central low-permeability benchmark supports the baseline and ablation studies, the two high-permeability configurations test the adaptive material representation, and the three-dimensional extension examines the cost of the reduced workflow as the spatial dimension increases. Finally, the error-decomposition experiment closes the theory--computation loop by evaluating the realized pressure-space, regularization, stabilization, and time-discretization contributions, comparing them with their analytical bounds, and confirming the expected second-order convergence of the Crank--Nicolson update.

\subsection{Benchmark with a central low-permeability inclusion}
\label{sec:central_low_perm}

We first consider a central square inclusion occupying $\Omega_{\mathrm{in}}=\{(x,y):0.4<x<0.6,\ 0.4<y<0.6\}$. The coefficient field, storage coefficient, initial condition, and boundary condition are
\begin{equation}
	A(x,y)=
	\begin{cases}
		0.2, & (x,y)\in\Omega_{\mathrm{in}},\\
		1,   & (x,y)\in\Omega\setminus\Omega_{\mathrm{in}},
	\end{cases}
	\qquad
	S=1,
	\qquad
	p_0(x,y)=\sin(\pi x)\sin(\pi y),
	\qquad
	p|_{\partial\Omega}=0.  \notag
\end{equation}
Unless otherwise stated, the finite-volume reference solution is computed on a $65\times65$ grid with $T=0.05$ and $\Delta t=10^{-4}$.

The transient snapshots in Figure~\ref{fig:sol_central_low_perm_snapshot} show that the predicted pressure follows the reference decay and retains the distortion induced by the inclusion. The largest visible discrepancies remain concentrated within and around the low-permeability region. At the final time, the relative errors are $E_p^{L^2}=9.54\times10^{-4}$ and $E_p^{L^\infty}=1.08\times10^{-3}$.

\begin{figure}[pos=htbp]
	\centering
	\includegraphics[width=0.90\linewidth]{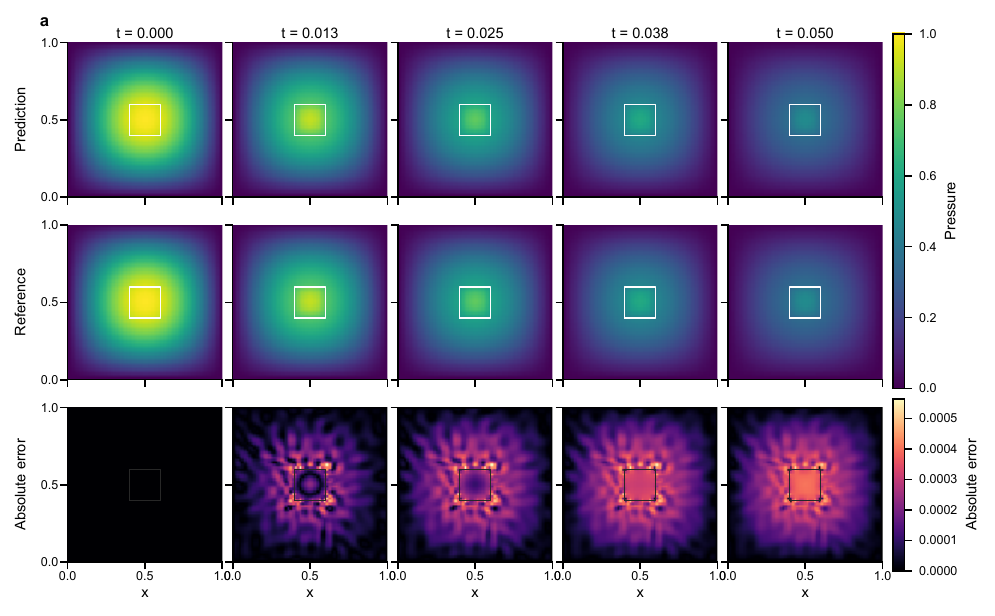}
	\caption{Central low-permeability benchmark ($A=0.2$ in the inclusion, $A=1$ elsewhere, and $S=1$). Finite-volume reference, DD RF-EDNN prediction, and absolute error at representative times on the $65\times65$ grid.}
	\label{fig:sol_central_low_perm_snapshot}
\end{figure}

We next compare the proposed method with a global RF-EDNN counterpart, an RFM solver~\cite{chen2022bridging,rahimi2007random}, PINN~\cite{raissi2019physics}, and XPINN~\cite{jagtap2020extended}. Figure~\ref{fig:baseline_final} presents the final-time spatial comparison; the PINN and XPINN training settings are documented in Appendix~\ref{app:pinn_xpinn_hyperparameters}.

\begin{figure}[pos=htbp]
	\centering
	\includegraphics[width=1.0\linewidth]{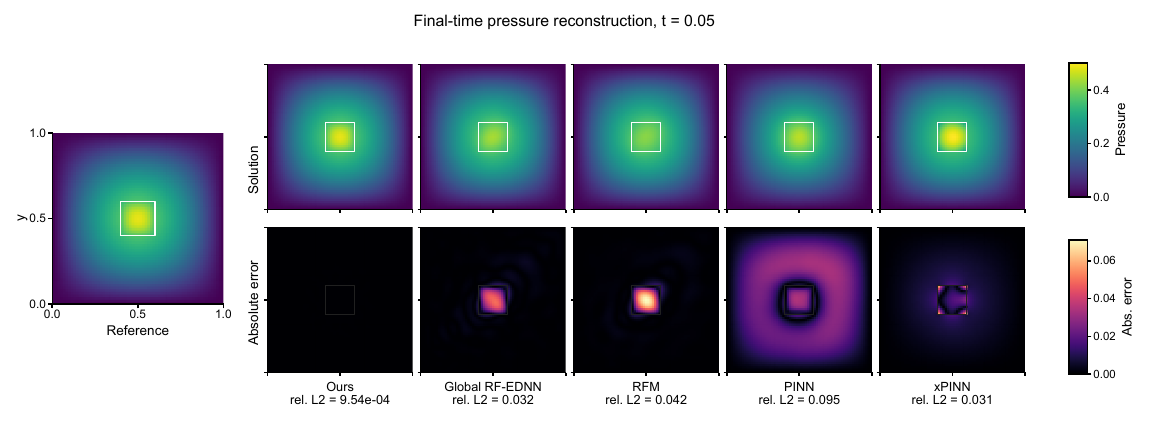}
	\caption{Final-time baseline comparison for the central low-permeability benchmark. The panels show the finite-volume reference, predictions from DD RF-EDNN and the four baselines, and their absolute errors at $T=0.05$.}
	\label{fig:baseline_final}
\end{figure}

The global RF-EDNN and XPINN recover the overall decay pattern, but their largest spatial discrepancies occur near the low-permeability region. The RFM and PINN predictions show broader deviations from the reference pressure. As reported in Table~\ref{tab:central_low_perm_baseline}, the proposed method gives the smallest final-time error among the tested implementations. Its relative $L^2$ error is more than one order of magnitude below those of the global RF-EDNN and XPINN, while the reduced update satisfies $\rho_{\Delta t}<1$.

\begin{table}[pos=htbp]
	\centering
	\caption{
		Final-time baseline comparison for the central low-permeability benchmark on the $65\times65$ grid.
	}
	\label{tab:central_low_perm_baseline}
	\begin{tabular}{lccccc}
		\toprule
		Method & Dimension & $E_p^{L^2}$ & $E_p^{L^\infty}$ & $\rho_{\Delta t}$ & Runtime (s) \\
		\midrule
		Ours & 469 & $9.54\times10^{-4}$ & $1.08\times10^{-3}$ & 0.9981 & 14.60 \\
		Global RF-EDNN & 600 & $3.16\times10^{-2}$ & $1.02\times10^{-1}$ & 1.0000 & 8.76 \\
		RFM & 600 & $4.25\times10^{-2}$ & $1.47\times10^{-1}$ & 0.9980 & 1.57 \\
		PINN & -- & $9.52\times10^{-2}$ & $7.10\times10^{-2}$ & -- & 69.41 \\
		XPINN & -- & $3.11\times10^{-2}$ & $1.30\times10^{-1}$ & -- & 165.52 \\
		\bottomrule
	\end{tabular}
\end{table}

Having established the accuracy of the proposed method against representative baselines, we next isolate the contribution of its main algorithmic components through ablation studies on the same benchmark.

\subsection{Ablation and Robustness Analysis}
\label{sec:ablation_robustness}

The ablation study uses the central low-permeability benchmark without changing its physical data. It isolates the effects of pressure-space orthogonalization, recurrent correction, random-feature sampling, feature-space size, coefficient contrast, and the grid/time-step setting.

\begin{figure}[pos=t]
	\centering
	\includegraphics[width=0.90\linewidth]{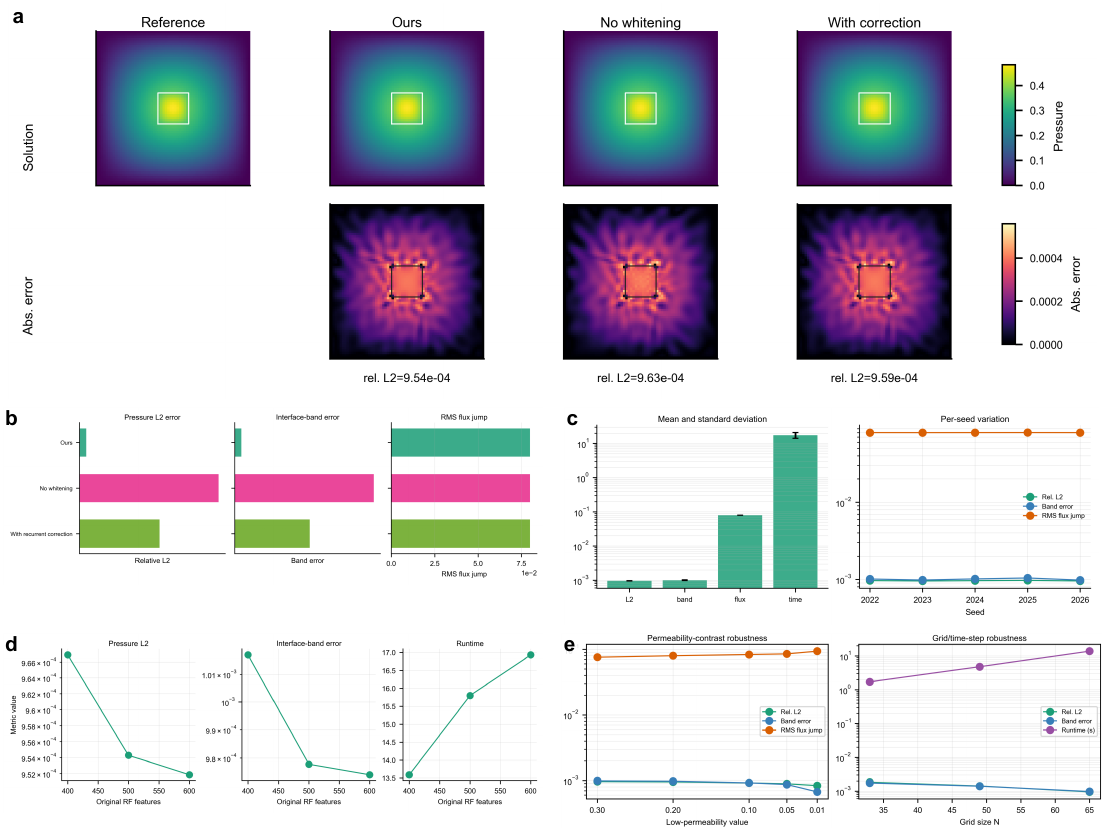}
	\caption{Ablation and robustness analysis for the central low-permeability benchmark. The panels examine pressure-space whitening, recurrent correction, random-feature sampling, feature-space size, permeability contrast, and grid/time-step settings.}
	\label{fig:ablation}
\end{figure}

Figure~\ref{fig:ablation} summarizes the study, Table~\ref{tab:ablation_robustness} retains representative cases, and Appendix~\ref{app:ablation_full} reports the complete grouped results. Before orthogonalization, the reduced feature matrix has $\kappa_2(\Phi_r)=6.82\times10^8$. The SVD pressure basis satisfies $Q^\top Q=I$ and therefore has $\kappa_2(Q)=1$ up to round-off error. Because the final-time pressure error remains below $10^{-3}$ in both coordinates, the empirical role of orthogonalization is to improve coordinate conditioning without materially changing the represented pressure field.

The optional recurrent correction also produces no improvement on this benchmark. It changes the final-time relative $L^2$ error only from $9.54\times10^{-4}$ to $9.59\times10^{-4}$, so the reported main configuration evolves the orthogonal pressure coordinates without this additional correction.

\begin{table}[pos=htbp]
	\centering
	\caption{
		Representative final-time ablation and robustness results for the central low-permeability benchmark.
	}
	\label{tab:ablation_robustness}
	\begin{tabular}{lcccc}
		\toprule
		Setting & Dimension & $E_p^{L^2}$ & $E_p^{L^\infty}$ & $\rho_{\Delta t}$ \\
		\midrule
		Ours & 469 & $9.54\times10^{-4}$ & $1.08\times10^{-3}$ & 0.9981 \\
		No whitening & 469 & $9.63\times10^{-4}$ & $1.15\times10^{-3}$ & 1.0000 \\
		With recurrent correction & 469 & $9.59\times10^{-4}$ & $1.08\times10^{-3}$ & 0.9981 \\
		Small RF space & 409 & $9.67\times10^{-4}$ & $1.34\times10^{-3}$ & 0.9981 \\
		Rich RF space & 528 & $9.52\times10^{-4}$ & $1.11\times10^{-3}$ & 0.9981 \\
		Strong contrast, $A_{\rm in}=0.01$ & 469 & $8.37\times10^{-4}$ & $5.44\times10^{-4}$ & 0.9995 \\
		Coarse grid, $33\times33$ & 349 & $1.83\times10^{-3}$ & $1.59\times10^{-3}$ & 0.9962 \\
		\bottomrule
	\end{tabular}
\end{table}

Random-feature robustness is assessed over five seeds. The resulting final-time errors satisfy
\begin{equation}
	\operatorname{mean}\!\left(E_p^{L^2}\right)=9.61\times10^{-4},
	\qquad
	\operatorname{std}\!\left(E_p^{L^2}\right)=6.69\times10^{-6}. \notag
\end{equation}
Changing the feature-space size from the smaller to the richer setting gives $E_p^{L^2}$ values between $9.52\times10^{-4}$ and $9.67\times10^{-4}$, with no systematic dependence on a particular seed or feature count.

The remaining tests vary the physical contrast and discretization. Decreasing the inclusion permeability from $A=0.3$ to $A=0.01$ keeps the final-time relative $L^2$ error below $10^{-3}$ and gives $\rho_{\Delta t}<1$ throughout the tested range. Refining the grid from $33\times33$ to $65\times65$, together with the reported time-step settings, reduces the pressure error from $1.83\times10^{-3}$ to $9.54\times10^{-4}$.

\subsection{High-contrast permeability configurations}
\label{sec:high_contrast}
We next consider isolated high-conductivity blocks and connected stripe-like pathways, both with $K_f/K_m=1000$. Figure~\ref{fig:high_contrast_permeability} shows the two coefficient fields; in both cases, the material mask is obtained directly from the permeability data.

\begin{figure}[pos=htbp]
	\centering
	\includegraphics[width=0.95\linewidth]{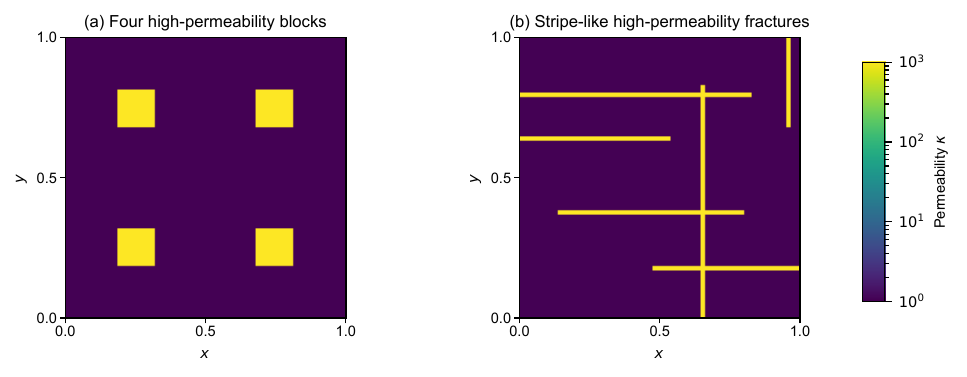}
	\caption{
		High-contrast permeability fields.
		Left: four high-permeability square inclusions with $K_m=1$ and $K_f=1000$.
		Right: stripe-like high-permeability fractures with $K_m=1$, $K_f=1000$, and six randomly generated fracture channels.
		The color scale is shown in logarithmic form.
	}
	\label{fig:high_contrast_permeability}
\end{figure}

The first configuration contains four high-permeability square inclusions embedded in a matrix with $K_m=1$. Its numerical parameters are grouped as
\begin{equation}
	K_f=1000,
	\qquad S=50,
	\qquad T=0.2,
	\qquad \Delta t=5\times10^{-4},
	\qquad N_x=N_y=65. \notag
\end{equation}
The increased storage coefficient slows the transient response over the longer time interval, and homogeneous Dirichlet data are imposed on $\partial\Omega$.

\begin{figure}[pos=htbp]
	\centering
	\includegraphics[width=1.0\linewidth]{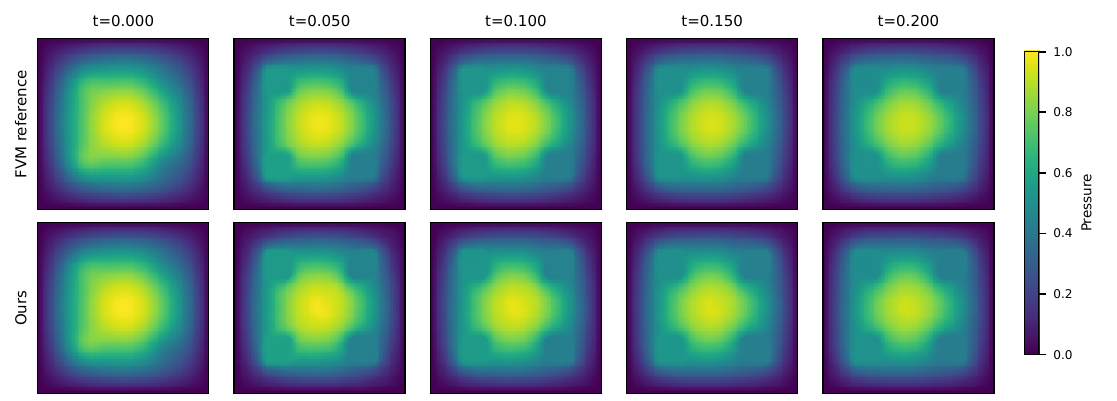}
	\caption{
		Four high-permeability block benchmark ($K_m=1$, $K_f=1000$, and $S=50$). Finite-volume reference and DD RF-EDNN prediction at representative times on the $65\times65$ grid.
	}
	\label{fig:four_inclusion_snapshot}
\end{figure}

The second configuration follows the channel-field construction used in KLE-based Darcy-flow datasets~\cite{li2026hybrid}. Six horizontal or vertical stripes are sampled with $K_m=1$ and $K_f=1000$. For cell centers $x_i$, the high-conductivity mask is initialized by
\begin{equation}
	\mathcal{M}_f=\left\{x_i:K(x_i)>K_{\mathrm{th}}\right\},
	\qquad
	K_{\mathrm{th}}=\sqrt{K_mK_f}, \notag
\end{equation}
and is then expanded by one grid layer to include neighboring cells. The same threshold-and-expansion rule is applied to the four-block field.

For the stripe test, $S=1$. To satisfy the nonhomogeneous left--right boundary data at $t=0$ while retaining a nontrivial interior perturbation, the initial pressure used in the computation is
\begin{equation}
	p_0(x,y)=1-x+0.25x(1-x)\sin^2(\pi y)\widehat\psi(x,y),
	\label{eq:stripe_initial_condition}
\end{equation}
where
\begin{equation}
	\widehat\psi(x,y)
	=
	\frac{g_1(x,y)+0.7g_2(x,y)}
	{\displaystyle\max_{(x_i,y_j)\in\Omega_h}
	\left[g_1(x_i,y_j)+0.7g_2(x_i,y_j)\right]},
	\notag
\end{equation}
with
\begin{equation}
	g_1=\exp\!\left[-\frac{(x-0.28)^2+(y-0.32)^2}{0.018}\right],
	\qquad
	g_2=\exp\!\left[-\frac{(x-0.72)^2+(y-0.66)^2}{0.026}\right].
	\notag
\end{equation}
Denoting the left, right, top, and bottom boundaries by $\Gamma_L$, $\Gamma_R$, $\Gamma_T$, and $\Gamma_B$, respectively, the boundary data are
\begin{equation}
	p|_{\Gamma_L}=1,
	\qquad
	p|_{\Gamma_R}=0,
	\qquad
	A\nabla p\cdot n=0
	\quad\text{on }\Gamma_T\cup\Gamma_B. \notag
\end{equation}
The problem is advanced to $T=0.05$ with $\Delta t=10^{-4}$ on a $65\times65$ grid.

\begin{figure}[pos=htbp]
	\centering
	\includegraphics[width=1.0\linewidth]{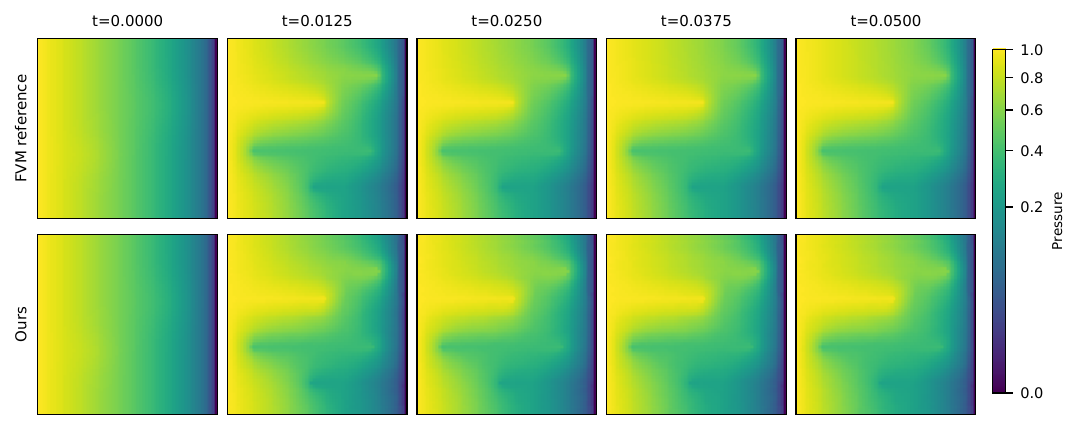}
	\caption{
		Stripe-like high-permeability fracture benchmark ($K_m=1$, $K_f=1000$, $n_f=6$, and $S=1$). Finite-volume reference and DD RF-EDNN prediction at representative times under left--right Dirichlet and top--bottom no-flow conditions.
	}
	\label{fig:stripe_fracture_snapshot}
\end{figure}

Figures~\ref{fig:four_inclusion_snapshot} and~\ref{fig:stripe_fracture_snapshot} show that the reduced solutions retain the principal pressure structures for both high-contrast fields. The final-time errors in Table~\ref{tab:high_contrast_results} are larger than in the central low-permeability benchmark, with the connected stripe field giving the larger $L^2$ and $L^\infty$ errors. Both reduced updates satisfy $\rho_{\Delta t}<1$.

\begin{table}[pos=htbp]
	\centering
	\setlength{\tabcolsep}{4pt}
	\small
	\caption{
		Final-time results for the high-contrast permeability tests.
	}
	\label{tab:high_contrast_results}
	\begin{tabular}{lcccccccc}
		\toprule
		Case & $K_f/K_m$ & $S$ & $T$ & $\Delta t$ & Dim. & $E_p^{L^2}$ & $E_p^{L^\infty}$ & $\rho_{\Delta t}$ \\
		\midrule
		Four high-$K$ blocks & $1000$ & $50$ & $0.2$ & $5.0{\times}10^{-4}$ & $486$ & $1.76{\times}10^{-2}$ & $5.38{\times}10^{-2}$ & 0.9998 \\
		Stripe-like fractures & $1000$ & $1$ & $0.05$ & $1.0{\times}10^{-4}$ & $492$ & $2.33{\times}10^{-2}$ & $8.07{\times}10^{-2}$ & 0.9951 \\
		\bottomrule
	\end{tabular}
\end{table}

\subsection{Three-dimensional scalability}
\label{sec:scalability_3d}

We finally evaluate the scalability of the proposed method on a three-dimensional extension of the central-inclusion benchmark. The computational domain is denoted by $\Omega$, and the permeability coefficient is set to $A=0.2$ inside the central cube and $A=1$ in the surrounding matrix. In the normalized mesh coordinates, the low-permeability cube occupies $0.4<x<0.6$, $0.4<y<0.6$, and $0.4<z<0.6$. The initial pressure is prescribed as

\begin{equation}
	p_0(x,y,z)=\sin(\pi x)\sin(\pi y)\sin(\pi z), \notag
\end{equation}
with homogeneous Dirichlet boundary conditions imposed on $\partial\Omega$. The final time is $T=0.02$, the time step is $\Delta t=5\times10^{-4}$, and the time integration is performed using the Crank--Nicolson scheme.
\begin{figure}[pos=htbp]
	\centering
	\includegraphics[width=1.0\linewidth]{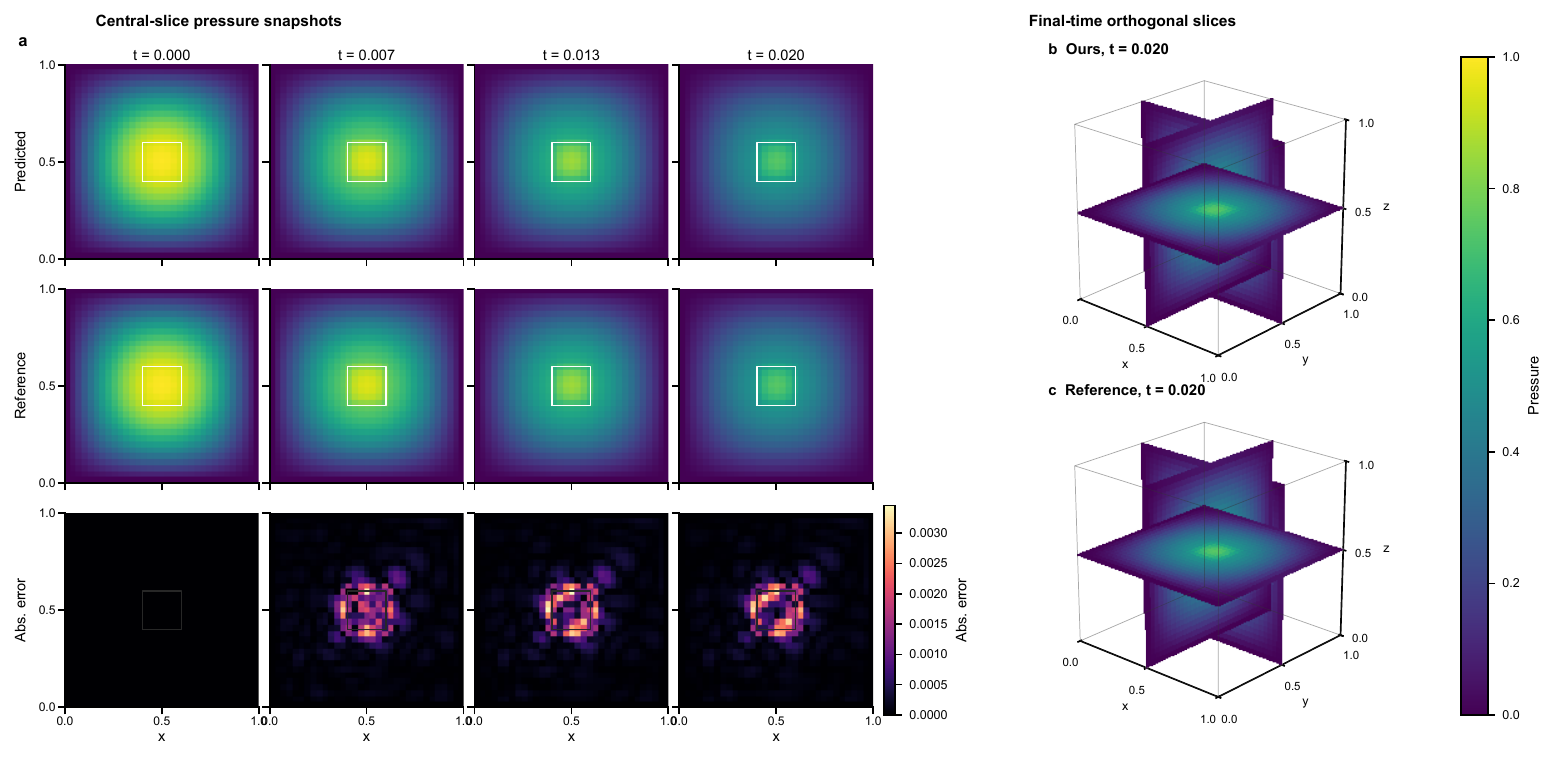}
	\caption{ Three-dimensional central low-permeability benchmark on the largest tested grid.
		The panels show pressure snapshots on the central slice, comparing Ours with the finite-volume reference solution at representative time levels, together with the corresponding absolute errors.
		The right panels show final-time orthogonal slices of the predicted and reference pressure fields.}
	\label{fig:scalability_3d_snapshots}
\end{figure}

On the largest grid, Figure~\ref{fig:scalability_3d_snapshots} shows close agreement between the predicted and reference pressure on the central slice. The visible error remains localized near the central cube, while the final-time orthogonal slices retain the three-dimensional pressure structure.

The scalability study uses grids of $17^3$, $25^3$, and $33^3$, corresponding to 3375, 12167, and 29791 interior degrees of freedom. Table~\ref{tab:scalability_3d} shows that $E_p^{L^2}$ remains on the order of $10^{-3}$ and $\rho_{\Delta t}<1$ at all three resolutions.

Figure~\ref{fig:scalability_3d_composite} separates the accuracy, runtime, and retained-dimension trends. The total runtime increases from 7.70~s to 86.04~s, whereas the reduced time-marching stage remains a small part of the full workflow. Most of the measured cost is associated with feature evaluation, basis construction, and operator projection.

\begin{table}[pos=htbp]
	\centering
	\caption{
		Three-dimensional scalability results at $T=0.02$.
	}
	\label{tab:scalability_3d}
	\begin{tabular}{ccccccc}
		\toprule
		Grid & DoF & Dimension & $E_p^{L^2}$ & $E_p^{L^\infty}$ & $\rho_{\Delta t}$ & Runtime (s) \\
		\midrule
		$17^3$ & 3375  & 500 & $6.57\times10^{-4}$ & $3.10\times10^{-3}$ & 0.9855 & 7.70 \\
		$25^3$ & 12167 & 750 & $1.21\times10^{-3}$ & $6.97\times10^{-3}$ & 0.9855 & 37.48 \\
		$33^3$ & 29791 & 900 & $1.40\times10^{-3}$ & $7.68\times10^{-3}$ & 0.9855 & 86.04 \\
		\bottomrule
	\end{tabular}
\end{table}

\begin{figure}[pos=htbp]
	\centering
	\includegraphics[width=1.0\linewidth]{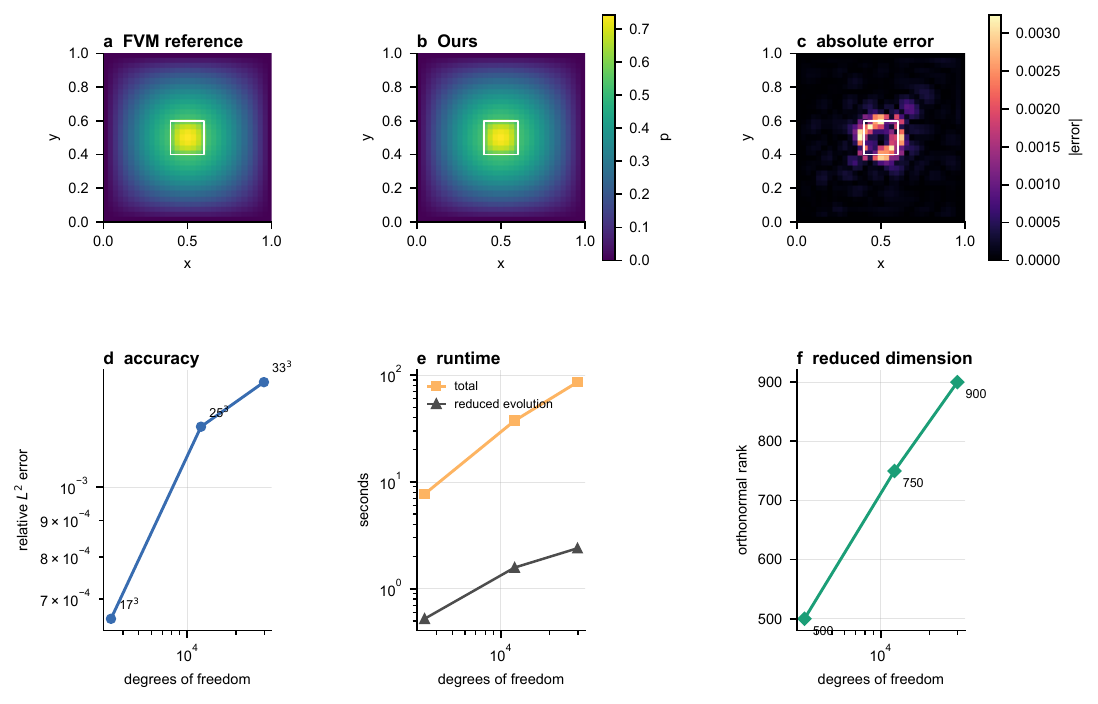}
	\caption{Accuracy and scalability of the three-dimensional benchmark.
		The top panels compare the finite-volume reference solution, Ours, and the absolute error on the final-time central slice.
		The bottom panels report the relative $L^2$ error, runtime, and retained reduced dimension as the number of spatial degrees of freedom increases.}
	\label{fig:scalability_3d_composite}
\end{figure}

\subsection{Error Decomposition and Numerical Verification}
\label{sec:error_decomposition_validation}

\begin{figure}[pos=htbp]
	\centering
	\includegraphics[width=1.0\linewidth]{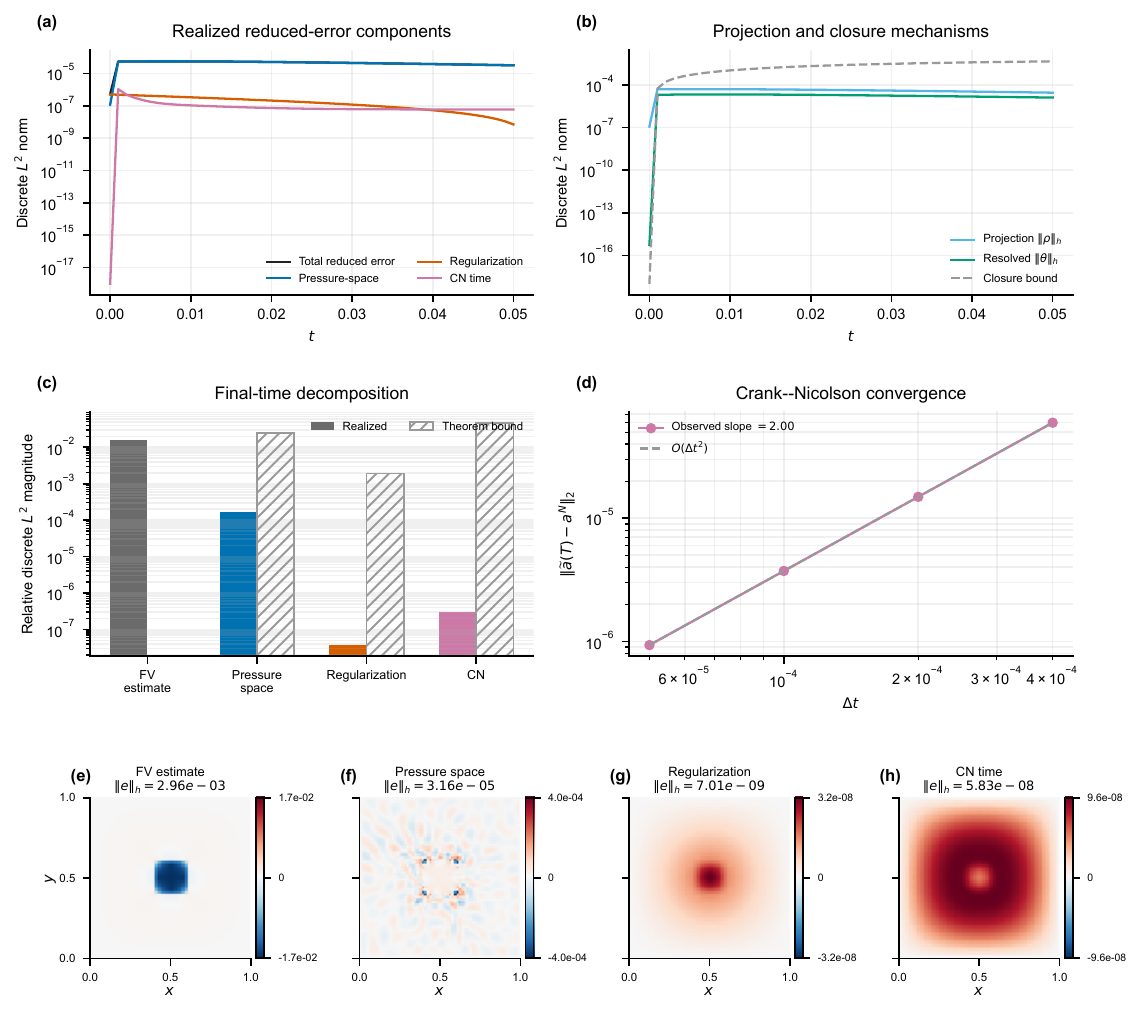}
	\caption{\textbf{Error decomposition and numerical verification.}
		(a) Reduced-error components over time.
		(b) Projection and closure quantities.
		(c) Final-time errors and analytical bounds, with the finite-volume term estimated by Richardson extrapolation.
		(d) Crank--Nicolson temporal refinement.
		(e--h) Signed spatial estimates of the four error components, shown with independent symmetric color scales.}
	\label{fig:error_decomposition_validation}
\end{figure}

We next quantify the individual contributions in the total error estimate of
Theorem~\ref{theo:total_error} for the central low-permeability benchmark.
The calculation uses the same $65\times65$ grid, final time $T=0.05$, and
time step $\Delta t=10^{-4}$ as the main experiment. The orthogonal pressure
basis contains 469 coordinates, the initial-state and reduced-operator
regularization parameters are $\eta_0=\eta_B=10^{-6}$, and no additional
dissipativity shift is activated, i.e., $\alpha_{\mathrm{diss}}=0$. To isolate the errors introduced
at different stages, the finite-volume semi-discrete trajectory and the two
continuous reduced trajectories are evaluated by matrix exponentials, whereas
the reported fully discrete reduced trajectory is advanced by
Crank--Nicolson.

\begin{table}[pos=ht]
	\centering
	\caption{Final-time normalized error decomposition for the central low-permeability benchmark. Effectivity is the analytical bound divided by the realized magnitude; the total estimate includes the empirical finite-volume contribution.}
	\label{tab:error_decomposition_validation}
	\small
	\begin{tabular}{lccc}
		\toprule
		Component
		& Realized relative magnitude
		& Upper estimate
		& Effectivity \\
		\midrule
		Finite-volume estimate
		& $1.568\times10^{-2}$ & -- & -- \\
		Pressure space and closure
		& $1.670\times10^{-4}$ & $2.461\times10^{-2}$ & $1.47\times10^{2}$ \\
		Regularization and dissipativity shift
		& $3.711\times10^{-8}$ & $1.928\times10^{-3}$ & $5.19\times10^{4}$ \\
		Crank--Nicolson time integration
		& $3.085\times10^{-7}$ & $4.743\times10^{-2}$ & $1.54\times10^{5}$ \\
		Estimated total
		& $1.565\times10^{-2}$ & $8.965\times10^{-2}$ & -- \\
		\bottomrule
	\end{tabular}
\end{table}

Because the discontinuous-coefficient problem has no closed-form continuum
solution, the first term cannot be evaluated exactly. We therefore construct
a continuum surrogate $\mathbf p_R(T)$ from the nested grids
$33\times33$, $65\times65$, and $129\times129$ using full weighting and
three-grid Richardson extrapolation. At the final time, the resulting
component identity is
\begin{equation}
	\widehat{\mathbf e}_{\rm tot}
	=
	\underbrace{\left(\mathbf p_R-\mathbf p_h\right)}_{\widehat{\mathbf e}_{\rm FV}}
	+
	\underbrace{\left(\mathbf p_h-\mathbf p_Q\right)}_{\mathbf e_Q}
	+
	\underbrace{Q\left(a_0-\widetilde a\right)}_{\mathbf e_B}
	+
	\underbrace{Q\left(\widetilde a-a^N\right)}_{\mathbf e_{\Delta t}},
	\label{eq:numerical_error_decomposition}
\end{equation}
where all quantities in Eq.~\eqref{eq:numerical_error_decomposition} are
evaluated at $T$. We report
$\widehat{\mathcal E}_i=\|\mathbf e_i\|_h/\|\mathbf p_R\|_h$ for each
component. The two successive grid discrepancies are
$4.16\times10^{-3}$ and $1.73\times10^{-3}$, giving an observed spatial
rate of $1.27$. The finite-volume contribution should consequently be
interpreted as an empirical discretization-error estimate rather than an
exact continuum error or a rigorous a posteriori bound.

Figure~\ref{fig:error_decomposition_validation}(a) shows that the
pressure-space term is the dominant error introduced by the reduced model
throughout most of the transient interval. At $T=0.05$, its relative
magnitude is $1.67\times10^{-4}$, whereas the realized regularization and
Crank--Nicolson contributions are $3.711\times10^{-8}$ and
$3.085\times10^{-7}$, respectively. Once the estimated finite-volume term is
included, the estimated total relative error becomes
$1.565\times10^{-2}$ and is dominated by the spatial discretization estimate.
The total is slightly smaller than the finite-volume component alone because
Eq.~\eqref{eq:numerical_error_decomposition} is a signed vector identity and
the component norms need not add without cancellation.

All three analytical estimates exceed their corresponding realized
components, in agreement with Theorems~\ref{theo:DDRF_error},%
~\ref{theo:continuous_errors_regularization_shift}, and~\ref{theo:second_CN_error}.
Their large effectivity indices show that the
estimates are conservative rather than sharp, which is expected because the
proofs repeatedly use triangle inequalities, contractivity estimates, and
time-uniform derivative bounds. The spatial maps in
Figure~\ref{fig:error_decomposition_validation}(e--h) further distinguish
the mechanisms: the estimated finite-volume error is concentrated in the
low-permeability inclusion, the pressure-space term is localized near the
coefficient transition, and the two remaining components have substantially
smaller absolute magnitudes.

The algebraic diagnostics agree with the formal decomposition. In particular,
$\|Q^\top Q-I\|_2=5.80\times10^{-15}$, while the operator identity, initial
identity, and final vector-decomposition residuals are zero to the reported
precision. Finally, the temporal refinement study with
$\Delta t\in\{4,2,1,0.5\}\times10^{-4}$ gives an observed order of $2.00$,
as shown in Figure~\ref{fig:error_decomposition_validation}(d), providing a
direct numerical verification of the second-order Crank--Nicolson estimate.

\section{Discussion}\label{sec:discussion}

The results indicate that a material-adapted random-feature space can serve as an effective trial space for reduced transient Darcy dynamics. The proposed framework combines this space with a conservative finite-volume operator and evolves only its orthogonal pressure coordinates. This construction differs from a space--time neural surrogate because the temporal problem remains an explicit reduced initial-value problem. It also differs from a snapshot-based reduced model because the basis is generated from the permeability field and randomized spatial features before a transient reference trajectory is computed. Across the central low-permeability benchmark, two high-conductivity configurations, and the three-dimensional test, the same construction produced contractive reduced updates and retained the principal pressure structures. These observations support the use of fixed randomized features as an operator-coupled reduced space, provided that their spatial allocation reflects the coefficient field.

The baseline and ablation results suggest that the principal accuracy gain arises from the design of the pressure space rather than from additional temporal correction. On the central benchmark, the material-adapted construction reduced the final-time error by more than one order of magnitude relative to the global RF-EDNN and RFM configurations in Table~\ref{tab:central_low_perm_baseline}. This comparison indicates that increasing the number of unstructured global features is not equivalent to assigning approximation capacity according to the permeability distribution. The two-stage singular-value procedure has a different role. It reduces redundancy and replaces a poorly conditioned compressed feature matrix, with $\kappa_2(\Phi_r)=6.82\times10^8$, by orthogonal pressure coordinates. The nearly unchanged pressure error in the no-whitening run shows that orthogonalization does not enlarge the represented pressure space or automatically improve its best approximation. Instead, it makes the coordinate representation well defined and simplifies operator projection, stability analysis, and time integration. Here $\kappa_2(Q)=1$ is an algebraic property of the orthonormal basis; it is not the condition number of the finite-volume problem or the reduced evolution operator. The recurrent correction produced no measurable benefit in this linear benchmark, while the seed and feature-count studies showed limited variation over the tested ranges. Together, these findings support the simpler fixed-coordinate evolution used in the main configuration.

The observed temporal behavior is consistent with the inherited-stability mechanism established in Section~\ref{sec:theory}. Projection of a dissipative finite-volume generator onto an orthogonal basis preserves the nonpositive symmetric part, as stated in Theorem~\ref{theo:reduced_stability}. The logarithmic-norm correction controls the same energy quantity when roundoff or truncation perturbs this structure, including for nonnormal reduced matrices. Crank--Nicolson then gives a contractive update under the resulting condition by Theorem~\ref{theo_well_posedness}. All reported values of $\rho_{\Delta t}$ are below one. More importantly, no dissipativity shift was activated in the central error-decomposition experiment, so its stability was inherited from the projected operator rather than imposed by artificial damping. The stability radius remains a diagnostic of the discrete linear update and is not a complete stability characterization for other spatial discretizations, time-dependent operators, or nonlinear flow models.

The componentwise error experiment provides a practical hierarchy for improving the method. Among the errors introduced after finite-volume discretization, the pressure-space and closure contribution is dominant. The realized effects of regularization, dissipativity correction, and Crank--Nicolson integration are several orders of magnitude smaller. Once the empirical finite-volume contribution is included, however, the estimated spatial-discretization error dominates the total error at the final time. This distinction matters: the first comparison diagnoses the reduced model, whereas the second concerns the complete PDE approximation. For the present parameter regime, further reduction of $\Delta t$ or fine adjustment of the already small ridge parameters is therefore unlikely to provide a meaningful improvement. Greater benefit should come from refining the finite-volume grid, enriching the random-feature space in regions with large projection or closure residuals, or adapting the retained rank. The second-order temporal refinement result supports this interpretation. The analytical quantities in Table~\ref{tab:error_decomposition_validation} are conditional upper controls rather than online estimators: the closure term uses the full-order trajectory, and the large effectivity indices show that the bounds are not sharp predictors.

The high-contrast tests show how the same construction extends beyond the central inclusion geometry. Both coefficient fields have $K_f/K_m=1000$, and their masks are obtained directly from the permeability data. The reduced solutions preserve the dominant pressure patterns for isolated high-conductivity blocks and connected stripe-like pathways. Their final-time errors are higher than that of the central benchmark, which indicates a more demanding representation problem when the coefficient field contains extreme and spatially extended conductive structures. The connected stripe case gives the larger error, but this difference cannot be attributed to connectivity alone. The two tests also use different storage coefficients, final times, initial states, and boundary conditions. The comparison is therefore evidence of applicability across distinct configurations rather than a controlled study of geometric complexity. A systematic assessment of topology, contrast, and boundary forcing would require varying these factors independently.

The three-dimensional experiment further demonstrates that the algebraic workflow is not restricted to a two-dimensional feature construction. Relative $L^2$ errors remain on the order of $10^{-3}$ up to 29,791 interior degrees of freedom, and the reduced update remains stable at all three resolutions. The timing breakdown also confirms the intended offline--online separation: reduced time marching contributes only a small fraction of the total runtime, while feature evaluation, dense basis construction, and operator projection dominate. The resulting decomposition is intended for repeated transient queries defined by the same permeability field and finite-volume operator. Within this setting, the basis, reduced operator, and Crank--Nicolson factorization can be reused for compatible initial states or boundary histories. The retained dimension nevertheless grows from 500 to 900, and the total runtime rises from 7.70~s to 86.04~s. The present results should therefore be interpreted as evidence of three-dimensional feasibility at moderate scale, not as an asymptotic scalability result.

Several boundaries define the current scope of the framework. The analysis assumes a constant storage coefficient, a positive piecewise-constant scalar permeability, a uniform Cartesian finite-volume grid, and a dissipative semi-discrete generator. It directly covers homogeneous dynamics and time-independent nonhomogeneous data that admit a steady lifting; genuinely time-dependent boundary forcing would introduce additional projection and quadrature errors. The numerical study uses synthetic coefficient fields, and its continuum-level finite-volume contribution is estimated by three-grid Richardson extrapolation because no closed-form solution is available. Seed robustness is examined systematically only for the central two-dimensional case, and the offline--online statements are restricted to a fixed spatial operator. These boundaries point to concrete extensions. Randomized or incremental singular-value algorithms could reduce the offline bottleneck, while residual-driven feature allocation and rank selection could target the dominant pressure-space contribution identified by the error decomposition. Broader validation on irregular grids, problems with time-dependent forcing, and larger three-dimensional geological configurations would further test the range of the framework. Sharper computable error estimates would also strengthen its use in accuracy-controlled simulations.

\section{Conclusions}\label{sec:conclusions}

This work developed a material-adapted random-feature reduced framework for transient Darcy problems with discontinuous and high-contrast permeability fields. The pressure space is constructed directly from the permeability distribution by assigning randomized spatial features to distinct material regions. A two-stage singular-value procedure then removes redundant directions and produces an orthogonal pressure basis. Projection of the conservative finite-volume generator onto this basis yields a compact initial-value problem for the pressure coordinates, while the spatial random features remain fixed. The resulting system can be advanced by standard temporal schemes; Crank--Nicolson was used here to obtain a second-order contractive update under the stated dissipativity assumptions. This construction avoids a database of full-order transient snapshots during basis generation and retains a direct algebraic connection to the underlying finite-volume dynamics.

The theoretical and numerical results provide complementary evidence for the proposed construction. The analysis establishes inherited dissipativity of the projected operator, a deterministic DD-RF compression bound, unconditional contractivity of the Crank--Nicolson update, and a conditional grid-level error decomposition. The latter separates finite-volume consistency, dictionary and SVD truncation, dynamical closure, operator perturbation, and temporal integration; a steady-lifting corollary accounts for time-independent affine data. On the central low-permeability benchmark, the method attained a final-time relative $L^2$ error of $9.54\times10^{-4}$ and gave the lowest error among the tested implementations. The ablation study showed that orthogonalization primarily improves coordinate conditioning, while the pressure accuracy remained robust over the tested random seeds and feature counts. The two configurations with $K_f/K_m=1000$ retained the principal pressure structures for isolated blocks and connected conductive pathways, with relative errors on the order of $10^{-2}$. The three-dimensional study maintained errors on the order of $10^{-3}$ up to 29,791 interior degrees of freedom. Finally, the numerical error decomposition identified the pressure space as the dominant source of reduced-model error and the empirical finite-volume contribution as the dominant term in the complete error budget. The observed temporal order of $2.00$ was consistent with the Crank--Nicolson estimate.

The present conclusions are bounded by the assumptions and experiments considered here. The analysis treats a constant storage coefficient, a positive piecewise-constant scalar permeability, a uniform Cartesian finite-volume grid, and homogeneous dynamics or time-independent boundary data that admit a steady lifting. It controls finite-volume cell averages and uses a closure quantity evaluated from the full-order trajectory; it is therefore neither a continuous reconstruction estimate nor an online error estimator. No coefficient-contrast-uniform constant or probabilistic rate in the number of random features is claimed. The offline--online decomposition is designed for repeated transient queries with a fixed permeability field and spatial operator, including compatible changes in the initial state or boundary history. Natural extensions include randomized or incremental singular-value algorithms for reducing the offline cost and residual-driven feature allocation and rank selection for controlling the dominant pressure-space error. Further studies on irregular grids, genuinely time-dependent forcing, and larger three-dimensional geological configurations would help establish the range of operator-coupled random-feature evolution for conservative subsurface flow simulation.

\appendix
\section{Auxiliary Proofs}
\label{app:auxiliary_proofs}

This appendix collects routine algebraic arguments used in Section~\ref{sec:theory}. Their statements remain in the main text so that the analytical dependence is visible without interrupting the principal stability and error estimates.

\subsection{Orthogonal projection}

\begin{proof}[Proof of Lemma~\ref{lemma:orthogonal_projection}]
Using $Q^\top Q=I_k$ gives $P_Q^2=QQ^\top QQ^\top=P_Q$ and $P_Q^\top=P_Q$. Hence $P_Q$ is the Euclidean orthogonal projector onto $\mathcal V_Q$. For any $\mathbf w\in\mathcal V_Q$,
\begin{equation}
	\|\mathbf v-\mathbf w\|_2^2
	=
	\|\mathbf v-P_Q\mathbf v\|_2^2
	\|P_Q\mathbf v-\mathbf w\|_2^2.
\end{equation}
The second term is nonnegative and vanishes for $\mathbf w=P_Q\mathbf v$, proving the best-approximation identity.
\end{proof}

\subsection{Ridge formulas}

\begin{proof}[Proof of Lemma~\ref{lemma:regularized_initial_projection}]
The gradient of
\begin{equation}
	\mathcal J_0(a)=\|Qa-\mathbf p_h^0\|_2^2+\eta_0\|a\|_2^2
\end{equation}
is $2(1+\eta_0)a-2Q^\top\mathbf p_h^0$. Its positive-definite Hessian gives the unique minimizer
\begin{equation}
	a_{\eta_0}^0=(1+\eta_0)^{-1}Q^\top\mathbf p_h^0.
\end{equation}
Therefore $Qa_{\eta_0}^0=(1+\eta_0)^{-1}P_Q\mathbf p_h^0$, which yields the stated regularization error.
\end{proof}

\begin{proof}[Proof of Lemma~\ref{lemma:operator_regression}]
With $Q^\top Q=I_k$, differentiation of the Frobenius-norm objective gives
\begin{equation}
	\nabla_B\mathcal J_B(B)=2(1+\eta_B)B-2Q^\top L_hQ.
\end{equation}
The Hessian is positive definite in vectorized coordinates. Thus the unique stationary point is
\begin{equation}
	B_{\eta_B}=(1+\eta_B)^{-1}Q^\top L_hQ=(1+\eta_B)^{-1}B_0.
\end{equation}
\end{proof}

\subsection{Auxiliary stability bounds}

\begin{proof}[Proof of Corollary~\ref{coro:semigroup_contractivity}]
Apply the energy argument in Theorem~\ref{theo:reduced_stability} to $B_0$, $B_{\eta_B}$, and $\widetilde B$. Taking the supremum over nonzero initial data gives the three induced-norm bounds. If $\widetilde Bv=\lambda v$, then
\begin{equation}
	2\operatorname{Re}(\lambda)\|v\|_2^2
	=v^*(\widetilde B+\widetilde B^*)v
	\leq-2\delta\|v\|_2^2,
\end{equation}
and hence $\operatorname{Re}(\lambda)\leq-\delta$.
\end{proof}

\begin{proof}[Proof of Corollary~\ref{coro:closure_coupling}]
The induced operator norm gives
\begin{equation}
	\chi_Q(t)
	\leq
	\|P_QL_h(I-P_Q)\|_{2\to2}
	\|(I-P_Q)\mathbf p_h(t)\|_2
	=\gamma_Q\varepsilon_Q(t).
\end{equation}
Substitution into Theorem~\ref{theo:DDRF_error} yields the second estimate.
\end{proof}

\section{Random-Feature Reproducibility}
\label{app:rf_reproducibility}

This appendix records the implementation-level parameters required to reproduce the three two-dimensional experiments reported in Sections~\ref{sec:central_low_perm} and~\ref{sec:high_contrast}. The random features are sampled once during the offline stage and remain fixed throughout the reduced evolution. All draws use NumPy's \texttt{default\_rng} generator with the case-specific seeds in Tables~\ref{tab:rf_central_sampling} and~\ref{tab:rf_high_contrast_parameters}, and all linear algebra is performed in double precision. The finite-volume reference trajectories use the same Crank--Nicolson formulation as in the main text with a refined reference step $\Delta t_{\rm ref}=\Delta t/4$.

\begin{table}[pos=htbp]
	\centering
	\caption{Random-feature construction and truncation rules for the two-dimensional experiments. Here $m$ is the expanded material mask, and $\Phi_s$ contains the first $M_s$ base-feature columns.}
	\label{tab:rf_common_rules}
	\small
	\setlength{\tabcolsep}{4pt}
	\begin{tabular}{p{0.20\textwidth}p{0.70\textwidth}}
		\toprule
		Component & Reproducible implementation rule \\
		\midrule
		Activation and boundary factor &
		$\sigma(s)=\tanh(s)$ and $b_\Omega(x,y)=x(1-x)y(1-y)$. \\
		High-contrast base features &
		$\phi_j(x)=b_\Omega(x)\tanh(w_j^\top x+\beta_j)$, with independent $w_j\sim\mathcal N(0,3^2I_2)$ and $\beta_j\sim\mathcal U[-2,2]$. \\
		Mask enrichment &
		$\Phi_0=[\Phi,\,m\Phi_s,\,(1-m)\Phi_s]$. The first $M_s=\operatorname{round}(\vartheta M)$ base features are duplicated, where $M$ is the base count and $\vartheta$ is the split fraction. \\
		High-contrast mask &
		Cells satisfying $K\geq\sqrt{K_mK_f}$ are selected and the resulting binary mask is expanded by one Manhattan grid layer. \\
		SVD truncation &
		A singular vector is retained when $\sigma_j>\tau_{\rm svd}\sigma_1$; the retained dimension is additionally limited by the case-specific rank cap. \\
		Dissipativity correction &
		$\mu_2(B)=\lambda_{\max}((B+B^\top)/2)$ and $\alpha_{\mathrm{diss}}=\max\{0,\mu_2(B)+\delta\}$ with $\delta=10^{-4}$, followed by $\widetilde B=B-\alpha_{\mathrm{diss}}I$. \\
		\bottomrule
	\end{tabular}
\end{table}

For the central low-permeability benchmark, the known square geometry is used to assign separate feature groups. Coordinates inside the square are mapped to $((x-0.5)/0.1,(y-0.5)/0.1)$, whereas the outer coordinates are mapped to $(2x-1,2y-1)$. The complete sampling specification is given in Table~\ref{tab:rf_central_sampling}. No recurrent correction is applied.

\begin{table}[pos=htbp]
	\centering
	\caption{Random-feature sampling and compression parameters for the central low-permeability benchmark. The side-localized count is specified per side and per material region, giving eight groups in total.}
	\label{tab:rf_central_sampling}
	\small
	\setlength{\tabcolsep}{4pt}
	\begin{tabular}{p{0.28\textwidth}p{0.18\textwidth}p{0.44\textwidth}}
		\toprule
		Quantity & Value & Sampling or implementation detail \\
		\midrule
		Random seed & 2026 & One generator is advanced in the row order shown below. \\
		Inner-region features & 200 & $w\sim\mathcal N(0,1.8^2I_2)$, $\beta\sim\mathcal U[-2,2]$. \\
		Outer-region features & 240 & $w\sim\mathcal N(0,3^2I_2)$, $\beta\sim\mathcal U[-2,2]$. \\
		Side-localized features & $15$ per group & $w_\tau\sim\mathcal N(0,2.5^2)$, $w_n\sim\mathcal N(0,1.5^2)$, $\beta\sim\mathcal U[-2,2]$. \\
		Localization length & $\ell_\Gamma=0.055$ & Gaussian normal window $\exp[-(\rho_\Gamma/\ell_\Gamma)^2]$. \\
		Original feature count & $M_0=560$ & $200+240+8\times15$. \\
		First SVD & $\tau_{\rm svd}=10^{-10}$ & Rank cap $r\leq500$. \\
		Pressure-space SVD & $\tau_Q=10^{-12}$ & Retained pressure dimension $k=469$. \\
		Regularization & $\eta_0=\eta_B=10^{-6}$ & Applied to the initial projection and reduced operator. \\
		Time evolution & Crank--Nicolson & $\Delta t=10^{-4}$, $T=0.05$; recurrent correction disabled. \\
		\bottomrule
	\end{tabular}
\end{table}

The two high-contrast tests use the common masked construction in Table~\ref{tab:rf_common_rules}. Table~\ref{tab:rf_high_contrast_parameters} reports both the prescribed inputs and the resulting retained dimensions. For the stripe field, the same seed is used for permeability generation and feature sampling; the generator for each operation is initialized independently.

\begin{table}[pos=htbp]
	\centering
	\caption{Case-specific random-feature and evolution parameters for the two high-contrast benchmarks. A dash denotes a deterministic permeability geometry.}
	\label{tab:rf_high_contrast_parameters}
	\footnotesize
	\setlength{\tabcolsep}{4.5pt}
	\begin{tabular}{lcc}
		\toprule
		Parameter & Four high-$K$ blocks & Stripe-like fractures \\
		\midrule
		Grid & $65\times65$ & $65\times65$ \\
		Permeability data & $K_m=1$, $K_f=1000$ & $K_m=1$, $K_f=1000$, $n_f=6$ \\
		Geometry & Centers $\{1/4,3/4\}^2$, half-width $0.075$ & Axis-aligned random channels \\
		Storage coefficient & $S=50$ & $S=1$ \\
		$T$; $\Delta t$ & $0.2$; $5\times10^{-4}$ & $0.05$; $10^{-4}$ \\
		Permeability seed & -- & 2026 \\
		Feature seed & 2041 & 2026 \\
		Base feature count $M$ & 1000 & 900 \\
		Split fraction $\vartheta$ & 0.45 & 0.45 \\
		Duplicated count $M_s$ & 450 & 405 \\
		Total feature count $M_0$ & 1900 & 1710 \\
		Mask expansion & One grid layer & One grid layer \\
		SVD tolerance $\tau_{\rm svd}$ & $10^{-10}$ & $10^{-10}$ \\
		Rank cap & 800 & 700 \\
		Retained dimension $k$ & 486 & 492 \\
		Initial/operator ridge & 0 & 0 \\
		Stability margin $\delta$ & $10^{-4}$ & $10^{-4}$ \\
		Realized dissipativity shift $\alpha_{\mathrm{diss}}$ & 0 & 0 \\
		Time integrator & Crank--Nicolson & Crank--Nicolson \\
		\bottomrule
	\end{tabular}
\end{table}

\clearpage
\section{Complete Ablation and Robustness Results}\label{app:ablation_full}

Tables~\ref{tab:app_ablation_structure}--\ref{tab:app_ablation_grid_time} report the complete set of ablation and robustness runs used to support Section~\ref{sec:ablation_robustness}. The compact table in the main text lists representative cases, whereas the appendix separates the full results by study type for readability. In the random-seed and contrast studies, $E_{\rm band}$ denotes the relative $L^2$ pressure error restricted to the material-transition neighborhood used for localized error monitoring.

\begin{table}[pos=htbp]
	\centering
	\caption{Structural ablation results on the central low-permeability benchmark. The condition number is reported for the reduced coordinate system used in each variant.}
	\label{tab:app_ablation_structure}
	\footnotesize
	\setlength{\tabcolsep}{3.2pt}
	\begin{tabular}{lcccccc}
		\toprule
		Setting & Dimension & Cond. & $E_p^{L^2}$ & $E_p^{L^\infty}$ & $\rho_{\Delta t}$ & Runtime (s) \\
		\midrule
		Ours & 469 & $1.00\times10^{0}$ & $9.54\times10^{-4}$ & $1.08\times10^{-3}$ & 0.9981 & 22.87 \\
		No whitening & 469 & $6.82\times10^{8}$ & $9.63\times10^{-4}$ & $1.15\times10^{-3}$ & 1.0000 & 22.99 \\
		With recurrent correction & 469 & $1.00\times10^{0}$ & $9.59\times10^{-4}$ & $1.08\times10^{-3}$ & 0.9981 & 23.24 \\
		\bottomrule
	\end{tabular}
\end{table}

\begin{table}[pos=htbp]
	\centering
	\caption{Random-seed robustness results for the proposed method on the central low-permeability benchmark.}
	\label{tab:app_ablation_seed}
	\footnotesize
	\setlength{\tabcolsep}{3.2pt}
	\begin{tabular}{ccccccc}
		\toprule
		Seed & Dimension & $E_p^{L^2}$ & $E_p^{L^\infty}$ & $E_{\rm band}$ & $\rho_{\Delta t}$ & Runtime (s) \\
		\midrule
		2022 & 469 & $9.64\times10^{-4}$ & $1.35\times10^{-3}$ & $1.01\times10^{-3}$ & 0.9981 & 22.04 \\
		2023 & 469 & $9.53\times10^{-4}$ & $1.10\times10^{-3}$ & $9.80\times10^{-4}$ & 0.9981 & 20.10 \\
		2024 & 469 & $9.62\times10^{-4}$ & $1.24\times10^{-3}$ & $1.01\times10^{-3}$ & 0.9981 & 14.70 \\
		2025 & 469 & $9.69\times10^{-4}$ & $1.47\times10^{-3}$ & $1.04\times10^{-3}$ & 0.9981 & 15.53 \\
		2026 & 469 & $9.54\times10^{-4}$ & $1.08\times10^{-3}$ & $9.78\times10^{-4}$ & 0.9981 & 15.23 \\
		\bottomrule
	\end{tabular}
\end{table}

\begin{table}[pos=htbp]
	\centering
	\caption{Feature-space sensitivity results on the central low-permeability benchmark. The column ``RF size'' denotes the original random-feature dictionary size before pressure-space compression.}
	\label{tab:app_ablation_feature}
	\footnotesize
	\setlength{\tabcolsep}{3.2pt}
	\begin{tabular}{lcccccc}
		\toprule
		Setting & RF size & Dimension & $E_p^{L^2}$ & $E_p^{L^\infty}$ & $\rho_{\Delta t}$ & Runtime (s) \\
		\midrule
		Small RF space & 400 & 409 & $9.67\times10^{-4}$ & $1.34\times10^{-3}$ & 0.9981 & 13.59 \\
		Default RF space & 500 & 469 & $9.54\times10^{-4}$ & $1.08\times10^{-3}$ & 0.9981 & 15.80 \\
		Rich RF space & 600 & 528 & $9.52\times10^{-4}$ & $1.11\times10^{-3}$ & 0.9981 & 16.93 \\
		\bottomrule
	\end{tabular}
\end{table}

\begin{table}[pos=htbp]
	\centering
	\caption{Permeability-contrast robustness results obtained by varying the inclusion permeability $A_{\rm in}$.}
	\label{tab:app_ablation_contrast}
	\footnotesize
	\setlength{\tabcolsep}{3.2pt}
	\begin{tabular}{ccccccc}
		\toprule
		$A_{\rm in}$ & Dimension & $E_p^{L^2}$ & $E_p^{L^\infty}$ & $E_{\rm band}$ & $\rho_{\Delta t}$ & Runtime (s) \\
		\midrule
		0.30 & 469 & $9.65\times10^{-4}$ & $1.13\times10^{-3}$ & $9.90\times10^{-4}$ & 0.9981 & 14.64 \\
		0.20 & 469 & $9.54\times10^{-4}$ & $1.08\times10^{-3}$ & $9.78\times10^{-4}$ & 0.9981 & 14.01 \\
		0.10 & 469 & $9.20\times10^{-4}$ & $8.91\times10^{-4}$ & $9.22\times10^{-4}$ & 0.9981 & 13.81 \\
		0.05 & 469 & $8.93\times10^{-4}$ & $7.10\times10^{-4}$ & $8.70\times10^{-4}$ & 0.9984 & 15.74 \\
		0.01 & 469 & $8.37\times10^{-4}$ & $5.44\times10^{-4}$ & $6.73\times10^{-4}$ & 0.9995 & 14.10 \\
		\bottomrule
	\end{tabular}
\end{table}

\begin{table}[pos=htbp]
	\centering
	\caption{Grid and time-step robustness results on the central low-permeability benchmark.}
	\label{tab:app_ablation_grid_time}
	\footnotesize
	\setlength{\tabcolsep}{3.2pt}
	\begin{tabular}{ccccccc}
		\toprule
		$N$ & $\Delta t$ & Dimension & $E_p^{L^2}$ & $E_p^{L^\infty}$ & $\rho_{\Delta t}$ & Runtime (s) \\
		\midrule
		33 & $2.0\times10^{-4}$ & 349 & $1.83\times10^{-3}$ & $1.59\times10^{-3}$ & 0.9962 & 1.71 \\
		49 & $1.5\times10^{-4}$ & 381 & $1.42\times10^{-3}$ & $1.44\times10^{-3}$ & 0.9971 & 4.77 \\
		65 & $1.0\times10^{-4}$ & 469 & $9.54\times10^{-4}$ & $1.08\times10^{-3}$ & 0.9981 & 13.84 \\
		\bottomrule
	\end{tabular}
\end{table}

\section{PINN and XPINN Baseline Hyperparameters}
\label{app:pinn_xpinn_hyperparameters}

\begin{figure}[pos=htbp]
	\centering
	\includegraphics[width=0.9\linewidth]{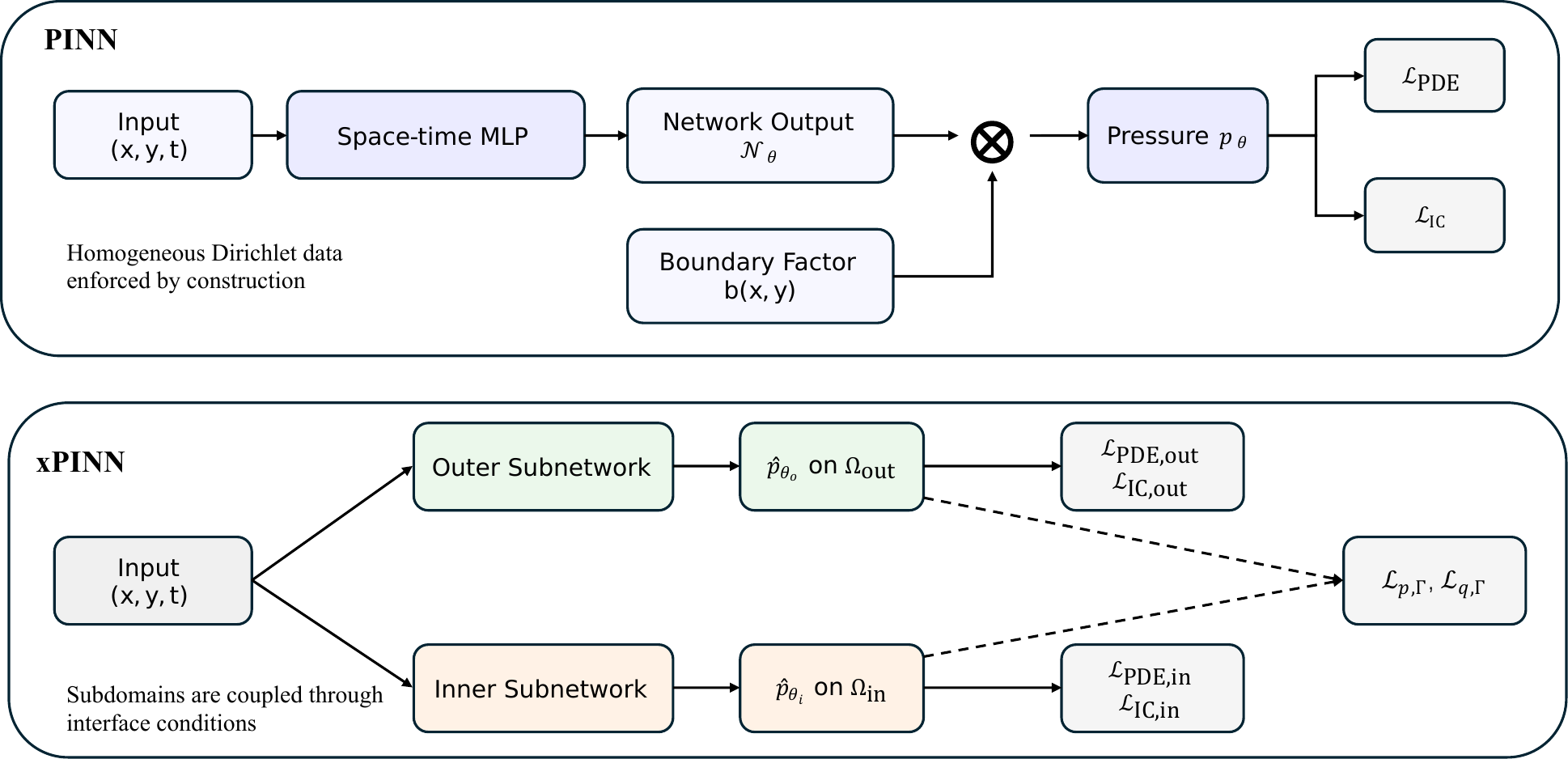}
	\caption{Schematic implementation of the PINN and XPINN baselines. The PINN baseline uses a single hard-boundary space-time network. The XPINN baseline uses separate subnetworks for the matrix and inclusion regions, with additional pressure-continuity and normal-flux-continuity penalties on the material boundary.}
	\label{fig:app_pinn_xpinn_schematic}
\end{figure}

This appendix reports the implementation details of the PINN and XPINN baselines used in Section~\ref{sec:central_low_perm}. Both baselines were evaluated on the central low-permeability benchmark with the same grid, final time, and time step as the finite-volume reference problem. The pressure was represented by a space-time multilayer perceptron with input $(x,y,t)$ and scalar output. Homogeneous Dirichlet boundary conditions were imposed through the hard-boundary factor
\begin{equation}
	b(x,y)=x(1-x)y(1-y),
\end{equation}
so that the predicted pressure was written as $p_\theta(x,y,t)=b(x,y)N_\theta(x,y,t)$. Therefore, no separate boundary-condition loss was used for these two baselines.

\begin{table}[pos=htbp]
	\centering
	\setlength{\tabcolsep}{5pt} 
	\small
	\caption{Common training settings for the PINN and XPINN baselines on the central low-permeability benchmark.}
	\label{tab:app_pinn_xpinn_common}
	\begin{tabular}{lcc}
		\toprule
		Setting & PINN & XPINN \\
		\midrule
		Grid & $65\times65$ & $65\times65$ \\
		Final time $T$ & $0.05$ & $0.05$ \\
		Reference time step & $10^{-4}$ & $10^{-4}$ \\
		Network input & $(x,y,t)$ & $(x,y,t)$ \\
		Hidden layers & 3 & 3 per subnet \\
		Layer width & 48 & 48 per subnet \\
		Activation & $\tanh$ & $\tanh$ \\
		Trainable params & 4945 & 9890 \\
		Optimizer & Adam & Adam \\
		Adam iterations & 4000 & 4000 \\
		Learning rate & $10^{-3}$ & $10^{-3}$ \\
		Precision & torch.float32 & torch.float32 \\
		Device & CUDA & CUDA \\
		Random seed & 42 & 43 \\
		Weight init. & Xavier normal, zero bias & Xavier normal, zero bias \\
		\bottomrule
	\end{tabular}
\end{table}

For the PINN baseline, the loss function was
\begin{equation}
	\mathcal{L}_{\mathrm{PINN}}
	=
	\lambda_{\mathrm{pde}}\mathcal{L}_{\mathrm{pde}}
	+
	\lambda_{\mathrm{ic}}\mathcal{L}_{\mathrm{ic}},
\end{equation}
where $\mathcal{L}_{\mathrm{pde}}$ was computed from the residual $p_t-A(p_{xx}+p_{yy})$ and $\mathcal{L}_{\mathrm{ic}}$ enforced $p(x,y,0)=\sin(\pi x)\sin(\pi y)$. The permeability was evaluated as the piecewise constant coefficient of the central-inclusion benchmark.

For the XPINN baseline, two subnetworks were used for the outer matrix region and the central inclusion. The outer residual used $A=1$, and the inner residual used $A=0.2$. The loss function additionally included pressure-continuity and normal-flux-continuity terms on the material boundary,
\begin{equation}
	\mathcal{L}_{\mathrm{XPINN}}
	=
	\lambda_{\mathrm{pde,out}}\mathcal{L}_{\mathrm{pde,out}}
	+
	\lambda_{\mathrm{pde,in}}\mathcal{L}_{\mathrm{pde,in}}
	+
	\lambda_{\mathrm{ic}}\left(
	\mathcal{L}_{\mathrm{ic,out}}+\mathcal{L}_{\mathrm{ic,in}}
	\right)
	+
	\lambda_p\mathcal{L}_{p,\Gamma}
	+
	\lambda_q\mathcal{L}_{q,\Gamma}.
\end{equation}
Here $\mathcal{L}_{p,\Gamma}$ penalizes the pressure jump, and $\mathcal{L}_{q,\Gamma}$ penalizes the jump in the normal flux $A\nabla p\cdot n$.

\begin{table}[pos=htbp]
	\centering
	\caption{Sampling and loss weights for the PINN and XPINN baselines. Collocation points were randomly resampled during Adam training.}
	\label{tab:app_pinn_xpinn_loss}
	\small
	\begin{tabular}{lcc}
		\toprule
		Setting & PINN & XPINN \\
		\midrule
		PDE residual points & 2048 & 1536 outer, 512 inner \\
		Initial-condition points & 2048 & 1536 outer, 512 inner \\
		Interface points & -- & 512 \\
		$\lambda_{\mathrm{pde}}$ & 1.0 & -- \\
		$\lambda_{\mathrm{pde,out}}$ & -- & 1.0 \\
		$\lambda_{\mathrm{pde,in}}$ & -- & 1.0 \\
		$\lambda_{\mathrm{ic}}$ & 10.0 & 10.0 \\
		$\lambda_p$ & -- & 50.0 \\
		$\lambda_q$ & -- & 1.0 \\
		Training-log interval & 200 steps & 200 steps \\
		\bottomrule
	\end{tabular}
\end{table}

\section*{Acknowledgment}
The research of Peiqi Li was funded by the Postgraduate Research Scholarship of
Xi'an Jiaotong-Liverpool University (FOSA2412003).

\printcredits

\section*{Declaration of competing interest}
The authors declare that they have no known competing financial interests or
personal relationships that could have appeared to influence the work reported
in this paper.

\bibliographystyle{cas-model2-names}
\bibliography{cas-refs}

\end{document}